\newtheorem{theorem}{Theorem}[section]
\newtheorem{lemma}[theorem]{Lemma}
\newtheorem{corollary}[theorem]{Corollary}
\newtheorem{proposition}[theorem]{Proposition}
\theoremstyle{definition}
\newtheorem{definition}[theorem]{Definition}
\newtheorem{scholium}[theorem]{Scholium}
\theoremstyle{remark}
\newtheorem{remark}[theorem]{Remark}
\numberwithin{equation}{section}
\newcommand{\inv}{{^{-1}}}
\newcommand{\be}{\begin{equation}}
\newcommand{\ee}{\end{equation}}
\newcommand{\nc}{\newcommand}
\nc{\mc}{\mathcal}
\nc{\mf}{\mathfrak}
\nc{\on}{\operatorname}
\nc{\C}{{\mathbb C}}
\nc{\R}{{\mathbb R}}
\nc{\Z}{{\mathbb Z}}
\nc{\N}{{\mathbb N}}
\nc{\F}{{\mathbb F}}
\nc{\bbA}{{\mathbb A}}
\nc{\bbU}{{\mathbb U}}
\nc{\CA}{{\mathcal A}}
\nc{\CE}{{\mathcal E}}
\nc{\CP}{{\mathcal P}}
\nc{\CO}{{\mathcal O}}
\nc{\CK}{{\mathcal K}}
\nc{\CT}{{\mathcal T}}
\nc{\Ann}{{\rm{Ann}}}
\nc{\Rad}{{\rm{Rad}}}
\nc{\Res}{{\rm{Res}}}
\nc{\Ind}{{\rm{Ind}}}
\nc{\Ker}{{\rm{Ker}}}
\nc{\id}{{\rm{id}}}
\nc{\fg}{\mf g}
\nc{\fh}{\mf h}
\nc{\fb}{\mf b}
\nc{\fn}{\mf n}
\nc{\fk}{{\mathfrak k}}
\nc{\fl}{{\mathfrak l}}
\nc{\fp}{{\mathfrak p}}
\nc{\fu}{{\mathfrak u}}
\nc{\cB}{\mc B}
\nc{\cC}{\mc C}
\nc{\cD}{\mc D}
\nc{\cE}{\mc E}
\nc{\cH}{\mc H}
\nc{\cI}{\mc I}
\nc{\cR}{\mc R}
\nc{\cS}{\mc S}
\nc{\cT}{\mc T}
\nc{\cV}{\mc V}
\nc{\half}{\frac{1}{2}}
\nc{\dt}{\mathord{\hbox{${\frac{d}{d t}}$}}}
\newcommand{\End}{{\rm{End}}}
\newcommand{\Hom}{{\rm{Hom}}}
\newcommand{\Uq}{{{\rm U}_q}}
\newcommand{\GL}{{\rm{GL}}}
\newcommand{\Sym}{{\rm{Sym}}}
\newcommand{\Tr}{{\rm{Tr}}}
\newcommand{\rank}{{\rm{rank}}}
\newcommand{\cp}{{\rm{char(K)}}}
\nc{\fsl}{{\mathfrak {sl}}}
\nc{\fsp}{{\mathfrak {sp}}}
\nc{\fso}{{\mathfrak {so}}}
\nc{\fgl}{{\mathfrak {gl}}}
\nc{\fo}{{\mathfrak {o}}}
\nc{\Sp}{{\rm Sp}}
\nc{\Or}{{\rm O}}
\nc{\BMW}{{\rm BMW}}
\nc{\ot}{\otimes}
\nc{\ve}{\varepsilon}
\newcommand{\ep}{{\epsilon}}
\nc{\lr}{\longrightarrow}
\nc{\Sdim}{{\rm Sdim}}
\begin{document}

\normalfont

\title[Brauer category and invariant theory]{The Brauer category
and invariant theory.}

\author{G.I. Lehrer and R.B. Zhang}
\thanks{This research was supported by the Australian Research Council}
\address{School of Mathematics and Statistics,
University of Sydney, N.S.W. 2006, Australia}
\email{gustav.lehrer@sydney.edu.au, ruibin.zhang@sydney.edu.au}
\date {24 July, 2012}

\begin{abstract}
A category of Brauer diagrams, analogous to Turaev\rq{}s tangle category,
is introduced, and a presentation of the category
is given; specifically, we prove that seven relations among its four generating
homomorphisms suffice to deduce all equations among the
morphisms. Full tensor functors are constructed from this category to
the category of tensor representations of the orthogonal group $\Or(V)$ or the symplectic group $\Sp(V)$
over any field of characteristic zero. The first and second fundamental theorems of invariant theory
for these classical groups are generalised to the category theoretic setting.
The major outcome is that we obtain new presentations for the endomorphism algebras of the module
$V^{\otimes r}$. These are obtained by appending
to the standard presentation of the Brauer algebra of degree $r$ one additional relation.
This relation stipulates the vanishing of an element of the Brauer algebra which is quasi-idempotent,
and which we describe explicitly both in terms of diagrams and algebraically.
In the symplectic case, if $\dim V=2n$,
the element is precisely the central idempotent in the Brauer
subalgebra of degree $n+1$, which corresponds to its trivial representation. Since this is
the Brauer algebra of highest degree which is semisimple, our generator is an exact
analogue for the Brauer algebra of the Jones idempotent of the Temperley-Lieb algebra.
In the orthogonal case the additional relation is also a quasi-idempotent in the integral Brauer algebra.
Both integral and quantum analogues of these results are given, the latter of which involve the BMW algebras.
\end{abstract}
\maketitle

\tableofcontents

\section{Introduction}
The fundamental theorems \cite{W} of classical invariant theory are concerned with generators
and relations for invariants of classical group actions, and can be formulated in different ways \cite{GW}.
A linear formulation \cite{W} of the first and second fundamental theorems
describes a spanning set of the vector space of invariant linear functionals on tensor modules,
and all the linear relations among the elements of this set.
There is also a commutative algebraic formulation which describes the invariants of
classical group actions on the coordinate ring of an appropriate module \cite{GW}.
The fundamental theorems in this case give a presentation of the algebra of invariant functions
as a commutative algebra.
The two formulations are equivalent.

Another formulation, which is more frequently encountered in representation theory,
is in terms of the (non-commutative) endomorphism algebras of tensor modules.
The first fundamental theorem (FFT) in this formulation \cite{GW}
describes the endomorphism algebra as the homomorphic image of
some known algebra, which is the group algebra of the symmetric group in
the case of the general linear group following Schur,
and the Brauer algebra \cite{Br} with appropriate parameters in the case of the orthogonal or symplectic group
by work of Brauer.
However, except in type $A$ ($\GL_n$) there does not seem to exist a standard form of the
second fundamental theorem (SFT) in this formulation.
A reasonable expectation is that the SFT should provide convenient presentations for
these endomorphism algebras,
which cannot be deduced from the other two formulations of the SFT in any easy way,
except in the case of the general linear group. Since there is a large (non-commutative)
algebra of endomorphisms, one might expect that there should be
only a small number of relations necessary to generate the ideal of all relations, other
than the ``Brauer relations''. This does indeed turn out to be the case, with a single
explicitly described idempotent generating all additional relations as an ideal of the Brauer algebra.

These results permit an analysis of
the generic quantum case, which we present, and should lead to results for
quantum groups at roots of unity, and for
 the case where the base field has positive characteristic.
 This is because our generating elements in both cases are sums of
 diagrams with coefficients $\pm 1$.

In \cite{LZ2, LZ3}, the orthogonal group $\Or(V)$ over $\C$ with $\dim V=3$ was investigated
(together with its quantum analogue at generic $q$).
We obtained a single idempotent $E$ in the Brauer algebra of degree $r\ge 4$, which generates
a two-sided ideal that is equal to the kernel of the algebra homomorphism from the Brauer algebra
to the endomorphism algebra $\End_{\Or(V)}(V^{\ot r})$ (the kernel is trivial if $r<4$). Thus
$\End_{\Or(V)}(V^{\ot r})$ can be presented in terms of the standard generators and relations
of the Brauer algebra with the single additional relation $E=0$.

Remarkably, the situation has turned out to be the same
for all the orthogonal groups \cite{LZ4} and symplectic groups \cite{HX}
over any field $K$ of characteristic zero.
The methods used in the papers \cite{LZ2, LZ3, LZ4} and \cite{HX} are quite different.
In \cite{LZ2, LZ3}, we analysed the radical of the Brauer algebra to prove our result,
making extensive use of the theory of cellular algebras \cite{GL96, GL03, GL04}.
The paper \cite{HX} relied on results on the detailed structure and representations
 \cite{DHW, HW, RS, X} of the Brauer algebra and BMW algebra \cite{BW}.
In particular, it made essential use of a series of earlier papers of Hu and collaborators.
In contrast, invariant theory featured much more prominently in \cite{LZ4}.

In the present paper we give a unified treatment of the SFTs for all the orthogonal
and symplectic groups in the endomorphism algebra formulation,  following
a categorical approach inspired by works on quantum invariants of links \cite{J, T1, R, RT,
ZGB}.

Recall that a key algebraic result in quantum topology is that the category of
tangles is a strict monoidal category with braiding \cite{FY1, FY2, T1} (also see \cite{RT, T2})
in the sense of Joyal and Street \cite{JS}.
The set of objects of this category is $\N=\{0, 1, 2, \dots\}$, and the vector spaces
of morphisms have bases consisting of non-isotopic tangle diagrams.
We define a similar, but much simpler category $\cB(\delta)$,
the {\em category of Brauer diagrams}
with parameter $\delta \in K$.  The space of morphisms of $\cB(\delta)$ is spanned by
{\em Brauer diagrams} (see Definition \ref{def:brauer-diag}),
which include the usual Brauer diagrams \cite{Br} as a special case, as endomorphisms
of an object of the category.

Let $G$ be either the orthogonal group $\Or(V)$ or the symplectic group $\Sp(V)$,
and denote by $\cT_G(V)$ the full subcategory of the category
of finite dimensional $G$-representations
with objects $V^{\ot r}$ ($r\in \N$).
There exists an additive functor
$F: \cB(\epsilon m)\longrightarrow \cT_G(V)$, which is given by Theorem \ref{thm:functor}.
Here $\epsilon m=\epsilon(G)\dim V$
with $\epsilon(G)=1$ for $\Or(V)$ and $-1$ for $\Sp(V)$.
The functor $F$ is shown to be full in Theorem \ref{thm:fft-sft}(1).
This significantly generalises the FFTs for the orthogonal and symplectic groups.
Both the linear and endomorphism algebra versions of FFT are now special
cases of Theorem \ref{thm:fft-sft}(1),
and their equivalence becomes entirely transparent.

For each pair of objects $r, s$ in the category $\cB(\epsilon m)$,
the functor $F$ induces a linear map
$F_r^s: \Hom_{\cB(\epsilon m)}(r, s)\longrightarrow\Hom_{\cT_G(V)}(V^{\ot r}, V^{\ot s})$.
A simple description of the subspace $\Ker{F}_r^s$ is obtained in Theorem \ref{thm:fft-sft}(2),
which contains the linear version of SFT as a special case.

When $s=r$, the domain of $F_r^r$ is the Brauer algebra of degree $r$,  the range
is the endomorphism algebra $\End_{\cT_G(V)}(V^{\ot r})$,
and the map  is an algebra homomorphism. In this case,
we want to understand the algebraic structure
of the kernel of the map $F_r^r$.

We explicitly construct an element
in the Brauer algebra which generates $\Ker{F}_r^r$ as a two-sided ideal
($\Ker{F}_r^r\ne 0$ only when $r>d$, see Theorem \ref{thm:sft}).
The result for the symplectic group is given in Theorem \ref{thm:sp-main},
and that for the orthogonal group in Theorem \ref{thm:o-main}.
This leads to a presentation of $\End_{\cT_G(V)}(V^{\ot r})$
upon imposing the condition that this element vanishes.
In the case of $\Or(V)$, the generating element we obtain is shown to be equal to
that obtained in \cite{LZ4}.
In the symplectic case, the element of \cite{HX} is a scalar multiple of the one obtained here
(Remark \ref{rem:uniqueness-Phi}). However our approach yields an explicit formula for the
element, both in terms of generators and relations, and in terms of diagrams; moreover we show that
the element is (a multiple of) the central idempotent corresponding to the trivial
representation of the Brauer algebra on $n+1$ strings, if $r=2n$. We note that $B_r(-2n)$ is semisimple
if and only if $r\leq n+1$ (see \S \ref{s:ss} below). Thus our generating element is an exact analogue
of Jones\rq{} `augmentation\rq{} idempotent \cite{J, GL98}.

We remark that notwithstanding the fact that convenient formulae for our generating elements
involve rational numbers with large denominators, the elements are actually sums of diagrams with coefficients
$\pm 1$. This permits reduction modulo primes, and an approach to the case of positive characteristic
(\S \ref{s:ss}).

The category of Brauer tangle diagrams
provides an appropriate framework for uniformly treating
the SFTs of the orthogonal and symplectic groups
in the endomorphism algebra formulation because to move between
the linear, commutative algebraic and endomorphism algebra formulations,
we need to consider arbitrary Brauer diagrams, not only those in the Brauer
algebras.

The categorical framework is also the most natural
setting for studying the invariant theory of quantum groups \cite{D, L}.
In Section \ref{sect:quantum} we present some generalisations of our results
to the quantum case, where we show that similar results hold, with the
Brauer algebras replaced by the Birman-Murakami-Wenzl algebras.

%\newpage

%%
%%
\section{The category of Brauer diagrams}
We begin with a discussion on Brauer diagrams, which could be thought of as a highly
simplified version of the tangle diagrams of \cite{FY1, FY2, T1} (also see \cite{RT, T2}).
Tangles in this paper are neither oriented nor framed. In fact we shall find it easier to
work with the (equivalent) category of Brauer diagrams, with no reference to tangles.

\subsection{The category of Brauer diagrams}\label{subsect:tangles}

Let $\N=\{0, 1, 2, \dots\}$.
\begin{definition}\label{def:brauer-diag}
For any pair $k, \ell \in\N$,
a $(k, \ell)$ (Brauer) diagram, (or Brauer diagram from $k$ to $\ell$)
is a partitioning of the set $\{1,2,\dots,k+\ell\}$ as a disjoint union of pairs.
\end{definition}
This is thought of as a diagram where $k+\ell$ points (the nodes, or vertices) are placed on
two parallel horizontal lines,
$k$ on the lower line and $\ell$ on the upper, with
arcs drawn to join points which are paired. We shall speak of the lower and upper nodes or vertices
of a diagram. The pairs will be known as {\em arcs}. If $k=\ell=0$, there is by convention just one
Brauer $(0,0)$-diagram.

Figure \ref{fig1} below is a $(6, 4)$ Brauer diagram.

\begin{figure}[h]
\begin{center}
\begin{picture}(160, 60)(0,0)

\qbezier(0, 60)(30, 0)(60, 60)
\qbezier(0, 0)(60, 60)(90, 0)

\qbezier(30, 60)(90, 40)(150, 0)
\qbezier(90, 60)(60, 30)(30, 0)

\qbezier(60, 0)(90, 40)(120, 0)
\end{picture}
\end{center}
\caption{ }
\label{fig1}
\end{figure}

\begin{remark}\label{rem:tangles}
Such a diagram may be thought of as the image of a tangle diagram (i.e. ambient isotopy class of
$(k,\ell)$ tangles) under projection to a plane. It is straightforward that if overcrossings
and undercrossings are identified in a tangle projection, the only invariants of a tangle
are the number of free loops and the set of pairs of boundary points, each of which is
the boundary of a connected component of the tangle. Hence the identification with Brauer
diagrams. We shall therefore not use tangles explicitly.
\end{remark}

There are two operations on Brauer diagrams: {\em composition} defined using
concatenation of diagrams and {\em tensor product} defined using juxtaposition
(see below).

\begin{definition}\label{def:bkl} Let $K$ be a commutative ring with identity,
and fix $\delta\in K$.
Denote by ${B}_k^\ell(\delta)$ the free
$K$-module with a basis consisting of
$(k, \ell)$ Brauer diagrams. Note that
${B}_k^\ell(\delta)\neq 0$ if and only if $k+\ell$ is even,
since the free $K$-module with basis the empty set is zero.
By convention there is one diagram in $B_0^0(\delta)$, viz. the empty diagram.
Thus $B_0^0(\delta)=K$.
\end{definition}

There are two $K$-bilinear operations on diagrams.
\begin{eqnarray}\label{eq:products}
\begin{aligned}
&&\text{composition} & \quad \circ:  & & B_\ell^p(\delta)
\times B_k^\ell(\delta)\longrightarrow B_k^p(\delta), and  \\
&&\text{tensor product} & \quad  \otimes: & & B_p^q(\delta)
\times B_k^\ell(\delta)\longrightarrow B_{k+p}^{q+\ell}(\delta)
\end{aligned}
\end{eqnarray}

These operations are defined as follows.
\begin{enumerate}
\item The composite $D_1\circ D_2$ of the Brauer diagrams $D_1\in B_\ell^p(\delta)$ and
$D_2\in B_k^\ell(\delta)$ is defined as follows.
First, the concatenation $D_1\#D_2$ is obtained by placing $D_1$ above $D_2$,
and identifying the $\ell$ lower nodes of $D_1$ with the corresponding upper nodes of $D_2$.
Then $D_1\#D_2$ is the union of a Brauer $(k,p)$ diagram $D$ with a certain number, $f(D_1,D_2)$ say,
of free loops. The composite $D_1\circ D_2$ is the element $\delta^{f(D_1,D_2)}D\in B_k^p(\delta)$.

\item The tensor product $D\otimes D'$ of
any two Brauer diagrams $D\in B_p^q(\delta)$
and $D'\in B_k^l(\delta)$ is the $(p+k, q+l)$
diagram obtained by juxtaposition, that is,
placing $D'$ on the right of $D$ without overlapping.
\end{enumerate}
Both operations are clearly associative.

\begin{definition}
The {\em category  of Brauer diagrams}, denoted by $\cB(\delta)$,
is the following pre-additive small category equipped with a bi-functor $\otimes$
(which will be called the tensor product):
\begin{enumerate}
\item the set of objects is $\N=\{0, 1, 2, \dots\}$,  and for any pair of objects $k, l$,
$\Hom_{\cB(\delta)}(k, l)$ is the $K$-module $B_k^l(\delta)$; the composition of morphisms is
given by the composition of Brauer diagrams defined by \eqref{eq:products};
\item the tensor product $k\otimes l$ of objects  $k, l$ is  $k+l$ in $\N$, and the
tensor product of morphisms is given by the tensor product of Brauer diagrams of \eqref{eq:products}.
\end{enumerate}
\end{definition}
It follows from the associativity of composition of Brauer diagrams that
$\cB(\delta)$ is indeed a pre-additive category.

\begin{remark}\label{rem:quotient-cat}
The operations in $\cB(\delta)$ mirror the operations in the tangle category
considered in \cite{FY1, FY2, T1, RT, T2} and
the {\em category of Brauer diagrams} is a quotient category
of the category of tangles in the sense of \cite[\S II.8]{MacL}.
\end{remark}

\subsection{Involutions}\label{ssec:involutions}
The category $\cB(\delta)$ has a {\em duality functor} $^*:\cB(\delta)\to \cB(\delta)^{\text{op}}$,
which takes each object to itself, and takes each diagram to its reflection in a horizontal line.
More formally, for any $(k,\ell)$ diagram $D$, $D^*$ is the $(\ell,k)$ diagram with precisely the same
pairs identified as $D$. Further, there is an involution $^\sharp:\cB(\delta)\to \cB(\delta)$
which also takes objects to themselves, but takes a diagram $D$ to its reflection in a vertical line.
Formally, if the upper nodes of the diagram $D$ are labelled $1,2.\dots,\ell$ and the lower
nodes are labelled $1',2',\dots,k'$, we apply the permutation $i\mapsto \ell+1-i,j'\mapsto k+1-j'$
to the nodes to get the arcs of $D^\sharp$. We shall meet the contravariant functor
$D\mapsto *D:=D^{*\circ\sharp}$ later.

It is easily checked that $(D_1\circ D_2)^*=D_2^*\circ D_1^*$, $(D_1\ot D_2)^*=D_1^*\ot D_2^*$,
and that
$(D_1\circ D_2)^\sharp=D_1^\sharp\circ D_2^\sharp$ and $(D_1\ot D_2)^\sharp=D_2^\sharp\ot D_1^\sharp$.

\subsection{Generators and relations}
Generators and relations for tangle diagrams were described in
\cite{FY1, FY2, T1, RT, T2}. The corresponding result for Brauer diagrams
is the main result of this section.

\begin{theorem}\label{thm:presentation}
\begin{enumerate}
\item\label{generators}
The four Brauer diagrams
\begin{center}
\begin{picture}(205, 40)(-5,0)
\put(0, 0){\line(0, 1){40}}
\put(5, 0){,}

\put(40, 0){\line(1, 2){20}}
\put(60, 0){\line(-1, 2){20}}
\put(65, 0){,}

\qbezier(100, 0)(115, 60)(130, 0)
\put(135, 0){,}

\qbezier(170, 30)(185, -30)(200, 30)
\put(200, 0){,}
\end{picture}
\end{center}
generate all Brauer diagrams by composition and tensor product (i.e., juxtaposition).
We shall refer to these generators as the elementary Brauer diagrams,
and denote them by $I$, $X$, $A$ and $U$
respectively. Note that
these diagrams are all fixed by $^\sharp$, and that $^*$ fixes $I$ and $X$, while
$A^*=U$ and $U^*=A$.
\item \label{relations} A complete set of relations among
these four generators is given by the following, and their transforms under $^*$ and $^\sharp$. This means that any
equation relating two words in these four generators can be deduced from the given relations.
\begin{eqnarray}
&I\circ I= I,\:(I\ot I)\circ X= X, \;(I\ot I)\circ A=A,\;(I\ot I)\circ U=U, \label{eq:identity}\\
&X\circ X= I,\label{eq:XX} \\
&(X\ot I)\circ(I\ot X)\circ(X\ot I) = (I\ot X)\circ(X\ot I)\circ(I\ot X),
\label{eq:braid}\\
%&(A\ot I)\circ(I\ot U)= I, \label{eq:AIIU}\\
%& (I\ot A)\circ (U\ot I)=I,\label{eq:IAUI}\\
&A\circ X = A,\label{eq:AX}\\
&A\circ U = \delta, \label{eq:AU}\\
&(A\ot I)\circ(I\ot X)=(I\ot A)\circ(X\ot I)\label{eq:slide}\\
&(A\ot I)\circ(I\ot U)=I.\label{eq:straight}
\end{eqnarray}
\end{enumerate}
The relations \eqref{eq:XX}-\eqref{eq:straight} are depicted diagrammatically in Figures \ref{fig:braid},
\ref{fig:loop} and \ref{fig:slide}.
\end{theorem}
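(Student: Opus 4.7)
For part (1) of the theorem, the plan is a Morse-style slicing argument. Given any $(k,\ell)$ Brauer diagram $D$, I would realise it as a planar picture between two horizontal lines and choose a generic height function so that each horizontal slice is either a tensor product of vertical strands (identities $I$) or contains exactly one critical feature: a crossing $X$, a local maximum $A$, or a local minimum $U$. Reading the slices from bottom to top expresses $D$ as an iterated composition of tensor products of these four elementary diagrams. Existence of such a realisation follows because a Brauer diagram is combinatorial data---a pairing of the $k+\ell$ boundary points---which can always be drawn in a ``stretchable'' planar position. Alternatively, one can induct on the number of horizontal arcs: use crossings to bring the two endpoints of a chosen top arc to adjacent positions, peel off the cap $A$ tensored with identities, and recurse on the smaller diagram.

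For part (2), the plan is to fix a canonical normal form and then show every word in the generators reduces to it using the listed relations. The normal form for a $(k,\ell)$-diagram $D$ with $a$ upper horizontal arcs, $b$ lower horizontal arcs, and $t := k-2b = \ell-2a$ through-strands is a word of layered shape
\[
w(D) \;=\; (\text{cap layer}) \circ (\text{braid layer}) \circ (\text{cup layer}),
\]
where the cup layer consists of $b$ copies of $U$ placed in specified positions on $k$ strands, the cap layer consists of $a$ copies of $A$ placed on $\ell$ strands, and the braid layer is a fixed reduced word in $X$'s and $I$'s for the permutation determined by the through-strand bijection of $D$. Each combinatorial Brauer diagram thus determines $w(D)$ uniquely.

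The bulk of the argument is to show that any word $w$ representing $D$ can be transformed into $w(D)$ using the listed relations. The steps would be: first, apply \eqref{eq:identity} to absorb trivial identity compositions. Second, use the slide relation \eqref{eq:slide} together with its $^*$ and $^\sharp$ transforms to percolate every $A$ upward and every $U$ downward past intervening crossings, cups, and caps, driving the word into cap-braid-cup layered shape. Third, inside the middle braid block, apply \eqref{eq:XX} and \eqref{eq:braid}, which are precisely the defining Coxeter relations of the symmetric group in the simple transpositions, to reduce to the prescribed reduced word. Fourth, use \eqref{eq:AX} with its transforms to normalise how crossings attach to caps and cups, use the straightening relation \eqref{eq:straight} and its transforms to eliminate any residual $A$--$U$ ``zig-zag'' kinks in through-strands, and finally use the loop relation \eqref{eq:AU} to replace each free closed loop produced by the process by the scalar $\delta$.

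The main obstacle I expect is the percolation step: applying \eqref{eq:slide} may introduce new crossings elsewhere, so termination of the reduction is not immediate. The remedy is to introduce a lexicographic complexity---for example, the ordered tuple consisting of the depth of the highest $U$ above the bottom, the depth of the lowest $A$ below the top, and the total number of crossings sandwiched between the cap and cup layers---and verify that each permitted rewrite under \eqref{eq:slide}, \eqref{eq:XX} and \eqref{eq:braid} either leaves this tuple unchanged or strictly decreases it, guaranteeing termination. Once the normal form is attained, uniqueness is combinatorial: the pairing of vertices read off from a normal-form word visibly coincides with the underlying pairing of the diagram it represents, so distinct normal forms yield distinct elements of $B_k^\ell(\delta)$, and the proof is complete.
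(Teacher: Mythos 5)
Your part (1) is the same Morse-slicing argument as the paper's, so there is nothing to add there. For part (2) you take a genuinely different route: a global normal form (cups/caps/braid layers) plus a terminating rewriting procedure, whereas the paper never fixes a normal form for general $(k,\ell)$ diagrams. Instead it first reduces \eqref{eq:task} to a single family of targets --- it shows (Lemmas \ref{lem:red-perm} and \ref{lem:red-raise}) that pre/post-composing with permutation diagrams and applying raising/lowering operators preserves the property ``all regular expressions are equivalent'', so one may assume $D=\delta^N U^{\ot r}$ --- and then runs an induction on $r$ and on word length, powered by one local computation (Lemma \ref{lem:stack}) describing how an elementary $A$ or $X$ interacts with a stack $\mf T_s$. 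The payoff of the paper's organisation is that the only normal forms one ever has to recognise are the obvious minimal words for $U^{\ot r}$, so no confluence analysis for arbitrary diagrams is needed; your approach, if completed, would yield the stronger byproduct of an explicit canonical word for every diagram.

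That said, as written your plan has two genuine gaps, and they are exactly where the work lies. First, the normal form is not yet well defined: a single layer of $A$'s (resp.\ $U$'s) tensored with identities can only pair \emph{adjacent} boundary points, so to realise an arbitrary pairing you must interleave permutation layers below, between and above the cap/cup layers (also note the layer from the $k$ boundary must consist of $A$'s, which decrease strand number, not $U$'s); and then uniqueness fails unless you choose coset representatives killing the $\Sym_b\ltimes(\Z/2\Z)^b$-ambiguity in how the caps are ordered and their feet swapped --- precisely the transversal issue the paper confronts in \S 4. Second, the termination claim is asserted but not verified, and your proposed measure does not obviously decrease: the delicate cases are not ``$A$ sliding past $X$'' (where \eqref{eq:slide} does strictly raise the $A$) but the collisions --- $A$ meeting $U$ at distance $0$, $1$, or $\ge 2$, $A$ meeting the top of a crossing-stack, $X$ meeting $X$ --- each of which needs its own reduction (this case analysis is the content of the paper's Lemma \ref{lem:stack}), and you must also check that no rewrite increases the components of your tuple that are ranked higher lexicographically. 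Until that case-by-case verification is supplied, the argument is a plausible programme rather than a proof.
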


\begin{figure}[h]
\begin{center}
\begin{picture}(100, 70)(90,-10)
\qbezier(0, 60)(50, 30)(0, 0)
\qbezier(30, 60)(-10, 30)(30, 0)

\put(40, 30){$=$}

\qbezier(60, 60)(60, 30)(60, 0)
\qbezier(90, 60)(90, 30)(90, 0)
\put(95, 0){;}

\put(10, -20){Double crossing}
%%%%

\qbezier(150, 60)(130, 30)(150, 0)
\qbezier(120, 60)(170, 40)(180, 0)
\qbezier(120, 0)(170, 20)(180, 60)

\put(190, 30){$=$}

\qbezier(240, 60)(260, 30)(240, 0)
\qbezier(270, 0)(220, 20)(210, 60)
\qbezier(270, 60)(220, 40)(210, 0)

\put(180, -20){Braid relation}
\end{picture}
\end{center}
\caption{Relations \eqref{eq:XX} and \eqref{eq:braid}}
\label{fig:braid}
\end{figure}

%%%%%%%%%%%%%%%%%%%%%%%%%%%%%%%%%%%%%%%%%%%%%%%%%

\begin{figure}[h]
\begin{center}
\begin{picture}(100, 80)(40,-10)
\qbezier(0, 40)(15, 90)(30, 40)
\qbezier(0, 40)(3, 15)(30, 0)
\qbezier(30, 40)(27, 15)(0, 0)
\put(40, 30){$=$}
\qbezier(60, 10)(75, 90)(90, 10)
\put(95, 0){;}

\put(30, -20){De-looping}
\qbezier(120, 33)(135, 90)(150, 33)
\qbezier(120, 33)(135, -20)(150, 33)
\put(160, 30){$=\delta$}

\put(115, -20){Loop Removal}

\end{picture}
\end{center}
\caption{Relations \eqref{eq:AX} and \eqref{eq:AU}}
\label{fig:loop}
\end{figure}
%%%
%%%

\begin{figure}[h]
\begin{center}
\begin{picture}(100, 80)(50,-10)
\qbezier(0, 0)(5, 120)(40,0)
\qbezier(20, 0)(30, 30)(40,60)
\put(45, 30){$=$}
\qbezier(60, 0)(95, 120)(100,0)
\qbezier(60, 60)(70, 30)(80,0)

\put(105, 0){;}

\put(30, -20){Sliding}
\qbezier(130, 0)(135, 90)(150, 30)
\qbezier(150, 30)(165, -30)(170, 60)
\put(180, 30){$=$}
\qbezier(200, 0)(200, 30)(200, 60)

\put(130, -20){Straightening}

\end{picture}
\end{center}
\caption{Relations \eqref{eq:slide} and \eqref{eq:straight}}
\label{fig:slide}
\end{figure}

\begin{proof} We first prove (1). The fact that the elementary Brauer diagrams $I,X$, $A$ and $U$
generate all Brauer diagrams under the operations of $\circ$ and $\ot$ may be seen as follows.
Fix the nodes of an arbitrary diagram $D$ from $k$ to $\ell$, and draw all the arcs as
piecewise smooth curves, in such a way
that there are at most two arcs through any point, and that no two crossings
or turning points have the same vertical coordinate. We may now draw a set of horizontal lines
(possibly after a small perturbation of the diagram) such that

(i) each line is not tangent to any of the arcs

(ii) between successive lines there is precisely one crossing or turning point.

Then the part of the diagram between successive lines may be thought of as the $\ot$-product
of the four generators, all except one being equal to $I$.
Thus we have exhibited $D$ as a word in the generators, of the form
$D=D_1\circ D_2\circ\dots\circ D_n$, where each $D_i$ is of the form
\be\label{eq:elem}
D_i=I^{\ot r}\ot Y\ot I^{\ot s},
\ee
with $Y$ being one of $A,U$ or $X$. Such an expression will be called
a {\em regular expression}, and the
factors $D_i$ {\em elementary diagrams}.
A product of elementary diagrams
in which $Y=X$ for each factor will be called a {\em permutation diagram}.
An example of a particular regular expressions is given in Figure \ref{fig:Ts}.

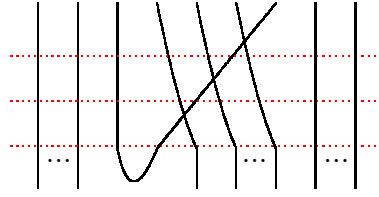
\begin{figure}[h]
\begin{center}
\begin{picture}(130, 70)(-10,0)
\qbezier(0, 70)(0, 30)(0, 0)
\put(3, 10){$...$}
%\put(3, 20){a-1}
\qbezier(15,70)(15, 30)(15, 0)

{\color{red}\qbezier[60](-10, 16)(56, 16)(130, 16)}

\qbezier(30, 70)(30, 30)(30, 15)
\qbezier(30, 15)(35, -10)(45, 15)
\qbezier(45, 15)(53, 25)(90, 70)

{\color{red}\qbezier[60](-10, 33)(56, 33)(130, 33)}

%\qbezier(90, 70)(90, 65)(90, 70)

%\qbezier(45, 70)(45, 75)(45, 70)
\qbezier(45, 70)(53, 30)(60, 15)
\qbezier(60, 0)(60, 10)(60, 15)

{\color{red}\qbezier[60](-10, 50)(56, 50)(130, 50)}

%\qbezier(60, 70)(60, 75)(60, 70)
\qbezier(60, 70)(68, 30)(75, 15)
\qbezier(75, 0)(75, 10)(75, 15)

%{\color{red}\qbezier[60](-10, 70)(56, 70)(130, 70)}

%\qbezier(75, 70)(75, 75)(75, 70)
\qbezier(75, 70)(83, 30)(90, 15)
\qbezier(90, 0)(90, 10)(90, 15)

\put(77, 10){$...$}

\qbezier(105, 70)(105, 30)(105, 0)
\put(108, 10){$...$}
\qbezier(120, 70)(120, 30)(120, 0)
%\put(95, 0){.}
\end{picture}
\end{center}
\caption{Regular expression}
\label{fig:Ts}
\end{figure}

This completes the proof of (1).

We now turn to the proof that the stated relations form a complete set.
Observe first that any expression for a diagram $D$ as a word in the generators
provides a regular expression for $D$ by repeated use of the relation
\eqref{eq:identity} and its dual. Accordingly we say that two regular expressions
$\mf{D},\mf{D}'$ are {\em equivalent}, and write $\mf D\sim \mf D'$ if
one can be obtained from the other by a sequence of applications of the relations in part (2)
of the Theorem. This is clearly an equivalence relation on regular expressions.

However, a word in the generators does not in general yield a Brauer diagram, but
rather a diagram multiplied by $\delta^k$ for some nonnegative integer $k$,
where $k$ is the number of deleted loops.
For any Brauer diagram $D$ and any $N\in\Z_+$, the above argument shows that
we can always represent $\delta^N D$
as a word in the generators, and hence also as a regular expression. We therefore
need to work with morphisms of the form
$\delta^N D$, where $D$ is a diagram. We refer to such a morphism
as  a {\em scaled Brauer diagram}, or simply a {\em scaled diagram}.
Every Brauer diagram is clearly a scaled diagram.

The discussion above shows that to prove the Theorem, it will suffice to show that
\be\label{eq:task}
\text{Any two regular expressions for a scaled diagram are equivalent.}
\ee

We shall extend the notion of equivalence to any expression of the form $D_1\circ\dots\circ D_n$,
where the $D_i$ are diagrams.

\begin{definition}\label{def:equiv}
The two compositions $D_1\circ\dots\circ D_n$ and $D_1'\circ\dots\circ D_m'$ are said to be equivalent
if one can be obtained from the other using only the relations
in Theorem \ref{thm:presentation} (2), and the properties of $\circ$ and $\ot$.
\end{definition}

To prove \eqref{eq:task} we require some analysis of regular expressions and equivalence.
We shall return to the proof after carrying this out.
\end{proof}

\begin{definition}\label{def:valency}
\begin{enumerate}
\item The {\em valency} of scaled diagram $D\in B_k^l$ is the pair $(k,l)$.
\item If $D=I^{\ot r}\ot Y\ot I^{\ot s}$ is elementary, the {\em abscissa}
$a(D)$ of $D$ is $r+1$, while the {\em type} $t(D)=Y$ $(=A,U$ or $X)$.
\item The {\em length} of a regular expression $E_1\circ\dots\circ E_n$ is $n$.
\end{enumerate}
\end{definition}

We shall repeatedly apply the following elementary observation, which we refer to
as the ``commutation principle''.
\begin{remark}\label{rem:commute}
\begin{enumerate}
\item Let $E_1,E_2$ be elementary diagrams such that $E_1\circ E_2$ makes sense.
If $|a(E_1)-a(E_2)|>1$ then $E_1\circ E_2\sim E_1'\circ E_2'$,
where $t(E_1')=t(E_2)$ and $t(E_2')=t(E_1)$.
\item If $D,D'$ are scaled diagrams of valency $(k,l)$ and $(k',l')$ respectively,
then $D\ot D'=(I^{\ot l}\ot D')\circ (D\ot I^{\ot k'})=(D\ot I^{\ot l'})\circ (I^{\ot k}\ot D')$.
\end{enumerate}
\end{remark}
Part (2) of the Remark states the obvious relations among diagrams depicted in Figure \ref{fig:commute}.
They follow from the fact that $(A\ot B)\circ (A'\ot B')\sim (A\circ A')\ot(B\circ B')$
for $A,A',B,B'$ of appropriate valency,
and the relation \eqref{eq:identity}.

%Note that these relations follow from \eqref{eq:identity}
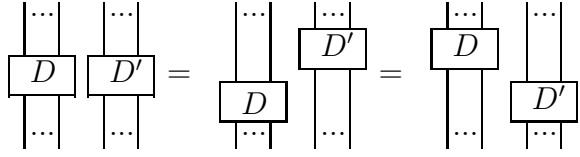
\begin{figure}[h]
\begin{center}
\begin{picture}(100, 60)(70,0)

\qbezier(16, 40)(16, 50)(16, 60)
\qbezier(29, 40)(29, 50)(29, 60)
\put(18, 55){$...$}

\qbezier(10, 40)(20, 40)(35, 40)
\qbezier(10, 25)(20, 25)(35, 25)
\put(18, 30){$D$}
\qbezier(10, 40)(10, 15)(10, 30)
\qbezier(35, 40)(35, 15)(35, 30)

\qbezier(16, 25)(16, 15)(16, 5)
\qbezier(29, 25)(29, 15)(29, 5)
\put(18, 10){$...$}

%%%%

\qbezier(46, 40)(46, 50)(46, 60)
\qbezier(59, 40)(59, 50)(59, 60)
\put(48, 55){$...$}

\qbezier(40, 40)(50, 40)(65, 40)
\qbezier(40, 25)(60, 25)(65, 25)
\put(48, 30){$D'$}
\qbezier(40, 40)(40, 15)(40, 30)
\qbezier(65, 40)(65, 15)(65, 30)

\qbezier(46, 25)(46, 15)(46, 5)
\qbezier(59, 25)(59, 15)(59, 5)
\put(48, 10){$...$}

\put(70, 30){$=$}

\qbezier(96, 30)(96, 50)(96, 60)
\qbezier(109, 30)(109, 50)(109, 60)
\put(98, 55){$...$}

\qbezier(90, 30)(110, 30)(115, 30)
\qbezier(90, 15)(110, 15)(115, 15)
\put(98, 18){$D$}
\qbezier(90, 30)(90, 15)(90, 15)
\qbezier(115, 30)(115, 15)(115, 15)

\qbezier(96, 15)(96, 10)(96, 5)
\qbezier(109, 15)(109, 10)(109, 5)
\put(98, 10){$...$}

%%%%

\qbezier(126, 50)(126, 50)(126, 60)
\qbezier(139, 50)(139, 50)(139, 60)
\put(128, 55){$...$}

\qbezier(120, 50)(130, 50)(145, 50)
\qbezier(120, 35)(140, 35)(145, 35)
\put(128, 40){$D'$}
\qbezier(120, 50)(120, 45)(120, 35)
\qbezier(145, 50)(145, 45)(145, 35)

\qbezier(126, 35)(126, 15)(126, 5)
\qbezier(139, 35)(139, 15)(139, 5)
\put(128, 10){$...$}

\put(150, 30){$=$}

\qbezier(176, 50)(176, 55)(176, 60)
\qbezier(189, 50)(189, 55)(189, 60)
\put(178, 55){$...$}

\qbezier(170, 50)(180, 50)(195, 50)
\qbezier(170, 35)(180, 35)(195, 35)
\put(178, 40){$D$}
\qbezier(170, 50)(170, 45)(170, 35)
\qbezier(195, 50)(195, 45)(195, 35)

\qbezier(176, 35)(176, 15)(176, 5)
\qbezier(189, 35)(189, 15)(189, 5)
\put(178, 10){$...$}

%%%%

\qbezier(206, 30)(206, 40)(206, 60)
\qbezier(219, 30)(219, 40)(219, 60)
\put(208, 55){$...$}

\qbezier(200, 30)(210, 30)(225, 30)
\qbezier(200, 15)(210, 15)(225, 15)
\put(208, 20){$D'$}
\qbezier(200, 30)(200, 20)(200, 15)
\qbezier(225, 30)(225, 20)(225, 15)

\qbezier(206, 15)(206, 10)(206, 5)
\qbezier(219, 15)(219, 10)(219, 5)
\put(208, 10){$...$}
\end{picture}
\end{center}
\caption{Commutativity}
\label{fig:commute}
\end{figure}

The next two results will be used in the reduction of the proof of Theorem \ref{thm:presentation}
(2) to a single case.

\begin{lemma}\label{lem:red-perm} Let $P,Q$ be permutation diagrams of valency
$(l,l)$ and $(k,k)$ respectively and let $D\in B_k^l$ be a scaled diagram.
If any two regular expressions for ${P}\circ{D}\circ{Q}$
are equivalent, then so are any two regular expressions for ${D}$.
\end{lemma}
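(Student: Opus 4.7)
The plan is to exploit the invertibility of permutation diagrams modulo equivalence: this is what allows us to transfer the hypothesis on $P\circ D\circ Q$ back to $D$ itself.

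The essential preliminary step is to prove that for every permutation diagram $P$ of valency $(l,l)$, its reflection $P^*$ serves as a two-sided inverse up to equivalence of regular expressions, in the sense that $P^*\circ P \sim I^{\ot l}$ and $P\circ P^* \sim I^{\ot l}$. Since permutation diagrams are built from the single generator $X$ (together with $I$) using $\circ$ and $\ot$, this is essentially the word problem for the symmetric group on $l$ letters, and I would establish it by induction on the length of a regular expression for $P^*\circ P$. The induction step uses the commutation principle of Remark \ref{rem:commute} and the braid relation \eqref{eq:braid} to move some pair $X\circ X$ into adjacency, and then the double-crossing relation \eqref{eq:XX} together with \eqref{eq:identity} to cancel it, strictly shortening the expression.

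Granted this, the lemma follows almost formally. Let $\mf E$ and $\mf E'$ be two regular expressions for $D$, and fix regular expressions $\mf P,\mf Q,\mf P^*, \mf Q^*$ for $P$, $Q$, $P^*$, and $Q^*$ respectively. Then $\mf P\circ \mf E\circ \mf Q$ and $\mf P\circ \mf E'\circ \mf Q$ are both regular expressions for the single scaled diagram $P\circ D\circ Q$, so by hypothesis they are equivalent. Since $\sim$ is a congruence with respect to $\circ$ (Definition \ref{def:equiv} permits substitution of equivalent sub-expressions), composing on the left with $\mf P^*$ and on the right with $\mf Q^*$ yields
\[
\mf P^*\circ \mf P\circ \mf E\circ \mf Q\circ \mf Q^* \;\sim\; \mf P^*\circ \mf P\circ \mf E'\circ \mf Q\circ \mf Q^*.
\]
By the preliminary step, $\mf P^*\circ \mf P\sim I^{\ot l}$ and $\mf Q\circ \mf Q^*\sim I^{\ot k}$; absorbing these identities using the tensor-factor extensions of \eqref{eq:identity} then gives $\mf E\sim \mf E'$, as required.

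The main obstacle is the preliminary step: carrying out the symmetric-group word reduction entirely at the level of regular expressions rather than at the level of abstract permutations. This requires a careful, though routine, use of Remark \ref{rem:commute} to commute non-adjacent elementary diagrams of type $X$ past each other, combined with the braid relation to force some pair into adjacency where \eqref{eq:XX} can be applied. Once this is in place---and it uses only the relations already listed in Theorem \ref{thm:presentation}(2)---the remainder of the argument is the formal manipulation displayed above.
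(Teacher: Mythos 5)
Your argument is correct and rests on the same mechanism as the paper's: relation \eqref{eq:XX} makes permutation diagrams invertible up to equivalence, so the hypothesis on $P\circ D\circ Q$ can be pulled back to $D$ by composing with an inverse and cancelling. The packaging differs. The paper never introduces $P^*$: it proves the statement for a single \emph{elementary} permutation diagram $P$ (which is its own inverse, so one simply composes with $P$ a second time and applies \eqref{eq:XX}), and then iterates over the elementary factors of a general $P$, and likewise for $Q$. You instead invert a general $P$ in one step via the preliminary claim $\mf P^*\circ\mf P\sim I^{\ot l}$. As you describe it --- reduce an arbitrary regular expression for the identity permutation to $I^{\ot l}$ by commutation, braid moves and \eqref{eq:XX} --- this amounts to the full word problem for $\Sym_l$ in its Coxeter presentation, which is true but strictly more than you need, and your sketch (``move some pair $X\circ X$ into adjacency'') glosses over the genuinely delicate point of that classical fact: it is not always possible to bring two occurrences of the same generator together by commutation and braid moves alone without an exchange-condition style analysis. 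The overkill is easily avoided: since you are free to choose the regular expression $\mf P^*$, take it to be the formal reversal $E_n\circ\dots\circ E_1$ of $\mf P=E_1\circ\dots\circ E_n$; then $\mf P^*\circ\mf P$ telescopes to $I^{\ot l}$ by repeated application of \eqref{eq:XX} and \eqref{eq:identity} alone. With that choice your preliminary step is immediate, and your proof becomes a one-shot version of the paper's factor-by-factor iteration; the rest of your formal manipulation (using that $\sim$ is a congruence for $\circ$, which Definition \ref{def:equiv} indeed grants) is exactly as in the paper.
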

\begin{proof} Let $\mf D$, $\mf D'$ be two regular expressions for $D$,
and suppose for the moment that $P$ is an elementary permutation diagram.
Then $P\circ\mathfrak{D}$ and $P\circ\mathfrak{D}'$ are regular expressions
for $P\circ D$, and hence are equivalent by hypothesis. Now
$P\circ P\circ\mathfrak{D}$ is a regular expression, and
it is evident that $P\circ P\circ\mathfrak{D}$ is equivalent
to $P\circ P\circ\mathfrak{D}'$. But from \eqref{eq:XX}, $P\circ P\circ\mathfrak{D}\sim \mf{D}$
and $P\circ P\circ\mathfrak{D}'\sim\mathfrak{D}'$, whence
$\mf D$ and $\mf D'$ are equivalent. This proves the Lemma for elementary $P$ and $Q=\id$.

Applying the above statement repeatedly, we see that for any permutation diagram $P$,
if any two regular expressions for $P\circ D$ are equivalent, the same is true for
$D$. A similar argument applies to prove the corresponding statement for $D\circ Q$,
for any permutation diagram $Q$.
\end{proof}

It follows that in proving \eqref{eq:task}, we may pre- and post-multiply
$D$ by arbitrary permutation diagrams, and replace $D$ by the resulting scaled diagram.

For the second reduction, we require the following definitions.
\begin{definition}\label{def:raising}
\begin{enumerate}
\item Define $R:B_k^l\to B_{k-1}^{l+1}$ (for $k\geq 1$) (the {\em raising operator}) by
$R(D)=(D\ot I)\circ (I^{\ot (k-1)}\ot U)$, and (the {\em lowering operator})
$L:B_k^l\to B_{k+1}^{l-1}$ by
$L(D)=(I^{\ot (l-1)}\ot A)\circ(D\ot I)$.
\item If $\mf D=D_1\circ D_2\circ\dots\circ D_n$ is a regular expression for the
scaled diagram $D\in B_k^l$, define the regular expression $R(\mf D)$ for $R(D)$ by
$R(\mf D)=(D_1\ot I)\circ (D_2\ot I)\circ\dots\circ (D_n\ot I)\circ (I^{\ot k-1}\ot U)$,
and similarly define the regular expression $L(\mf D)$ for $L(D)$.
Note that if $E$ is elementary, then so is $E\ot I$, so that the above definition makes sense.
\end{enumerate}
\end{definition}

\begin{lemma}\label{lem:red-raise}
\begin{enumerate}
\item For any regular expression $\mf D$ for a scaled diagram $D\in B_k^l$,
we have $R\circ L(\mf D)\sim \mf D$ and $L\circ R(\mf D)\sim\mf D$.
\item Suppose $D$ is a scaled diagram of valence $(k,l)$ with $k\geq 1$.
The regular expressions $\mf D,\mf D'$ for $D$ are equivalent if and only if
$L(\mf D)$ and $L(\mf D')$ (or $R(\mf D)$ and $R(\mf D')$) are equivalent.
\end{enumerate}
\end{lemma}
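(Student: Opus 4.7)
The plan is to establish part (1) by a slide-and-straighten argument, and then to deduce part (2) formally from it.

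For part (1), write $\mf D = D_1 \circ \cdots \circ D_n$ and let $v_0 = l, v_1, \dots, v_n = k$ be the intermediate valencies, so that $D_i : v_i \to v_{i-1}$. Unpacking the definitions gives
\[
R(L(\mf D)) = (I^{\ot (l-1)} \ot A \ot I) \circ (D_1 \ot I^{\ot 2}) \circ \cdots \circ (D_n \ot I^{\ot 2}) \circ (I^{\ot k} \ot U).
\]
The central step is to slide the trailing cup $(I^{\ot k} \ot U)$ upward past each factor $(D_i \ot I^{\ot 2})$ via the equivalence
\[
(D_i \ot I^{\ot 2}) \circ (I^{\ot v_i} \ot U) \;\sim\; (I^{\ot v_{i-1}} \ot U) \circ D_i.
\]
Both sides equal $D_i \ot U$ as diagrams, and the equivalence is an instance of the interchange law $(f \ot g) \circ (f' \ot g') \sim (f \circ f') \ot (g \circ g')$ which, together with the identity relations \eqref{eq:identity}, is available as a consequence of those relations (as noted in the discussion following Remark \ref{rem:commute}). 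Applying the slide for $i = n, n-1, \dots, 1$ in turn produces
\[
R(L(\mf D)) \;\sim\; (I^{\ot (l-1)} \ot A \ot I) \circ (I^{\ot l} \ot U) \circ \mf D.
\]
A further use of interchange rewrites the first two factors as $I^{\ot (l-1)} \ot \bigl((A \ot I) \circ (I \ot U)\bigr)$, which the straightening relation \eqref{eq:straight} collapses to $I^{\ot l}$. A factor-by-factor application of \eqref{eq:identity} then yields $I^{\ot l} \circ \mf D \sim \mf D$, completing the argument. The assertion $L \circ R(\mf D) \sim \mf D$ follows by the mirror argument, with \eqref{eq:straight} replaced by its $^\sharp$-transform $(I \ot A) \circ (U \ot I) = I$, which is among the permitted relations by the statement of Theorem \ref{thm:presentation}(2).

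For part (2), the forward direction is immediate: a single relation-rewrite $\mf D \leadsto \mf D''$ induces a corresponding rewrite of $L(\mf D)$ into $L(\mf D'')$ (and likewise for $R$), since forming $L$ (respectively $R$) only tensors each elementary factor with $I$ and prepends (respectively appends) a fixed elementary factor, operations that preserve the applicability of each listed relation. For the converse, assume $L(\mf D) \sim L(\mf D')$; applying $R$ preserves the equivalence, and part (1) then gives $\mf D \sim R(L(\mf D)) \sim R(L(\mf D')) \sim \mf D'$. The statement for $R$ is handled symmetrically, using $L \circ R(\mf D) \sim \mf D$.

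The main obstacle I expect is making the sliding step in part (1) fully rigorous: each intermediate expression must remain a valid regular expression, and each equivalence must be justified by a relation explicitly listed in Theorem \ref{thm:presentation}(2) or an immediate consequence (chiefly the interchange law derived from \eqref{eq:identity}), rather than by appeal to diagrammatic intuition. Careful bookkeeping of the valencies $v_i$ at each stage, and of which identity factors $I^{\ot m}$ appear where, is what makes this step go through cleanly.
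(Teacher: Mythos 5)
Your proof is correct and follows essentially the same route as the paper's: part (1) by sliding the appended cup past each factor via the commutation principle (Remark \ref{rem:commute}(2)) and then collapsing with the straightening relation \eqref{eq:straight} (and its $^\sharp$-transform for $L\circ R$), and part (2) by combining the forward direction with part (1). The only difference is cosmetic — you spell out the interchange step and the intermediate valencies that the paper compresses into ``several applications of Remark \ref{rem:commute}.''
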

\begin{proof}
To prove (1), let $\mf D=E_1\circ\dots\circ E_n$ be a regular expression for $D\in B_k^l$. Then
$$
\begin{aligned}
R\circ L&(\mf D)=R((I^{\ot(l-1)}\ot A)\circ(E_1\ot I)\dots\circ (E_n\ot I)\\
&=(I^{\ot(l-1)}\ot A\ot I)\circ(E_1\ot I\ot I)\dots\circ(E_n\ot I)\circ I^{\ot k}\ot U\\
&\sim(I^{\ot(l-1)}\ot A\ot I)\circ(I^{\ot l}\ot U)\circ E_1\circ\dots\circ E_n
\text{ by several applications of }\ref{rem:commute}\\
&\sim I^{\ot l}\circ E_1\circ\dots\circ E_n\text {by }\eqref{eq:straight}\\
&\sim E_1\circ\dots\circ E_n\text{ by }\eqref{eq:identity}\\
&=\mf D.\\
\end{aligned}
$$

This shows that $R\circ L(\mf D)\sim\mf D$, and the proof that $L\circ R(\mf D)\sim\mf D$ is similar.

Now to prove (2), suppose first that $\mf D, \mf D'$ are equivalent regular expressions
for $D$. Then the same sequence of moves using the relations in Theorem
\ref{thm:presentation} (2) which convert $\mf D$ into $\mf D'$ may be applied to
$L(\mf D)$ to convert it into $L(\mf D')$. This shows that if $\mf D,\mf D'$ are equivalent
regular expressions for $D$, then $L(\mf D),L(\mf D')$ are equivalent regular expressions
for $L(D)$. A similar argument proves the corresponding statement for $R(D)$.

To prove the converse,
suppose that any two regular expressions for $R(D)$ are equivalent,
and that $\mf D_1$ and $\mf D_2$ are two regular expressions for $D$. Then $R(\mf D_1)$ and
$R(\mf D_2)$ are two regular expressions for $R(D)$, and hence by hypothesis are
equivalent. Hence by the above, $L\circ R(\mf D_1)$ and $L\circ R(\mf D_2)$ are two
equivalent regular expressions for $L\circ R(D)$, which is equal to $D$ by (1).
But by (1), $L\circ R(\mf D_1)\sim \mf D_1$ and $L\circ R(\mf D_2)\sim\mf D_2$, whence
$\mf D_1\sim\mf D_2$.
\end{proof}

The following lemma is the key computation involving the relations in
Theorem \ref{thm:presentation} (2).

\begin{lemma}\label{lem:stack}
Let $\mf T_s:=E_s\circ E_{s-1}\circ\dots\circ E_0$ be a regular expression, where
$t(E_0)=U$, $a(E_0)=a$, $t(E_i)=X$ and $a(E_i)=a+i$ for $i\geq 1$. The diagram $\mf T_s$
is shown in Figure \ref{fig:Ts}.
Let $E$ be an elementary diagram of type $A$ or $X$ which does not `commute with'
$E_s\circ E_{s-1}\circ\dots\circ E_0$, i.e. such that $a-1\leq a(E)\leq a+s+1$.
Then
\begin{enumerate}
\item If $t(E)=A$, then $E\circ\mf T_s$ is equivalent to a shorter regular expression unless
$s=0$ and $a(E)=a(E_0)$. In the latter case, $E\circ\mf T_s$ is the identity multiplied by $\delta$.
\item Suppose $t(E)=X$; then

(i) if $a+1\leq a(E)\leq a+s-1$, then $E\circ\mf T_s\sim \mf T_s\circ E'$ for
an elementary diagram $E'$ of type $X$. (Thus $E$ may be `moved through' $E\circ\mf T_s$).

(ii) if $a(E)=a$ or $a+s$, then $E\circ\mf T_s$ is equivalent to
a shorter regular expression.

(iii) if $a(E)=a-1$ or $a+s+1$ then $E\circ \mf T_s\sim \mf T_{s+1}$.

\item Let $\mf T_s$ be as above and let $E$ be elementary of type $A$ or $X$.
Then $E\circ \mf T_s$ is equivalent to a shorter regular expression
(possibly multiplied by $\delta$) or to $\mf T_s\circ E'$
for some elementary $E'$, or to $\mf T_{s+1}$.
\end{enumerate}
\end{lemma}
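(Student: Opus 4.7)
The plan is to dispatch each part by a case analysis on $a(E)$, using the commutation principle of Remark \ref{rem:commute} to move $E$ past those $E_j$ at abscissa distance $\geq 2$, then applying one of the defining relations \eqref{eq:XX}--\eqref{eq:straight} to the resulting local block of two or three elementary diagrams.

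For part (1), with $E=A$ at $a+j$, $-1\leq j\leq s+1$: in the interior subcase $1\leq j\leq s-1$, I will commute $E$ past $E_{j+2},\dots,E_s$ and apply the slide relation \eqref{eq:slide} to turn $A_{a+j}\circ X_{a+j+1}\circ X_{a+j}$ into $A_{a+j+1}\circ X_{a+j}\circ X_{a+j}$, whose double crossing is cancelled by \eqref{eq:XX}. At $j=s$, $E\circ E_s=A\circ X=A$ by \eqref{eq:AX} shortens directly. At $j=-1$, $E$ commutes past all the $X$'s, and \eqref{eq:straight} itself ($A_p\circ U_{p+1}=I$) reduces $E\circ E_0=A_{a-1}\circ U_a$ to $I$. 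At $j=s+1$, iterated slides shift $A$ down through the staircase, each step replacing $A_{p+1}\circ X_p$ by $A_p\circ X_{p+1}$ and sliding the new $X$ to the bottom; after $s$ iterations we reach $A_{a+1}\circ U_a$, which equals $I$ by the $^\sharp$-dual of \eqref{eq:straight}. At $j=0$ with $s\geq 1$, a single slide together with the $^*$-dual of \eqref{eq:AX} ($X\circ U=U$) shortens the expression. The sole exception is $s=0,\,j=0$, where \eqref{eq:AU} directly gives $E\circ\mf T_0=A\circ U=\delta$.

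For part (2), with $E=X$ at $a+i$, $-1\leq i\leq s+1$: the endpoints $i=s+1$ and $i=-1$ both yield $\mf T_{s+1}$---the former by direct inspection, and the latter by commuting $E$ past $E_s,\dots,E_1$ and applying the $^*$-dual of \eqref{eq:slide} (namely $X_{p-1}\circ U_p\sim X_p\circ U_{p-1}$) to reindex the base abscissa down by one. The cases $i=s$ and $i=0$ (the latter requiring $s\geq 1$) both shorten: at $i=s$ via $E\circ E_s=I$ from \eqref{eq:XX}; at $i=0$ by first commuting past $E_s,\dots,E_2$, rewriting $X_a\circ X_{a+1}\circ U_a$ as $X_a\circ X_a\circ U_{a+1}$ by the $^*$-dual of \eqref{eq:slide}, and then applying \eqref{eq:XX}. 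The interior case $1\leq i\leq s-1$ is the heart: commute $E$ past $E_{i+2},\dots,E_s$, apply the braid relation \eqref{eq:braid} to convert $X_{a+i}\circ X_{a+i+1}\circ X_{a+i}$ into $X_{a+i+1}\circ X_{a+i}\circ X_{a+i+1}$, and observe that the new rightmost $X_{a+i+1}$ commutes with every $E_{i-1},\dots,E_0$ (distances $\geq 2$); sliding it to the very bottom reassembles the prefix into $\mf T_s$, yielding $\mf T_s\circ E'$ with $E'=X_{a+i+1}$.

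Part (3) is immediate from the case enumeration in (1) and (2), since every outcome already falls into one of the three listed possibilities. The main obstacle will be the abscissa bookkeeping in case (2)(i): I need to verify that after the braid rearrangement and the commutation of $X_{a+i+1}$ through $E_{i-1},\dots,E_0$, the remaining sequence of $X$-abscissas above $U_a$ is exactly $a+s, a+s-1, \dots, a+1$, so that the prefix is genuinely $\mf T_s$. Comparable care is needed for the iterated slide at $j=s+1$ in part (1), where one must track that the $X$'s successively deposited at the bottom do not interfere with subsequent slide moves.
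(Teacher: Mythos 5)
Your proposal is correct and follows essentially the same strategy as the paper's proof: use the commutation principle to isolate a local block of two or three elementary diagrams, then apply \eqref{eq:XX}, \eqref{eq:braid}, \eqref{eq:AX}, \eqref{eq:AU}, \eqref{eq:slide}, \eqref{eq:straight} or their $^*$/$^\sharp$ transforms, with the same case split on $a(E)$. One small point in your favour: in case (2) with $a(E)=a-1$ you correctly obtain $\mf T_{s+1}$ (rebased at $a-1$) via $X_{p}\circ U_{p+1}\sim X_{p+1}\circ U_{p}$, which matches the statement of part 2(iii), whereas the paper's written treatment of that subcase claims a shortening and appears to be a slip.
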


\begin{proof} Consider first the case where $t(E)=A$.

If $s=0$ and $a(E)=a(E_0)$, the claim follows from the loop removal relation \eqref{eq:AU}.

If $a(E)=a+s+1$, then
applying \eqref{eq:slide}, $E\circ E_s\sim E'\circ E_s'$, where $t(E')=t(E)=A$,
$t(E_s')=t(E_s)=X$, $a(E')=a+s$ and $a(E_s')=a+s+1$. It now follows by repeated application
of Remark \ref{rem:commute} about commutation, that $E\circ\mf T_s\sim E''\circ \mf T_{s-1}\circ E'''$,
where $t(E'')=A$ and $a(E'')=a+s$. Repeating this argument $s$ times, we see that
$E\circ \mf T_s$ is equivalent to a regular expression
of length $s+1$ which includes $F\circ E_0$ as a subexpression, where $t(F)=A$
and $a(F)=a+1$. Applying \eqref{eq:straight}, we see that $F\circ E\sim I^{\ot k}$ for some $k$,
and hence $E\circ\mf T_s$ is equivalent to a regular expression of length $s-1$.

If $a(E)=a+s$, then by \eqref{eq:AX}, $E\circ E_s\sim E$, and we have again shortened $E\circ\mf T_s$.

If $a\leq a(E)\leq a+s-1$, then by commutation, $E\circ\mf T_s$ is equivalent to a regular
expression with a subexpression of the form $E\circ E_i\circ E_{i-1}$, where $t(E_i)=X$
and $a(E)=a(E_i)-1$. Applying \eqref{eq:straight}, this is equivalent
to an expression $E'\circ E_i'\circ E_{i-1}$, where $a(E_i')=a(E_{i-1})$, and $t(E_i')=X$.
Using either \eqref{eq:XX} (if $i>1$) or the $^*$ of \eqref{eq:AX}, we again reduce the
length to show that $E\circ\mf T_s$ is equivalent to a shorter regular expression.

Finally if $a(E)=a-1$, we use commutation to show that $E\circ\mf T_s$ is equivalent
to a regular expression of length $s+1$ with a subexpression of the form
$E'\circ E_0$, where $t(E')=A$ and $a(E')=a-1=a(E_0)-1$. Applying \eqref{eq:straight},
we see that $E'\circ E_0\sim I^{\ot k}$ for some $k$, and this completes the proof of (1).

Now consider the case where $t(E)=X$.

If $a+1\leq a(E)\leq a+s-1$, then after applying the commutation rule, $E\circ\mf T_s$
is equivalent to a regular expression of length $s+1$ which has a subexpression of
the form $E\circ E_{a(E)+1}\circ E_{a(E)}$. But using the braid relation \eqref{eq:braid},
this is equivalent to $E'\circ E_{a(E)}\circ E_{a(E)+1}$, where $E'=E_{a(E)+1}$.
Again using commutation, we may now move the last factor below $E_0$ (since $a(E)+1\geq a+2$).
It follows that $E\circ\mf T_s\sim \mf T_s\circ E'$, where $t(E')=X$. This proves (i).

If $a(E)=a+s+1$ then evidently $E\circ\mf T_s= \mf T_{s+1}$.
If $a(E)=a+s$, the relation $X\circ X=I\ot I$ \eqref{eq:XX} shows that $E\circ E_s\sim I^{\ot r}$
for some $r$, and hence $E\circ\mf T_s$ is equivalent to a shorter regular expression.
If $a(E)=a-1$, then we may use commutation to see that
$E\circ\mf T_s\sim E_s\circ\dots\circ E_1\circ E\circ E_0$. Using the relation \eqref{eq:slide}
we see that this is equivalent to $E_s\circ\dots\circ E_1\circ E_1\circ E_0'$, where $t(E_0')=U$.
Applying \eqref{eq:XX}, we see that $E\circ\mf T_s$ is equivalent to a shorter regular expression.
Finally, if $a(E)=a$, we again use commutation to see that $E\circ\mf T_s$ is equivalent to
$E_s\circ E_{s-1}\circ\dots\circ E\circ E_1\circ E_0$. Again applying \eqref{eq:slide},
we obtain a factor $E\circ E$, and applying \eqref{eq:XX}, we again shorten the regular
expression $E\circ\mf T_s$. This completes the proof of (2).

The statement (3) is a summary of the previous two statements.
\end{proof}

\begin{proof}[Completion of the proof of Theorem \ref{thm:presentation} (2)]
It remains to prove \eqref{eq:task}.
It follows from Lemmas \ref{lem:red-raise} and \ref{lem:red-perm} that to complete the
proof of the theorem, it suffices to prove \eqref{eq:task} for any scaled diagram which can be
obtained from $D$ by raising or lowering, or multiplication by a permutation diagram.
It follows that we may take $D$ to be the scaled diagram
$D=\delta^N U^{\ot r}$ $(N\in\Z_+)$. Hence we shall be done if we prove
the following result.
\be\label{eq:main}
\text {Any two regular expressions for $D=\delta^N U^{\ot r}$ are equivalent.}
\ee

We shall prove \eqref{eq:main} by induction on $r$, starting with $r=0$.
For convenience, we adopt the following local convention:
\begin{enumerate}
\item scaled diagrams will
be simply called ``diagrams";
\item a regular expression $\mf D$ is said to be ``$\delta$-equivalent" to another regular expression
$\mf D'$ if it can be changed to $\delta^k \mf D'$ for some $k\in\Z_+$ by the relations in
Theorem \ref{thm:presentation} (2).
\end{enumerate}

Let $r=0$ and suppose $\mf D:=D_1\circ\dots\circ D_n$ is a regular expression for the empty
scaled diagram $\delta^N$ in $B_0^0$. We need to show that $\mf D$ is $\delta$-equivalent to the empty regular
expression; we do this by showing that every non-empty regular expression for the
empty scaled diagram is $\delta$-equivalent to one of shorter length.

Now by valency considerations, we must have $D_1=A$ and $D_n=U$. Let $i$ be the least
integer such that $t(D_i)=U$; then for all $j<i$, $t(D_j)=A$ or $X$. Applying
Lemma \ref{lem:stack} repeatedly, we see that since at least one of the $D_j$
for $j<i$ is of type $A$, $\mf D$ is $\delta$-equivalent to a shorter regular expression.
This proves the result for $r=0$

Now take $r>0$ and let $\mf D=D_1\circ\dots\circ D_n$ be a regular
expression for $D$. Then since at least $r$ of the $D_i$ must have type $U$, we
have $n\geq r$. Moreover if $n=r$, which happen only if $N=0$,
then the $D_i$ are all of type $U$, and have odd
abscissa, and any such regular expression represents $D$.
Any two such regular expressions (which will be called minimal) are equivalent
by the commutation rule (see Remark \ref{rem:commute}).

It therefore suffices to show that if $n>r$, then $\mf D$ is $\delta$-equivalent to a shorter
regular expression.

Clearly we have $t(D_n)=U$; if $t(D_1)=U$ then $\mf D':=D_2\circ\dots\circ D_n$ is a
regular expression for $U^{\ot (r-1)}$, and we conclude by induction on $r$ that $\mf D'$
is $\delta$-equivalent to a shorter regular expression. Thus we are finished.
Let $p=p(\mf D)$ be the least index such that $D_p$ is of type $U$. We have seen that if $p=1$
then we are finished by induction. It will therefore suffice to show that
$\mf D$ is either equivalent to a regular expression $\mf D'$ with $p(\mf D')<p(\mf D)$,
or is $\delta$-equivalent to a shorter regular expression $\mf D'$.

Thus we take $p>1$; then $t(D_p)=U$, and $t(D_i)=A$ or $X$ for $i<p$.
We now apply Lemma \ref{lem:stack} to conclude that either
we may commute one of the $D_i$ ($i<p$) past $D_p$, or $D_1\circ\dots\circ D_p
\sim \mf T_{p-1}$ or at least one of the $D_i$ ($i<p$) is of type $A$.
In the first case, we obtain a regular expression with small $p$-value;
in the second case, in the diagram $D_1\circ\dots\circ D_n$ if the nodes are
numbered $1,2,\dots,2r$ from left to right, node $a(D_p)$ would be joined to
node $a(D_p)+p$. Hence $p=1$, which has been excluded.

In the third case, suppose $i$ is the largest index such that $1\leq i\leq p-1$
and $D_i$ is of type $A$. Then either some $D_j$ ($i\leq j\leq p-1$)
can be commuted past $D_p$ by application of Remark (\ref{rem:commute}), or else
we are in the situation of Lemma \ref{lem:stack} (1). In the former case, we have reduced
$p$; in the latter, by {\it loc. cit.} $D_i\circ\dots\circ D_p$ is $\delta$-equivalent to a shorter
regular expression.

We have now shown that either $\mf D$ is $\delta$-equivalent to a shorter regular expression,
or equivalent to a regular expression which has the same length as $\mf D$ but a smaller $p$ value.

This completes the proof of \eqref{eq:main}, and hence of Theorem \ref{thm:presentation}.
\end{proof}

\begin{remark}
We note that to prove part (2) of the theorem, we could
have proceeded by regarding $\cB(\delta)$ as a quotient category of
the category of (unoriented) tangles (see Remark \ref{rem:quotient-cat}) and deduce
the relations among the generators of Brauer diagrams
from a complete set of relations among the generators of tangles
given in \cite[\S 3.2]{T1} (suppressing information about orientation).
This way we obtain all relations except the one which
enforces the removal of free loops and multiplication by powers of $\delta$,
i.e., \eqref{eq:AU}.
\end{remark}

%
%\subsection{Diagrammatics}
%
\subsection{Some useful diagrams}
We shall find the following diagrams useful in later sections of this work.
Let $A_q=A\circ (I\ot A\ot I)\dots (I^{\ot (q-1)}\ot A\ot I^{\ot (q-1)})$,
$U_q=(I^{\ot (q-1)}\ot U\ot I^{\ot (q-1)})\circ\dots\circ (I\ot U\ot I)\circ U$
and $I_q=I^{\ot q}$. These are depicted as diagrams in Figure \ref{Omega-U},
\begin{figure}[ht]
\begin{center}
\begin{picture}(340, 40)(-5,0)
\put(-5, 10){$A_q=$}
\qbezier(25, 0)(55, 60)(85, 0)
\put(33, 10){...}
\put(30, 0){\tiny$q$}
\qbezier(40, 0)(55, 40)(70, 0)
\put(90, 0){,}

\put(140, 10){$U_q=$}
\qbezier(165, 30)(195, -35)(225, 30)
\put(172, 25){\tiny$q$}
\put(172, 20){...}
\qbezier(180, 30)(195, -15)(210, 30)
\put(215, 0){,}

\put(270, 10){$I_q=$}
\put(300, 0){\line(0, 1){40}}
\put(310, 20){...}
\put(314, 0){\tiny$q$}
\put(330, 0){\line(0, 1){40}}
\put(335, 0){.}
\end{picture}
\end{center}
\caption{ }
\label{Omega-U}
\end{figure}

We shall also need $X_{s, t}$, the $(s+t, s+t)$ Brauer diagram shown in Figure \ref{X}.
\begin{figure}[h]
\begin{center}
\begin{picture}(50, 60)(0,0)

\qbezier(0, 60)(15, 30)(30, 0)
\qbezier(15, 60)(30, 30)(45, 0)

\qbezier(0, 0)(15, 30)(30, 60)
\qbezier(15, 0)(30, 30)(45, 60)

\put(5, 5){...}
\put(5, -5){\tiny$s$}

\put(30, 5){...}
\put(35, -5){\tiny$t$}
\end{picture}
\end{center}
\caption{}
\label{X}
\end{figure}

The following result is easy.

\begin{lemma} \label{isomorphism-pri}
\begin{enumerate}
\item For any Brauer diagrams $D_1\in B_k^r(\delta)$ and
$D_2\in B_r^q(\delta)$, we have $I_r\circ D_1 =D_1$ and $D_2\circ I_r =D_2$.
That is, $I_r=\id_r$ for any object $r$ of $\cB(\delta)$.
\item  The following relation holds.
\[(I_q\otimes A_q)\circ(U_q\otimes I_q)=(U_q\otimes I_q)\circ(I_q\otimes A_q)=I_q.\]
\end{enumerate}
\end{lemma}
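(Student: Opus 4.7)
The plan is to handle both claims directly from the combinatorial description of Brauer diagrams, verifying in each case that the two sides produce the same pairing (and number of free loops). Since by Theorem \ref{thm:presentation} every equation of diagrams is a consequence of the listed relations, identifying the two sides as the same Brauer diagram suffices, and no further use of the presentation is needed.

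For part (1), note that $I_r=I^{\otimes r}$ is by construction the $(r,r)$-diagram whose pairing joins lower node $i$ to upper node $i$ for each $i$. For any $D_1\in B_k^r(\delta)$, the concatenation $I_r\# D_1$ glues the $r$ upper nodes of $D_1$ to the $r$ lower nodes of $I_r$: each arc of $D_1$ terminating at upper node $j$ is simply extended by the straight strand from middle node $j$ to upper node $j$ of $I_r$, producing an arc with the same pair of boundary endpoints. No closed loops are created, so $I_r\circ D_1=D_1$, and the argument for $D_2\circ I_r=D_2$ is symmetric. One could alternatively induct on $r$ using relation \eqref{eq:identity}.

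For part (2), the most direct route is a path trace in the $(q,q)$-composite $(I_q\otimes A_q)\circ(U_q\otimes I_q)$. Lower node $i$ runs through the $I_q$-strand of $U_q\otimes I_q$ to middle node $2q+i$; this middle node lies in the $A_q$-part of the upper factor and is paired with middle node $2q+1-i$; the latter lies in the $U_q$-part of the lower factor and is paired with middle node $i$; finally, middle node $i$ is sent by the $I_q$-part of the upper factor to upper node $i$. Thus each of the $q$ resulting paths has length $4$, accounting for all $4q$ arcs of the two-layer concatenation, so no free loop is produced. Hence lower $i$ is joined to upper $i$ for every $i$, and the composite equals $I_q$; the companion zig-zag identity is handled by the same trace.

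A more algebraic alternative is induction on $q$: the case $q=1$ is precisely the straightening relation \eqref{eq:straight} (together with its $^\sharp$-transform), while the inductive step uses the recursive decompositions $A_q=A\circ(I\otimes A_{q-1}\otimes I)$ and $U_q=(I\otimes U_{q-1}\otimes I)\circ U$ and the commutation principle of Remark \ref{rem:commute} to strip off the outer $A,U$ pair via \eqref{eq:straight} and reduce to $(I_{q-1}\otimes A_{q-1})\circ(U_{q-1}\otimes I_{q-1})=I_{q-1}$. In either approach the only real task is index bookkeeping; there is no genuine obstacle, which is consistent with the authors' description of the lemma as ``easy''.
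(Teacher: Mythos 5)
Your proof is correct. The paper offers no argument for this lemma beyond the remark that it is easy, and your direct combinatorial verification --- extending arcs through the identity strands for part (1), and the path trace through the $4q$ arcs of the concatenation for part (2), confirming that lower node $i$ joins upper node $i$ with no free loops --- is precisely the computation the authors leave to the reader; the inductive alternative via \eqref{eq:straight} and its $^\sharp$-transform is also sound.
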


\begin{corollary}\label{isomorphism}
For all $p, q$ and $r$, define the linear maps
\[
\begin{aligned}
{\mathbb U}_p^q=(-\otimes I_q)\circ(I_p\otimes U_q):
B_{p+q}^r(\delta)\longrightarrow B_p^{r+q}(\delta)\\
{\mathbb A}^r_q=(I_{r+q}\otimes A_q)\circ(- \otimes I_q):
B_p^{r+q}(\delta)\longrightarrow B_{p+q}^r(\delta)
\end{aligned}
\]
Then ${\mathbb U}_p^q=R^q$ and ${\mathbb A}^r_q=L^q$ (see Definition \ref{def:raising}).
These are mutually inverse.
\end{corollary}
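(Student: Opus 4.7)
The strategy has two parts. First, I would identify $\mathbb{U}_p^q$ with $R^q$ and $\mathbb{A}_q^r$ with $L^q$; second, I would show that these maps are mutually inverse. Both parts reduce, modulo some tensor-product bookkeeping, to the single algebraic input of Lemma \ref{isomorphism-pri}(2).

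The identification $\mathbb{U}_p^q = R^q$ proceeds by induction on $q$. The case $q=1$ is just the definition of $R$ (noting $U_1 = U$ and $I_p = I^{\otimes p}$). For the inductive step, I would apply $R$ to the expression $R^{q-1}(D) = (D \otimes I_{q-1}) \circ (I_{p+1} \otimes U_{q-1})$ and reorganise using the bifunctoriality identity $(f_1 \otimes f_2) \circ (g_1 \otimes g_2) = (f_1 \circ g_1) \otimes (f_2 \circ g_2)$; the resulting expression collapses to $(D \otimes I_q) \circ (I_p \otimes U_q)$ upon invoking the recursion $U_q = (I \otimes U_{q-1} \otimes I) \circ U$, which is immediate from the definition of $U_q$ in Figure \ref{Omega-U}. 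The analogous argument for $L^q = \mathbb{A}_q^r$ is symmetric, using the dual recursion $A_q = A \circ (I \otimes A_{q-1} \otimes I)$.

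For the inverse property, I would compute $\mathbb{A}_q^r \circ \mathbb{U}_p^q(D)$ directly for $D \in B_{p+q}^r$. Expanding the definitions and using the functoriality of $-\otimes I_q$ gives
$$
\mathbb{A}_q^r \circ \mathbb{U}_p^q(D) = (I_r \otimes A_q) \circ (D \otimes I_{2q}) \circ (I_p \otimes U_q \otimes I_q).
$$
Bifunctoriality applied with the split $I_r \mid A_q$ and $D \mid I_{2q}$ simplifies the first two factors to $D \otimes A_q$. Next I would rewrite $D \otimes A_q = D \circ (I_{p+q} \otimes A_q)$ and regroup the remainder in the form $I_p \otimes (\,\cdot\,)$, yielding
$$
\mathbb{A}_q^r \circ \mathbb{U}_p^q(D) = D \circ \bigl(I_p \otimes ((I_q \otimes A_q) \circ (U_q \otimes I_q))\bigr),
$$
which by Lemma \ref{isomorphism-pri}(2) collapses to $D \circ I_{p+q} = D$. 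The opposite composition $\mathbb{U}_p^q \circ \mathbb{A}_q^r = \id$ is handled identically, appealing at the final step to the $^{\sharp}$-dual identity $(A_q \otimes I_q) \circ (I_q \otimes U_q) = I_q$, which follows from Lemma \ref{isomorphism-pri}(2) on applying the involution $^{\sharp}$.

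The only real subtlety here is combinatorial: the natural splitting points of $D \otimes A_q$ and $I_p \otimes U_q \otimes I_q$ for direct application of bifunctoriality do not coincide, so one must first relocate the $A_q$ factor via the identity $D \otimes A_q = D \circ (I_{p+q} \otimes A_q)$ before the inner composition can be collected in the form $I_p \otimes (\,\cdot\,)$. Once this pre-processing step is in place, the calculation is mechanical and the algebraic content reduces entirely to Lemma \ref{isomorphism-pri}(2), itself a manifestation of the zigzag/straightening relation \eqref{eq:straight}.
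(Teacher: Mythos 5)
Your proof is correct. The paper itself disposes of this corollary in one line, by appealing to Lemma \ref{lem:red-raise}(1), which established $R\circ L(\mathfrak D)\sim\mathfrak D$ and $L\circ R(\mathfrak D)\sim\mathfrak D$ at the level of regular expressions (the proof there unwinds the $q=1$ zigzag one strand at a time via relation \eqref{eq:straight} and commutation); the identification $\mathbb U_p^q=R^q$, $\mathbb A_q^r=L^q$ is simply asserted. You do something slightly different and arguably cleaner: you prove the identification honestly by induction on $q$ using the recursion $U_q=(I\otimes U_{q-1}\otimes I)\circ U$, and you verify the inverse property by a direct morphism-level computation with the interchange law, collapsing the whole $q$-fold zigzag in one stroke via Lemma \ref{isomorphism-pri}(2) rather than routing through the syntactic equivalence machinery of Lemma \ref{lem:red-raise}. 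This buys self-containedness (no need to invoke the observation that equivalent regular expressions represent equal morphisms) at the cost of the bifunctoriality bookkeeping you describe, which you handle correctly — including the relocation step $D\otimes A_q=D\circ(I_{p+q}\otimes A_q)$ and the appeal to the $^\sharp$-transform of Lemma \ref{isomorphism-pri}(2) for the composition in the other order. You also implicitly and correctly read $I_{r+q}\otimes A_q$ in the statement as $I_r\otimes A_q$, which is forced by valency. The two arguments ultimately rest on the same relation \eqref{eq:straight}, so the difference is one of packaging rather than substance.
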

This is clear since by Lemma \ref{lem:red-raise}, $L$ and $R$ are mutually inverse.

Let $\ast: B_p^q(\delta) \longrightarrow B_q^p(\delta)$ be the linear map defined for any
$D \in B_p^q(\delta)$ by 
$
*D=(I_p\otimes A_q)\circ(I_p\otimes D\otimes I_q)\circ(U_p\otimes I_q).
$
The diagram $*D$ is depicted in Figure \ref{D-star}.
\begin{figure}[h]
\begin{center}
\begin{picture}(100, 60)(-20,0)

\put(-5, 20){\line(1, 0){35}}
\put(-5, 20){\line(0, 1){20}}
\put(30, 20){\line(0, 1){20}}
\put(-5, 40){\line(1, 0){35}}
\put(8, 25){D}

\qbezier(5, 40)(40, 95)(60, 0)
\qbezier(20, 40)(35, 70)(45, 0)
\put(45, 10){...}

\qbezier(5, 20)(-15, -10)(-20, 60)
\qbezier(20, 20)(-20, -35)(-35, 60)
\put(-30, 50){...}

\end{picture}
\end{center}
\caption{$*D$}
\label{D-star}
\end{figure}

\begin{lemma}\label{anti-invol}
The map $\ast$ coincides with the anti-involution 
$D\mapsto D^{*\circ\sharp}$ discussed in \S\ref{ssec:involutions}.
That is, $*D=D^{*\circ\sharp}$ for any diagram $D$.
\end{lemma}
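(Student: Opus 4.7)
My plan is to exploit the fact that both $D \mapsto *D$ (defined by the formula in the statement) and $D \mapsto D^{*\circ\sharp}$ are $K$-linear maps $B_p^q(\delta) \to B_q^p(\delta)$, together with Theorem \ref{thm:presentation}(1), which says that every Brauer diagram is built from the four elementary generators $I, X, A, U$ using only $\circ$ and $\otimes$. It therefore suffices to show that (a) the two maps agree on each of the four generators, and (b) they satisfy the same rules with respect to composition and tensor product. The rules for $D \mapsto D^{*\circ\sharp}$ are already recorded at the end of \S\ref{ssec:involutions}: combining $(D_1 \circ D_2)^* = D_2^* \circ D_1^*$ with $(D_1 \circ D_2)^\sharp = D_1^\sharp \circ D_2^\sharp$, and similarly for $\otimes$, yields that $D \mapsto D^{*\circ\sharp}$ reverses composition and also reverses the left-right order in tensor products. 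The problem thus reduces to proving the analogous two identities for the $*$ defined by the wrap-around formula.

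The first step I would carry out is to establish
\[
*(D_1 \circ D_2) = (*D_2) \circ (*D_1), \qquad *(D_1 \otimes D_2) = (*D_2) \otimes (*D_1).
\]
Both are ``snake'' or ``zigzag'' identities that rest on the straightening relation \eqref{eq:straight} and its $^*,^\sharp$-transforms, in particular on the form $(I \otimes A) \circ (U \otimes I) = I$. For the composition identity one writes out $(*D_2) \circ (*D_1)$ and observes that the wrap-around strings between the two cup-cap frames form facing cups and caps which collapse, by Lemma \ref{isomorphism-pri}(2) together with repeated straightening, to produce a single frame enclosing $D_1 \circ D_2$. The second step is to check the identity on each generator. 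Since $I$ and $X$ are fixed by both $^*$ and $^\sharp$, one needs $*I = I$ and $*X = X$: the former is immediate from the $^\sharp$-transform of \eqref{eq:straight}, and the latter uses the slide relation \eqref{eq:slide} in addition. Since $A^{*\circ\sharp} = U$ and $U^{*\circ\sharp} = A$, one checks directly from the defining formula that $*A = (I_2 \otimes A) \circ U_2 = U$ (one application of straightening) and dually $*U = A$. An induction on the length of a regular expression for $D$ then yields $*D = D^{*\circ\sharp}$.

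The hard part will be the tensor identity $*(D_1 \otimes D_2) = (*D_2) \otimes (*D_1)$. Geometrically it asserts that enclosing a side-by-side pair $D_1 \otimes D_2$ in a single nested cup-cap frame produces the same diagram as enclosing each factor in its own frame and then swapping their left-right order; this is not visibly true before the relations are applied, because the nested cups of $U_{p_1+p_2}$ and $A_{q_1+q_2}$ straddle both factors, whereas the factorised version keeps the wrap-around arcs localised around each factor. Verifying the identity requires routing the wrap-around arcs of the right factor across those of the left factor by repeated use of the braid relation \eqref{eq:braid} and slide relation \eqref{eq:slide}, and only then applying straightening to merge the pieces. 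Once this identity is in place, the composition identity and generator-by-generator verification are comparatively routine, and the concluding induction is straightforward.
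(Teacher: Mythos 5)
Your overall strategy is logically valid, but as written it has a gap precisely at the step you yourself identify as hard: the identity $*(D_1\otimes D_2)=(*D_2)\otimes(*D_1)$ is only described, not proved, and it carries essentially all the content of the lemma. The reduction via Theorem \ref{thm:presentation}(1) to generators plus the two reversal identities is sound, the generator checks $*I=I$, $*X=X$, $*A=U$, $*U=A$ are correct, and the composition identity does follow from Lemma \ref{isomorphism-pri}(2) together with the commutation of operations on disjoint strands. But until the tensor identity is actually established, the induction on regular expressions cannot be run.

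More importantly, the whole machinery is unnecessary, and this is why the paper's proof is one line. Both $*D$ and $D^{*\circ\sharp}$ are already morphisms of $\cB(\delta)$, i.e.\ pairings of the $p+q$ boundary points (possibly scaled by $\delta$), and composition of diagrams is computed combinatorially by concatenation. So one simply traces the arcs in Figure \ref{D-star}: the $j$-th upper node of $*D$ runs down through $I_p$ and around the $j$-th nested cup of $U_p$ to the $(p+1-j)$-th lower node of $D$, while the $i$-th lower node of $*D$ runs up through $I_q$ and around the corresponding cap of $A_q$ to the $(q+1-i)$-th upper node of $D$; no free loops are created because every cup of $U_p$ and every cap of $A_q$ has one end on the boundary of $*D$. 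This says exactly that $*D$ is $D$ reflected in a horizontal line and then in a vertical line, i.e.\ $D^{*\circ\sharp}$. Equality of morphisms in $\cB(\delta)$ is checked by comparing pairings, not by exhibiting a chain of applications of the relations \eqref{eq:XX}--\eqref{eq:straight}; those relations are needed only to compare \emph{words} in the generators. Once you see this, your ``hard part'' dissolves (and indeed the cleanest proof of your tensor identity is to apply the lemma itself together with the rules for $^*$ and $^\sharp$ recorded in \S\ref{ssec:involutions}).
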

This is easily seen in terms of diagrams.

\subsection{The Brauer algebra}\label{sect:Brauer-algebra}

For any object $r$ in $\cB(\delta)$,  the set of morphisms
$B_r^r(\delta)$ from $r$ to itself form
a unital associative $K$-algebra under composition
of Brauer diagrams. This is the Brauer algebra of degree $r$ with parameter $\delta$,
which we will denote by $B_r(\delta)$.
The first two results of the following lemma are well known.

\begin{lemma}\label{lem:brprops}
\begin{enumerate}
\item For $i=1,\dots,r-1$, let $s_i$ and $e_i$ respectively be the $(r, r)$ Brauer diagrams shown in
Figure \ref{s-e} below.
\begin{figure}[ht]
\begin{center}
\begin{picture}(350, 60)(0,0)
\put(0, 0){\line(0, 1){60}}
\put(40, 0){\line(0, 1){60}}
\put(18, 30){...}
\put(10, 0){$i-1$}

\qbezier(60, 0)(70, 30)(80, 60)
\qbezier(60, 60)(70, 30)(80, 0)

\put(100, 0){\line(0, 1){60}}
\put(140, 0){\line(0, 1){60}}
\put(118, 30){...}
\put(150, 0){, }

\put(200, 0){\line(0, 1){60}}
\put(240, 0){\line(0, 1){60}}
\put(218, 30){...}
\put(210, 0){$i-1$}

\qbezier(260, 0)(270, 45)(280, 0)
\qbezier(260, 60)(270, 15)(280, 60)

\put(300, 0){\line(0, 1){60}}
\put(340, 0){\line(0, 1){60}}
\put(318, 30){...}
\put(350, 0){. }
\end{picture}
\end{center}
\caption{ }
\label{s-e}
\end{figure}
Then $B_r(\delta)$ has the following presentation as $K$-algebra with anti-involution $*$.
The generators are $\{s_i, e_i \mid i=1, 2, \dots, r-1\}$, and relations
\[
\begin{aligned}
&s_is_j=s_js_i,\;s_ie_j=e_js_i,\;e_ie_j=e_je_i, \quad\text{if $|i-j|\geq 2$},\\
&s_i^2=1,\; s_is_{i+1}s_i=s_{i+1}s_is_{i+1},\\
&s_ie_i=e_is_i=e_i,\\
&e_i^2=\delta e_i,\\
&e_ie_{i\pm 1}e_i=e_i,\\
&s_ie_{i+1}e_i=s_{i+1}e_i,
\end{aligned}
\]
where the last five relations being valid for all applicable $i$.

\item The elements $s_1,\dots,s_{r-1}$ generate a subalgebra of $B_r(\delta)$, isomorphic to
the group algebra $K\Sym_r$ of the symmetric group $\Sym_r$.

\item  The map $\ast$ of Lemma \ref{anti-invol} restricts to an anti-involution of the Brauer
algebra.
\end{enumerate}
\end{lemma}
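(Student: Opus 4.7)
The plan is to dispose of parts (1) and (2) briefly, since the authors note these are well-known, and to focus attention on part (3). For (1), I would express the algebra generators in terms of the categorical elementary diagrams as $s_i = I^{\otimes(i-1)} \otimes X \otimes I^{\otimes(r-i-1)}$ and $e_i = (I^{\otimes(i-1)} \otimes U \otimes I^{\otimes(r-i-1)}) \circ (I^{\otimes(i-1)} \otimes A \otimes I^{\otimes(r-i-1)})$. Each listed relation then translates directly from one of the categorical relations of Theorem \ref{thm:presentation}(2), typically after applying the commutation principle (Remark \ref{rem:commute}); for instance $s_i^2 = 1$ from \eqref{eq:XX}, the braid relation from \eqref{eq:braid}, $s_i e_i = e_i s_i = e_i$ from \eqref{eq:AX} and its $*$-transform, $e_i^2 = \delta e_i$ from \eqref{eq:AU}, and the ``tangle'' relations $e_i e_{i\pm1} e_i = e_i$ and $s_i e_{i+1} e_i = s_{i+1} e_i$ from the sliding and straightening relations \eqref{eq:slide}--\eqref{eq:straight}. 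That the listed relations are complete is the classical result of Brauer, which can be established by a standard spanning-set and dimension-count argument: the relations imply the algebra has dimension at most $(2r-1)!!$, which matches the number of Brauer $(r,r)$ diagrams. Part (2) is then immediate from the observation that the permutation diagrams (those in which every arc joins a lower node to an upper node) form a subalgebra whose basis is naturally identified with $\Sym_r$, with the Coxeter relations already present in the presentation of part (1).

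For part (3), the key input is Lemma \ref{anti-invol}, which identifies the diagrammatically defined map $*$ with the composite $D \mapsto D^{*\sharp}$ of the two involutions of \S\ref{ssec:involutions}. Since both $*$ and $\sharp$ fix every object of $\cB(\delta)$, their composite preserves $B_r^r(\delta) = B_r(\delta)$ setwise. Using the formulas $(D_1 \circ D_2)^* = D_2^* \circ D_1^*$ and $(D_1 \circ D_2)^\sharp = D_1^\sharp \circ D_2^\sharp$ recorded in \S\ref{ssec:involutions}, one computes
\[
(D_1 \circ D_2)^{*\sharp} = (D_2^* \circ D_1^*)^\sharp = D_2^{*\sharp} \circ D_1^{*\sharp},
\]
so $*$ reverses multiplication on $B_r(\delta)$. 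It squares to the identity because $*$ and $\sharp$ commute as reflections in perpendicular axes of the plane of a diagram. Hence $*$ is an anti-involution on $B_r(\delta)$, as claimed.

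The main obstacle, if one were to insist on deriving part (1) purely from the categorical presentation of Theorem \ref{thm:presentation} rather than invoking the classical fact, is verifying completeness of the algebra relations without recourse to the dimension count: this would require tracking how a sequence of categorical moves passing through objects other than $r$ can be simulated by relations involving only elements of $B_r(\delta)$. Bypassing this subtlety via the Brauer diagram basis—observing that the listed relations evidently hold diagrammatically and that both sides have the same dimension $(2r-1)!!$—is by far the simplest route, and is the one I would take.
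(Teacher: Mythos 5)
Your proposal is correct, and your verification that the listed relations hold (via the identifications $s_i=I^{\ot(i-1)}\ot X\ot I^{\ot(r-i-1)}$, $e_i=(\cdots U\cdots)\circ(\cdots A\cdots)$ and the relations \eqref{eq:XX}--\eqref{eq:straight}) together with your argument for part (3) match what the paper intends. Where you genuinely diverge is on the \emph{completeness} of the relations in part (1). The paper obtains completeness as a corollary of Theorem~\ref{thm:presentation}(2): any identity between words in the $s_i,e_i$ is an identity between words in $I,X,A,U$, hence a consequence of the seven categorical relations; the single observation the authors add --- that every regular expression for a diagram in $B_r(\delta)$ contains equally many factors of type $A$ and of type $U$ --- is what allows the intermediate stages of such a derivation, which a priori pass through Hom-spaces $B_k^\ell$ with $(k,\ell)\neq(r,r)$, to be repackaged as applications of the algebra relations. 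This is precisely the subtlety you name at the end and then deliberately sidestep. Your alternative --- showing the abstractly presented algebra is spanned by at most $(2r-1)!!$ normal forms and matching this against the $(2r-1)!!$ linearly independent diagrams --- is the classical route and is valid, but it is logically independent of Theorem~\ref{thm:presentation} and imports a nontrivial rewriting argument that you leave as ``standard''. What the paper's approach buys is that Lemma~\ref{lem:brprops} becomes a genuine consequence of the categorical presentation, which is the point of the section; what yours buys is independence from the long proof of that theorem. One small point you omit, which the paper flags because it matters later: the anti-involution of part (3) satisfies $*s_i=s_{r+1-i}$ and $*e_i=e_{r+1-i}$, so it is \emph{not} the standard cellular anti-involution fixing each generator.
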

Parts (1) and (2) follow from Theorem \ref{thm:presentation}, noting that any
regular expression for a diagram in $B_r(\delta)$ contains an equal number of factors of
type $A$ and $U$. The stated relations are precise analogues of the relations in Theorem
\ref{thm:presentation} (2).
Part (3) is easy to prove. However we note that
$*s_i= s_{r+1-i}$ and $*e_i= e_{r+1-i}$. This is different from
the standard cellular anti-involution $^*$ of the Brauer algebra.

We remark that multiplying the last relation above by $e_i$ on the left and using
two of the earlier relations, we obtain
\[
e_is_{i+ 1}e_i=e_i,
\]
a relation which we shall often use, together with its transform under $*$:
$e_is_{i-1}e_i=e_i$.

\medskip
Now we prove some technical lemmas for later use.
\begin{lemma} \label{lem:Sigma-1} Let
$\Sigma_\epsilon(r)=\sum_{\sigma\in \Sym_r} (-\epsilon)^{|\sigma|} \sigma\in B_r(\delta)$,
where $\epsilon= \pm 1$ and $|\sigma|$ is the length of $\sigma$.
Represent $\Sigma_\epsilon(r)$  pictorially by Figure \ref{Sigma-r}.

\begin{figure}[h]
\begin{center}
\begin{picture}(100, 60)(-5,0)
\put(20, 40){\line(0, 1){20}}
\put(35, 50){...}
\put(60, 40){\line(0, 1){20}}

\put(0, 20){\line(1, 0){80}}
\put(0, 20){\line(0, 1){20}}
\put(80, 20){\line(0, 1){20}}
\put(0, 40){\line(1, 0){80}}
\put(40, 28){$r$}

\put(20, 20){\line(0, -1){20}}
\put(35, 10){...}
\put(60, 20){\line(0, -1){20}}

\put(85, 0){.}
\end{picture}
\end{center}
\caption{}
\label{Sigma-r}
\end{figure}

Then the following relations hold for all $r$.
\begin{enumerate}
\item
\[
\begin{picture}(80, 60)(0, -30)
\put(0, 10){\line(1, 0){60}}
\put(0, -10){\line(1, 0){60}}
\put(0, 10){\line(0, -1){20}}
\put(60, 10){\line(0, -1){20}}
\put(25, -3){$r$}

\put(10, 10){\line(0, 1){15}}
\put(23, 15){$\cdots$}
\put(50, 10){\line(0, 1){15}}

\put(10, -10){\line(0, -1){15}}
\put(23, -20){$\cdots$}
\put(50, -10){\line(0, -1){15}}

\put(70, -2){$=$}
\end{picture}
\begin{picture}(80, 60)(-10, -30)
\put(0, 10){\line(1, 0){50}}
\put(0, -10){\line(1, 0){50}}
\put(0, 10){\line(0, -1){20}}
\put(50, 10){\line(0, -1){20}}
\put(15, -3){$r-1$}

\put(10, 10){\line(0, 1){15}}
\put(18, 15){$\cdots$}
\put(40, 10){\line(0, 1){15}}

\put(10, -10){\line(0, -1){15}}
\put(18, -20){$\cdots$}
\put(40, -10){\line(0, -1){15}}

\put(60, -25){\line(0, 1){50}}

\put(70, -3){$-$}

\end{picture}
\begin{picture}(80, 60)(-70, -30)

\put(-60, -3){$\epsilon (r-2)!^{-1}$}

\put(0, 10){\line(1, 0){40}}
\put(0, 25){\line(1, 0){40}}
\put(0, 10){\line(0, 1){15}}
\put(40, 10){\line(0, 1){15}}
\put(12, 15){\tiny$r-1$}

\put(5, 10){\line(0, -1){20}}
\put(25, 10){\line(0, -1){20}}
\put(8, -3){$\cdots$}

\qbezier(35,10)(45, 5)(50, -35)
\qbezier(35,-10)(45, -5)(50, 35)

\put(0, -10){\line(1, 0){40}}
\put(0, -25){\line(1, 0){40}}
\put(0, -10){\line(0, -1){15}}
\put(40, -10){\line(0, -1){15}}
\put(12, -20){\tiny$r-1$}

\put(5, 25){\line(0, 1){10}}
\put(35, 25){\line(0, 1){10}}
\put(14, 28){$\cdots$}

\put(5, -25){\line(0, -1){10}}
\put(35, -25){\line(0, -1){10}}
\put(14, -33){$\cdots$}

\put(55, -35){.}
\end{picture}
\]

\item
\[
\begin{picture}(80, 60)(0, -30)
\put(0, 10){\line(1, 0){60}}
\put(0, -10){\line(1, 0){60}}
\put(0, 10){\line(0, -1){20}}
\put(60, 10){\line(0, -1){20}}
\put(25, -3){$r$}

\put(10, 10){\line(0, 1){15}}
\put(20, 15){$\cdots$}
\put(40, 10){\line(0, 1){15}}

\qbezier(50, -10)(60, -40)(65, 0)
\qbezier(50, 10)(60, 40)(65, 0)

\put(10, -10){\line(0, -1){15}}
\put(20, -20){$\cdots$}
\put(40, -10){\line(0, -1){15}}

\put(75, -3){$=$}
\end{picture}
\begin{picture}(80, 60)(-90, -30)
\put(-80, -5){$-\epsilon(r-1-\epsilon\delta)$}
\put(0, 10){\line(1, 0){60}}
\put(0, -10){\line(1, 0){60}}
\put(0, 10){\line(0, -1){20}}
\put(60, 10){\line(0, -1){20}}
\put(20, -3){$r-1$}

\put(10, 10){\line(0, 1){15}}
\put(23, 15){$\cdots$}
\put(50, 10){\line(0, 1){15}}

\put(10, -10){\line(0, -1){15}}
\put(23, -20){$\cdots$}
\put(50, -10){\line(0, -1){15}}

\put(60, -25){.}
\end{picture}
\]

\item
\[
\begin{picture}(80, 60)(0, -30)
\put(0, 10){\line(1, 0){60}}
\put(0, -10){\line(1, 0){60}}
\put(0, 10){\line(0, -1){20}}
\put(60, 10){\line(0, -1){20}}
\put(25, -3){$r$}

\put(10, 10){\line(0, 1){15}}
\put(20, 15){$\cdots$}
\put(40, 10){\line(0, 1){15}}

\put(50, -10){\line(0, -1){15}}
\qbezier(50, 10)(65, 40)(70, -25)

\put(10, -10){\line(0, -1){15}}
\put(20, -20){$\cdots$}
\put(40, -10){\line(0, -1){15}}

\put(75, -3){$=$}
\end{picture}
\begin{picture}(80, 60)(-65, -30)
\put(-55, -5){$\sum_{i=0}^{r-1} (-\epsilon)^i$}
\put(0, 10){\line(1, 0){60}}
\put(0, -10){\line(1, 0){60}}
\put(0, 10){\line(0, -1){20}}
\put(60, 10){\line(0, -1){20}}
\put(20, -3){$r-1$}

\put(10, 10){\line(0, 1){15}}
\put(23, 15){$\cdots$}
\put(50, 10){\line(0, 1){15}}

\put(10, -10){\line(0, -1){15}}
\put(11, -20){\tiny{...}}
\put(20, -10){\line(0, -1){15}}
\put(40, -10){\line(0, -1){15}}
\put(41, -20){\tiny{...}}
\put(42, -28){\tiny$i$}
\put(50, -10){\line(0, -1){15}}

\qbezier(30, -25)(45, 0)(60, -25)

\put(65, -25){.}
\end{picture}
\]
\end{enumerate}
\end{lemma}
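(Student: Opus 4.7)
The unifying observation is that $\Sigma_\epsilon(r) = \sum_{\sigma\in\Sym_r}(-\epsilon)^{|\sigma|}\sigma$ is (up to scalar) the $\epsilon$-isotypic idempotent of the group algebra $K\Sym_r$, which embeds into $B_r(\delta)$ via Lemma~\ref{lem:brprops}(2). Consequently $s_i\,\Sigma_\epsilon(r) = \Sigma_\epsilon(r)\,s_i = -\epsilon\,\Sigma_\epsilon(r)$ for every $1\le i\le r-1$, and this absorption property, combined with a standard parabolic coset decomposition of $\Sym_r$ over $\Sym_{r-1}$, will drive all three identities. My plan is to prove (1) first and then to run a parallel coset-counting argument in (2) and (3) rather than try to derive them from (1) via the Brauer relations.

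For (1), I would use the decomposition $\Sym_r = \bigsqcup_{i=0}^{r-1}\Sym_{r-1}\cdot\pi_i$ with minimal-length right coset representatives $\pi_i = s_{r-1}s_{r-2}\cdots s_{r-i}$ (and $\pi_0 = e$), so that $|\tau\pi_i| = |\tau|+i$ for $\tau\in\Sym_{r-1}$. Isolating the trivial coset yields
\[
\Sigma_\epsilon(r) - (\Sigma_\epsilon(r-1)\otimes I) \;=\; \sum_{\sigma(r)\neq r}(-\epsilon)^{|\sigma|}\sigma.
\]
To identify the right-hand side with $-\epsilon(r-2)!^{-1}(\Sigma_\epsilon(r-1)\otimes I)\,s_{r-1}\,(\Sigma_\epsilon(r-1)\otimes I)$, I use two facts: the set $\{\sigma\in\Sym_r : \sigma(r)\neq r\}$ is the non-trivial double coset $\Sym_{r-1}\,s_{r-1}\,\Sym_{r-1}$, and each such $\sigma$ has exactly $(r-2)!$ expressions of the form $\sigma = \tau_1\,s_{r-1}\,\tau_2$ because the stabiliser of $s_{r-1}$ under the $\Sym_{r-1}\times\Sym_{r-1}$-action $(\tau_1,\tau_2)\cdot\alpha = \tau_1\alpha\tau_2^{-1}$ is the diagonal copy of $\Sym_{r-2}$; and multiplicativity of the sign gives $(-\epsilon)^{|\tau_1|+|\tau_2|} = -\epsilon(-\epsilon)^{|\sigma|}$ uniformly across each fibre. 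These combine to give $(\Sigma_\epsilon(r-1)\otimes I)\,s_{r-1}\,(\Sigma_\epsilon(r-1)\otimes I) = -\epsilon(r-2)!\sum_{\sigma(r)\neq r}(-\epsilon)^{|\sigma|}\sigma$, and the claim follows.

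For (2), I interpret the left-hand side as the partial trace $\mathrm{tr}_r(\Sigma_\epsilon(r))$ obtained by closing the $r$-th strand. For a permutation $\sigma$ this returns $\delta\cdot(\sigma|_{\Sym_{r-1}})$ when $\sigma(r) = r$ (the $r$-th strand forms a free loop contributing $\delta$), and the ``cycle contraction'' $\sigma'\in\Sym_{r-1}$ (remove $r$ from its cycle) when $\sigma(r)\neq r$. The map $\sigma\mapsto\sigma'$ has uniform fibres of size $r-1$ over $\{\sigma : \sigma(r)\neq r\}$, indexed by the choice $i=\sigma^{-1}(r)\in\{1,\dots,r-1\}$, and $\sigma$ differs from the canonical $\Sym_r$-extension of $\sigma'$ by the single transposition $(\sigma(r),r)$, so $|\sigma|$ and $|\sigma'|$ have opposite parity. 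Summation gives $\mathrm{tr}_r(\Sigma_\epsilon(r)) = \delta\,\Sigma_\epsilon(r-1) - \epsilon(r-1)\,\Sigma_\epsilon(r-1) = -\epsilon(r-1-\epsilon\delta)\,\Sigma_\epsilon(r-1)$.

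For (3), the decomposition $\Sigma_\epsilon(r) = \sum_{i=0}^{r-1}(-\epsilon)^i\,\Sigma_\epsilon(r-1)\,\pi_i$ from part (1) applies directly to the composition on the left. Because $\pi_i = s_{r-1}s_{r-2}\cdots s_{r-i}$ routes the $r$-th strand of $\Sigma_\epsilon(r)$ leftward through positions $r, r-1, \dots, r-i$ while $\Sigma_\epsilon(r-1)\otimes I$ acts trivially on that strand, composing with the rightward redirection diagram produces, for each $i$, exactly the $i$-th depicted diagram on the right-hand side, with the weight $(-\epsilon)^i$ coming from the length $i$ of $\pi_i$. The main obstacle throughout is the parity bookkeeping: each step reduces to verifying that the length parity is uniform across a particular coset fibre, which rests on multiplicativity of the sign together with the additive length identity $|\tau\pi_i| = |\tau|+i$ for minimal-length coset representatives; beyond this, the proofs are routine manipulations in $K\Sym_r \subset B_r(\delta)$ using the relations of Lemma~\ref{lem:brprops}.
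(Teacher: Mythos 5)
Your proposal is correct. For part (1) you use exactly the paper's mechanism --- the decomposition $\Sym_r=\Sym_{r-1}\amalg\Sym_{r-1}s_{r-1}\Sym_{r-1}$, with the factor $(r-2)!^{-1}$ accounted for by the diagonal $\Sym_{r-2}$ stabiliser of $s_{r-1}$ under the two-sided $\Sym_{r-1}$-action --- so there is nothing to add there. Where you diverge is in parts (2) and (3): the paper obtains (2) by closing the last strand in identity (1) (the first term yields a free loop contributing $\delta$, the second yields $\Sigma_\epsilon(r-1)\circ\Sigma_\epsilon(r-1)=(r-1)!\,\Sigma_\epsilon(r-1)$, whence the coefficient $\delta-\epsilon(r-1)=-\epsilon(r-1-\epsilon\delta)$), and obtains (3) from (1) by induction on $r$; you instead compute (2) as a direct partial trace via the contraction $\sigma\mapsto\sigma'$ with fibres of size $r-1$ and a parity flip, and prove (3) in one step from the right-coset expansion $\Sigma_\epsilon(r)=\sum_{i=0}^{r-1}(-\epsilon)^i(\Sigma_\epsilon(r-1)\otimes I)\pi_i$ with $\pi_i=s_{r-1}\cdots s_{r-i}$. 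Both routes are sound and of comparable length; your non-inductive treatment of (3) is arguably the cleaner of the two, since the minimal coset representatives $\pi_i$ visibly produce the $i$-indexed arcs on the right-hand side, whereas the paper's inductive route requires unwinding the recursion to see where the $r$ distinct diagrams come from.
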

\begin{proof}
Part (1) generalises \cite[Lemma 5.1 (i)]{LZ4} and is a simple consequence of the double coset decomposition of
$\Sym_r$ into $\Sym_{r-1}\amalg \Sym_{r-1}s_{r-1}\Sym_{r-1}$. Part (2) immediately follows from
(1). Statement (3) can be obtained from (1) by induction on $r$.
\end{proof}

\begin{remark}
Symmetry considerations easily show that the second diagram on the right hand side
of Lemma \ref{lem:Sigma-1} (1) is a $(r-2)!$-multiple of a $\Z$-linear combination of Brauer diagrams;
thus the second term is still defined over $\Z$ despite having the coefficient $\frac{1}{(r-2)!}$.
The same remark applies to similar terms appearing in Lemma \ref{lem:Sigma} and its proof.
\end{remark}

\begin{lemma} \label{lem:Sigma}
Set $\epsilon=-1$. Then for all $k\ge 0$,
\begin{eqnarray}\label{eq:Sigma}
\begin{aligned}
\begin{picture}(80, 50)(50, -25)
\put(0, 10){\line(1, 0){60}}
\put(0, -10){\line(1, 0){60}}
\put(0, 10){\line(0, -1){20}}
\put(60, 10){\line(0, -1){20}}
\put(25, -3){$r$}

\put(5, 10){\line(0, 1){15}}
\put(15, 15){$\cdots$}
\put(35, 10){\line(0, 1){15}}
\qbezier(45, 10)(50, 35)(55, 10)

\put(5, -10){\line(0, -1){15}}
\put(7, -20){...}
\put(20, -10){\line(0, -1){15}}
\qbezier(45, -10)(50, -35)(55, -10)
\qbezier(25, -10)(30, -35)(35, -10)
\put(35, -20){...}
\put(38, -30){\tiny $k$}

\put(65, -3){$=$}
\end{picture}
\begin{picture}(80, 50)(-45, -25)
\put(-100, -5){$4k(r+\frac{\delta}{2}-k-1)$}
\put(-10, 10){\line(1, 0){60}}
\put(-10, -10){\line(1, 0){60}}
\put(-10, 10){\line(0, -1){20}}
\put(50, 10){\line(0, -1){20}}
\put(10, -3){$r-2$}

\put(-5, 10){\line(0, 1){15}}
\put(13, 15){$\cdots$}
\put(45, 10){\line(0, 1){15}}

\put(-5, -10){\line(0, -1){15}}
\put(-2, -20){...}
\put(10, -10){\line(0, -1){15}}
\qbezier(35, -10)(40, -35)(45, -10)
\qbezier(15, -10)(20, -35)(25, -10)
\put(25, -20){...}
\put(22, -30){\tiny $k-1$}

\put(55, -3){$+$}
\end{picture}
\begin{picture}(80, 60)(-105, -25)
\put(-75, -3){$(r-2-2k)!^{-1}$}

\put(0, 10){\line(1, 0){55}}
\put(0, 25){\line(1, 0){55}}
\put(0, 10){\line(0, 1){15}}
\put(55, 10){\line(0, 1){15}}
\put(18, 15){\tiny $r-2$}

\put(5, 10){\line(0, -1){20}}
\put(15, 10){\line(0, -1){20}}
\put(5, -3){...}

\qbezier(20,10)(25, -7)(30, 10)
\qbezier(40,10)(45, -7)(50, 10)
\put(30, 7){\tiny ...}
\put(33, -2){\tiny$k$}
\qbezier(20,-10)(25, 7)(30, -10)

\put(0, -10){\line(1, 0){40}}
\put(0, -25){\line(1, 0){40}}
\put(0, -10){\line(0, -1){15}}
\put(40, -10){\line(0, -1){15}}
\put(10, -20){\tiny $r-2k$}

\put(5, 25){\line(0, 1){10}}
\put(50, 25){\line(0, 1){10}}
\put(20, 28){$\cdots$}

\put(5, -25){\line(0, -1){10}}
\put(35, -25){\line(0, -1){10}}
\put(14, -33){$\cdots$}

\put(50, -35){.}
\end{picture}
\end{aligned}
\end{eqnarray}
\end{lemma}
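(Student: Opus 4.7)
My plan is to prove \eqref{eq:Sigma} by induction on $k$. For the base case $k=0$, the first term on the right-hand side of \eqref{eq:Sigma} vanishes because of the prefactor $4k$, and the remaining identity---between $\Sigma_{-1}(r)$ with one top cap and $(r-2)!^{-1}$ times the stacked second-term diagram---should follow by applying Lemma~\ref{lem:Sigma-1}(1) to decompose $\Sigma_{-1}(r)=\Sigma_{-1}(r-1)\otimes I + (\text{correction})$, observing that the top cap annihilates the leading piece via \eqref{eq:AX} and \eqref{eq:identity}, and leaving only the correction which by inspection matches the RHS second term at $k=0$.

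For the inductive step at $k\geq 1$, I would peel off the rightmost bottom cap together with the top cap using the following maneuver. First apply Lemma~\ref{lem:Sigma-1}(1) to $\Sigma_{-1}(r)$; in the leading piece $\Sigma_{-1}(r-1)\otimes I$, the identity strand at the last position connects the top cap to the rightmost bottom cap, forming a closed loop (contributing a factor of $\delta$) alongside $\Sigma_{-1}(r-1)$ carrying $k-1$ bottom caps. Next apply Lemma~\ref{lem:Sigma-1}(2) at $\epsilon=-1$ (yielding coefficient $r-1+\delta$) and Lemma~\ref{lem:Sigma-1}(1) once more to further reduce $\Sigma_{-1}(r-1)$ to $\Sigma_{-1}(r-2)$. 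The sum over the $k$ possible choices of which bottom cap pairs with the top cap---enabled by the invariance $s_{r-1}\Sigma_{-1}(r)=\Sigma_{-1}(r)$ together with the absorption identity $e_is_{i\pm 1}e_i=e_i$ noted after Lemma~\ref{lem:brprops}---should produce the coefficient $4k(r+\delta/2-k-1)$ in front of the first RHS term. The correction contributions from the various applications of Lemma~\ref{lem:Sigma-1}(1) are expected to collect, after simplification using the commutation principle of Remark~\ref{rem:commute} and the slide and braid relations \eqref{eq:slide}, \eqref{eq:braid}, into the single stacked second term with coefficient $(r-2-2k)!^{-1}$.

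The hard part will be the combinatorial bookkeeping of the coefficient. The factor $4k=2\cdot 2k$ should emerge with $k$ counting the combinatorial choice of cap, and $4$ arising from the combination of (a) the two possible orientations in which the loop closes (the $s_{r-1}$-invariance of $\Sigma_{-1}$ doubles the contribution) and (b) the coefficient from Lemma~\ref{lem:Sigma-1}(2) rescaled by the $\tfrac{1}{2}$ implicit in $\delta/2$. In addition, I will need to verify that the apparent denominators $(r-j)!$ arising from iterated applications of Lemma~\ref{lem:Sigma-1}(1) collapse into the single denominator $(r-2-2k)!$ on the right-hand side; this should follow from the observation in the remark after Lemma~\ref{lem:Sigma-1}(3) that the intermediate factorials divide evenly, so that all coefficients remain in $\Z[\delta]$.
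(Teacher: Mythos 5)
Your overall strategy (induction on $k$, driven by the decomposition of Lemma \ref{lem:Sigma-1}(1)) is in the right family, but two of your key steps are false as stated, and the ingredient that actually produces the coefficient is missing. First, since $\epsilon=-1$ the element $\Sigma_{-1}(r)=\sum_\sigma\sigma$ is the \emph{symmetriser}, and the relation $A\circ X=A$ makes the top cap get \emph{absorbed} by it, not killed: $(I^{\ot(r-2)}\ot A)\circ(\Sigma_{-1}(r-1)\ot I)\neq 0$. (Annihilation of the cap is the orthogonal, $\epsilon=+1$, phenomenon exploited in Lemma \ref{lemma:Ep-1}.) The $k=0$ case is true, but for the trivial reason that $(I^{\ot(r-2)}\ot A)\circ\Sigma(r)$ is already symmetric in its $r-2$ outputs, so pre-composing with $(r-2)!^{-1}\Sigma(r-2)$ changes nothing. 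Second, in the leading piece $\Sigma_{-1}(r-1)\ot I$ the top cap and the rightmost bottom cup do \emph{not} close into a free loop worth $\delta$: the cap joins outputs $r-1,r$ and the cup joins inputs $r-1,r$, so the identity strand at position $r$ merely routes one end of the cup around to one end of the cap, and what results is the partial closure of the last strand of $\Sigma(r-1)$ --- exactly the configuration of Lemma \ref{lem:Sigma-1}(2), worth $(r-2+\delta)\,\Sigma(r-2)$, not $\delta\,\Sigma(r-1)$. Third, you never invoke Lemma \ref{lem:Sigma-1}(3). The correction term $(r-2)!^{-1}\Sigma(r-1)s_{r-1}\Sigma(r-1)$ coming from Lemma \ref{lem:Sigma-1}(1) is not negligible after capping and cupping: handled via Lemma \ref{lem:Sigma-1}(3) and its image under $*$, it supplies the remaining $3(r-2)+\delta$ of the coefficient $4(r-2)+2\delta=4(r-2+\tfrac{\delta}{2})$ and is the sole source of the stacked second term. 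Your ``$4=2\times2$'' heuristic is built on the spurious $\delta$-loop and cannot be repaired without this contribution.

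For comparison, the paper isolates the $k=1$ identity \eqref{eq:k-1} as the one genuine computation, proved precisely by combining parts (1), (2) and (3) of Lemma \ref{lem:Sigma-1}, and then inducts on $k$ by appending one more cup to the case $k$ and applying \eqref{eq:k-1} (with $r$ replaced by $r-2k$) to the inner $\Sigma(r-2k)$ block of the stacked term. In particular the factor $k$ in $4k(r+\tfrac{\delta}{2}-k-1)$ accumulates over the induction steps rather than arising from a sum over which cup meets the cap. If you want to keep your ``choose which cup pairs with the cap'' picture you would need to justify it as a separate symmetrisation identity; as written it is not a proof.
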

\vspace{.2cm}
\begin{proof} For $k=0$, the formula is an identity.
The important case is $k=1$, where the formula becomes
\begin{eqnarray}\label{eq:k-1}
\begin{aligned}
\begin{picture}(80, 50)(20, -25)
\put(0, 10){\line(1, 0){60}}
\put(0, -10){\line(1, 0){60}}
\put(0, 10){\line(0, -1){20}}
\put(60, 10){\line(0, -1){20}}
\put(25, -3){$r$}

\put(5, 10){\line(0, 1){15}}
\put(15, 15){$\cdots$}
\put(35, 10){\line(0, 1){15}}
\qbezier(45, 10)(50, 35)(55, 10)

\put(5, -10){\line(0, -1){15}}
\put(15, -20){$\cdots$}
\put(35, -10){\line(0, -1){15}}
\qbezier(45, -10)(50, -35)(55, -10)

\put(65, -3){$=$}
\end{picture}
\begin{picture}(80, 50)(-40, -25)
\put(-65, -5){$4(r-2+\frac{\delta}{2})$}
\put(0, 10){\line(1, 0){50}}
\put(0, -10){\line(1, 0){50}}
\put(0, 10){\line(0, -1){20}}
\put(50, 10){\line(0, -1){20}}
\put(15, -3){$r-2$}

\put(10, 10){\line(0, 1){15}}
\put(18, 15){$\cdots$}
\put(40, 10){\line(0, 1){15}}

\put(10, -10){\line(0, -1){15}}
\put(18, -20){$\cdots$}
\put(40, -10){\line(0, -1){15}}

\put(60, -3){$+$}
\end{picture}
\begin{picture}(80, 60)(-90, -25)

\put(-55, -3){$(r-4)!^{-1}$}

\put(0, 10){\line(1, 0){50}}
\put(0, 25){\line(1, 0){50}}
\put(0, 10){\line(0, 1){15}}
\put(50, 10){\line(0, 1){15}}
\put(15, 15){\tiny $r-2$}

\put(5, 10){\line(0, -1){20}}
\put(25, 10){\line(0, -1){20}}
\put(8, -3){$\cdots$}

\qbezier(35,10)(40, -7)(45, 10)
\qbezier(35,-10)(40, 7)(45, -10)

\put(0, -10){\line(1, 0){50}}
\put(0, -25){\line(1, 0){50}}
\put(0, -10){\line(0, -1){15}}
\put(50, -10){\line(0, -1){15}}
\put(15, -20){\tiny $r-2$}

\put(5, 25){\line(0, 1){10}}
\put(45, 25){\line(0, 1){10}}
\put(18, 28){$\cdots$}

\put(5, -25){\line(0, -1){10}}
\put(45, -25){\line(0, -1){10}}
\put(18, -33){$\cdots$}

\put(55, -35){.}
\end{picture}
\end{aligned}
\end{eqnarray}
To prove it, we first obtain from Lemma \ref{lem:Sigma-1}(1) with $\epsilon=-1$
the following relation.
\[
\begin{picture}(80, 50)(0, -30)
\put(0, 10){\line(1, 0){60}}
\put(0, -10){\line(1, 0){60}}
\put(0, 10){\line(0, -1){20}}
\put(60, 10){\line(0, -1){20}}
\put(25, -3){$r$}

\put(5, 10){\line(0, 1){15}}
\put(15, 15){$\cdots$}
\put(35, 10){\line(0, 1){15}}
\qbezier(45, 10)(50, 35)(55, 10)

\put(5, -10){\line(0, -1){15}}
\put(15, -20){$\cdots$}
\put(35, -10){\line(0, -1){15}}
\qbezier(45, -10)(50, -35)(55, -10)

\put(65, -3){$=$}
\end{picture}
\begin{picture}(80, 60)(0, -30)
\put(0, 10){\line(1, 0){60}}
\put(0, -10){\line(1, 0){60}}
\put(0, 10){\line(0, -1){20}}
\put(60, 10){\line(0, -1){20}}
\put(20, -3){$r-1$}

\put(10, 10){\line(0, 1){15}}
\put(20, 15){$\cdots$}
\put(40, 10){\line(0, 1){15}}

\qbezier(50, -10)(60, -40)(65, 0)
\qbezier(50, 10)(60, 40)(65, 0)

\put(10, -10){\line(0, -1){15}}
\put(20, -20){$\cdots$}
\put(40, -10){\line(0, -1){15}}

\put(75, -3){$+$}
\end{picture}
\begin{picture}(80, 60)(-70, -30)
\put(-60, -3){$(r-2)!^{-1}$}

\put(0, 10){\line(1, 0){40}}
\put(0, 25){\line(1, 0){40}}
\put(0, 10){\line(0, 1){15}}
\put(40, 10){\line(0, 1){15}}
\put(12, 15){\tiny $r-1$}

\put(5, 10){\line(0, -1){20}}
\put(25, 10){\line(0, -1){20}}
\put(8, -3){$\cdots$}

\qbezier(35,10)(45, 0)(50, -15)
\qbezier(35,-10)(45, 0)(50, 15)
\qbezier(35,25)(45, 45)(50, 15)
\qbezier(35,-25)(45, -45)(50, -15)

\put(0, -10){\line(1, 0){40}}
\put(0, -25){\line(1, 0){40}}
\put(0, -10){\line(0, -1){15}}
\put(40, -10){\line(0, -1){15}}
\put(12, -20){\tiny $r-1$}

\put(5, 25){\line(0, 1){10}}
\put(25, 25){\line(0, 1){10}}
\put(10, 28){$\cdots$}

\put(5, -25){\line(0, -1){10}}
\put(25, -25){\line(0, -1){10}}
\put(10, -33){$\cdots$}

\put(55, -35){.}
\end{picture}
\]
Using Lemma \ref{lem:Sigma-1}(2) to the first diagram on the right hand side, and applying
Lemma \ref{lem:Sigma-1}(3) and the corresponding relation under the anti-involution $\ast$ to the
second diagram, we obtain \eqref{eq:k-1}.

The general case can be proven by induction on $k$. From
\eqref{eq:Sigma} at $k$, we obtain
\[
\begin{picture}(80, 50)(50, -30)
\put(0, 10){\line(1, 0){60}}
\put(0, -10){\line(1, 0){60}}
\put(0, 10){\line(0, -1){20}}
\put(60, 10){\line(0, -1){20}}
\put(25, -3){$r$}

\put(5, 10){\line(0, 1){15}}
\put(15, 15){$\cdots$}
\put(35, 10){\line(0, 1){15}}
\qbezier(45, 10)(50, 35)(55, 10)

\put(5, -10){\line(0, -1){15}}
\put(7, -20){...}
\put(20, -10){\line(0, -1){15}}
\qbezier(45, -10)(50, -35)(55, -10)
\qbezier(25, -10)(30, -35)(35, -10)
\put(35, -20){...}
\put(35, -30){\tiny $k+1$}

\put(65, -3){$=$}
\end{picture}
\begin{picture}(80, 50)(-45, -30)
\put(-100, -5){$4k(r+\frac{\delta}{2}-k-1)$}
\put(-10, 10){\line(1, 0){60}}
\put(-10, -10){\line(1, 0){60}}
\put(-10, 10){\line(0, -1){20}}
\put(50, 10){\line(0, -1){20}}
\put(10, -3){$r-2$}

\put(-5, 10){\line(0, 1){15}}
\put(13, 15){$\cdots$}
\put(45, 10){\line(0, 1){15}}

\put(-5, -10){\line(0, -1){15}}
\put(-2, -20){...}
\put(10, -10){\line(0, -1){15}}
\qbezier(35, -10)(40, -35)(45, -10)
\qbezier(15, -10)(20, -35)(25, -10)
\put(25, -20){...}
\put(25, -30){\tiny $k$}

\put(55, -3){$+$}
\end{picture}
\begin{picture}(80, 60)(-105, -30)
\put(-75, -3){$(r-2-2k)!^{-1}$}

\put(0, 10){\line(1, 0){55}}
\put(0, 25){\line(1, 0){55}}
\put(0, 10){\line(0, 1){15}}
\put(55, 10){\line(0, 1){15}}
\put(18, 15){\tiny $r-2$}

\put(5, 10){\line(0, -1){20}}
\put(15, 10){\line(0, -1){20}}
\put(5, -3){...}

\qbezier(20,10)(25, -7)(30, 10)
\qbezier(40,10)(45, -7)(50, 10)
\put(30, 7){\tiny ...}
\put(33, -2){\tiny$k$}
\qbezier(20,-10)(25, 7)(30, -10)

\put(0, -10){\line(1, 0){40}}
\put(0, -25){\line(1, 0){40}}
\put(0, -10){\line(0, -1){15}}
\put(40, -10){\line(0, -1){15}}
\put(10, -20){\tiny $r-2k$}

\put(5, 25){\line(0, 1){10}}
\put(50, 25){\line(0, 1){10}}
\put(20, 28){$\cdots$}

\put(5, -25){\line(0, -1){10}}
\put(15, -25){\line(0, -1){10}}
\put(6, -33){...}
\qbezier(20,-25)(25, -45)(30, -25)
\put(50, -35){.}
\end{picture}
\]
Using \eqref{eq:k-1} in the second term on the right hand side, we arrive at
the $k+1$ case of \eqref{eq:Sigma}. This completes the proof.
\end{proof}

\section{A covariant functor}
Let $K$ be a field.
Let $V=K^m$ be an $m$-dimensional vector space with a non-degenerate bilinear form $( - , -)$,
which is either symmetric or skew symmetric. When the form is skew symmetric,
non-degeneracy requires $m=2n$ to be even.
Let $G$ denote the isometry group of the form, so that
$G=\{g\in\GL(V)\mid (gv, gw)=(v, w),   \forall  v, w\in V\}$.
Then $G$ is the orthogonal group $\Or(V)$ if the form is symmetric,
and the symplectic group $\Sp(V)$ if the form is skew symmetric.

Given a basis $\{b_1,\dots,b_m\}$ for $V$, let $\{{\bar b}_1,\dots, {\bar b}_m\}$
be the dual basis of $V$, identified with $V^*$ via the map $v(\in V)\mapsto \phi_v(\in V^*)$
where $\phi_v(x):=(v,x)$; thus $( {\bar b}_i,  b_j)=\delta_{ij}$.

For any positive integer $t$, the space $V^{\otimes t}$
is a $G$-module in the usual way: $g(v_1\otimes\dots\otimes v_t)
= g v_1\otimes gv_2\otimes\dots\otimes gv_t$.
Moreover the form on $V$ induces a non-degenerate bilinear form $[-,-]$
on $V^{\otimes t}$, given by $[v_1\otimes\dots\otimes v_t, w_1\otimes\dots\otimes w_t]
:=\prod_{i=1}^t (v_i, w_i)$,
which permits the identification of $V^{\otimes t}$ with its dual space
${V^{\otimes t}}^*=\Hom_K(V^{\otimes t}, K)$.

Define $c_0\in V\ot V$ by $c_0=\sum_{i=1}^m b_i\otimes {\bar b}_i$ in $V\otimes V$. Then $c_0$
is canonical in that it is independent of the basis, and is invariant under $G$.
We shall consider various $G$-equivariant maps $\beta:V^{\ot s}\to V^{\ot t}$
for $s,t\in\Z_{\geq 0}$. Among these we have the following.
%Consider the linear maps
\begin{eqnarray}\label{P-C-C}
\begin{aligned}
&&P: V\otimes V\longrightarrow V\otimes V, &\quad v\otimes w \mapsto w\otimes v, \\
&&\check{C}: K \longrightarrow V\otimes V,  &\quad 1\mapsto c_0, \\
&&\hat{C}: V\otimes V \longrightarrow K, &\quad v\otimes w\mapsto \langle v, w \rangle.
\end{aligned}
\end{eqnarray}
They have the following properties.
\begin{lemma} \label{lem:PAU}
Let $\epsilon=\epsilon(G)$ be $1$ (resp. $-1$) if $G=\Or(V)$ (resp. $\Sp(V)$).
Denote the identity map on $V$ by $\id$.
\begin{enumerate}
\item The element $c_0$ belongs to
$(V\otimes V)^G$ and satisfies $P(c_0)=\epsilon c_0$.

\item The maps $P$, $\check{C}$ and $\hat{C}$ are all $G$-equivariant,
and
\begin{eqnarray}
&P^2=\id^{\ot 2}, \quad
(P\ot \id)(\id\ot P)(P\ot \id) = (\id\ot P)(P\ot\id)(\id\ot P),
\label{eq:PPP}\\
&P \check{C} = \epsilon \check{C}, \quad
\hat{C} P = \epsilon  \hat{C}, \label{eq:fse-es}\\
&\hat{C}\check{C}=\epsilon \dim V,  \quad (\hat{C}\ot\id)(\id\ot\check{C})=\id=(\id\ot\hat{C})(\check{C}\ot\id), \label{eq:CC}\\
&(\hat{C}\ot\id)\circ (\id\ot P)= (\id\ot\hat{C})\circ (P\ot\id), \label{eq:CPC-I}\\
&(P\ot \id)\circ(\id\ot \check{C})=(\id\ot P)\circ(\check{C}\ot \id). \label{eq:CPC-P}
%%&(\id\ot \hat{C})\circ (P\ot \id)\circ(\id\ot \check{C})= \epsilon\, \id=
%%(\hat{C}\ot\id)\circ(\id\ot P)\circ(\check{C}\ot \id), \label{eq:CPC-I}\\
\end{eqnarray}
\end{enumerate}
\end{lemma}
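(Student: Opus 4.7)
The plan is direct verification throughout: establish (1) by a bookkeeping argument about dual bases and the Gram matrix, then derive each identity of (2) either as an immediate consequence of (1) or by a short computation on basis tensors. There is no single hard step; the main care required is the consistent handling of the identifications induced by the (possibly skew) form $(-,-)$.

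For part (1), I will first prove that $c_0=\sum_i b_i\otimes\bar b_i$ is independent of the chosen basis. If $c_j=\sum_k M_{jk}b_k$ is a second basis with dual basis $\bar c_j$ (relative to $(-,-)$), the defining equations $(\bar c_j,c_\ell)=\delta_{j\ell}$ force $\bar c_j=\sum_k N_{jk}\bar b_k$ with $N=(M^T)^{-1}$, and then $\sum_j c_j\otimes\bar c_j$ collapses to $\sum_k b_k\otimes\bar b_k$. Applying this to the basis $\{gb_i\}$, whose form-dual is $\{g\bar b_i\}$ because $(g\bar b_i,gb_j)=(\bar b_i,b_j)=\delta_{ij}$, gives $gc_0=c_0$. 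For the equality $P(c_0)=\epsilon c_0$, I will write $c_0=\sum_{ij}A_{ij}\,b_i\otimes b_j$ where $A=G^{-1}$ is the inverse of the Gram matrix $G_{ij}=(b_i,b_j)$; since $G^T=\epsilon G$, also $A^T=\epsilon A$, so
\[
P(c_0)=\sum_{ij}A_{ij}\,b_j\otimes b_i=\sum_{ij}A_{ji}\,b_i\otimes b_j=\epsilon\sum_{ij}A_{ij}\,b_i\otimes b_j=\epsilon c_0.
\]

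For part (2), most identities are immediate. The swap $P$ satisfies $P^2=\id^{\otimes 2}$ and the braid relation \eqref{eq:PPP} by definition, and it is $G$-equivariant because $G$ acts diagonally; equivariance of $\hat C$ records the invariance of $(-,-)$, and equivariance of $\check C$ is a restatement of (1). The relation $P\check C=\epsilon\check C$ in \eqref{eq:fse-es} is (1) applied to $\check C(1)=c_0$, and $\hat C P=\epsilon\hat C$ is just $(w,v)=\epsilon(v,w)$. For $\hat C\check C$ I compute $\hat C(c_0)=\sum_i(b_i,\bar b_i)=\epsilon\sum_i(\bar b_i,b_i)=\epsilon m$, giving the first part of \eqref{eq:CC}. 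For the two straightening identities in \eqref{eq:CC}, applying $(\hat C\otimes\id)(\id\otimes\check C)$ to $v\in V$ yields $\sum_i(v,b_i)\bar b_i$; writing $v=\sum_j v_j b_j$ reduces this to $\sum_{j,k}v_j(GA)_{jk}b_k=v$, while the companion identity uses $(\bar b_i,v)=v_i$ directly from the defining property of the dual basis. Finally, the sliding identities \eqref{eq:CPC-I} and \eqref{eq:CPC-P} are verified by evaluating both sides on $u\otimes v\otimes w$ and on $v$ respectively: each side of \eqref{eq:CPC-I} reduces to $(u,w)v$, and each side of \eqref{eq:CPC-P} to $\sum_i b_i\otimes v\otimes\bar b_i$, completing the proof.
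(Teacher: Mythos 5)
Your proof is correct and follows essentially the same route as the paper's: direct verification on basis tensors using the dual basis and the (skew-)symmetry of the Gram matrix, with the paper writing out only the computation $\hat C\check C=\sum_i(b_i,\bar b_i)=\epsilon\dim V$ and declaring the remaining identities evident or similar. Your version simply supplies in full the details (basis independence of $c_0$, $A^T=\epsilon A$, $GA=I$) that the paper omits.
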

\begin{proof}
Equation \eqref{eq:PPP} reflects standard properties of permutations,
and the relations \eqref{eq:fse-es} are evident.
We prove the other relations.
Consider for example $\hat{C}\check{C}=\hat{C}(\sum_i b_i\ot\bar{b}_i)=\sum_i (b_i, \bar{b}_i)$. The far right hand side
is $\sum_i \epsilon = \epsilon\, \dim V$. This proves the first relation of \eqref{eq:CC}.
The proofs of the remaining relations are similar, and therfore omitted.
\end{proof}

\begin{definition}
We denote by $\cT_G(V)$ the full subcategory of $G$-modules with objects $V^{\otimes r}$ ($r=0, 1, \dots$),
where $V^{\otimes 0}=K$ by convention. The usual tensor product of $G$-modules and
of $G$-equivariant maps is a bi-functor $\cT_G(V)\times \cT_G(V)\longrightarrow \cT_G(V)$, which will be
called the tensor product of the category. We call $\cT_G(V)$ the {\em category of
tensor representations of $G$}.
\end{definition}
Note that $\Hom_G(V^{\otimes r}, V^{\otimes t})=0$ unless $r+t$ is even. The zero module is not an object of $\cT_G(V)$,
thus the category is only pre-additive but not additive.

\begin{remark}
The category $\cT_G(V)$ is also a strict monoidal category with a symmetric braiding in the sense of \cite{JS}, where the braiding is given by the permutation maps $V^{\otimes r}\otimes V^{\otimes t}\longrightarrow  V^{\otimes t}\otimes V^{\otimes r}$, $v\ot w\mapsto w\ot v$.
\end{remark}

We have the following result.
\begin{theorem}\label{thm:functor}
There is a unique additive covariant functor $F: \cB(\epsilon m) \longrightarrow \cT_G(V)$
of pre-additive categories with the following properties:
\begin{enumerate}
\item[(i)] $F$ sends the object
$r$ to $V^{\otimes r}$ and morphism $D: k \to \ell$ to
$F(D): V^{\otimes k}\longrightarrow V^{\otimes l}$ where $F(D)$
is defined on the generators of Brauer diagrams by
\begin{eqnarray}\label{eq:F-generating}
\begin{aligned}
F\left(
\begin{picture}(30, 20)(0,0)
\put(15, -15){\line(0, 1){35}}
\end{picture}\right)=\id_V,
\quad&
F\left(
\begin{picture}(30, 20)(0,0)
\qbezier(5, -15)(15, 3)(25, 20)
\qbezier(5, 20)(15, 3)(25, -15)
\end{picture}\right) = \epsilon P, \\
F\left(
\begin{picture}(30, 20)(0,0)
\qbezier(5, 20)(15, -50)(25, 20)
\end{picture}\right) = \check{C}, \quad&
F\left(
\begin{picture}(30, 20)(0,0)
\qbezier(5, -15)(15, 50)(25, -15)
\end{picture}\right) = \hat{C};
\end{aligned}
\end{eqnarray}
\item[(ii)] $F$ respects tensor products, so that for any
objects $r, r'$ and morphisms $D, D'$ in $\cB(\epsilon m)$,
\[
F(r\otimes r')=V^{\otimes r}\otimes V^{\otimes r'}=F(r)\ot F(r'), \quad
\text{ and }F(D\otimes D')= F(D)\otimes F(D').
\]
\end{enumerate}
\end{theorem}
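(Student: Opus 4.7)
The plan is to use the presentation of $\cB(\epsilon m)$ supplied by Theorem \ref{thm:presentation}. On objects $F$ is forced by (i). On morphisms, Theorem \ref{thm:presentation}(1) asserts that every Brauer diagram is a composition-and-tensor-product word in the four elementary diagrams $I,X,A,U$, so the prescriptions in (i) on these generators, together with the rules $F(D_1\circ D_2)=F(D_1)\circ F(D_2)$ and $F(D_1\ot D_2)=F(D_1)\ot F(D_2)$, pin down $F(D)$ on every morphism. This gives uniqueness at once; the content of the theorem is therefore existence, i.e.\ the \emph{well-definedness} of the map so described.

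To establish well-definedness I would appeal to Theorem \ref{thm:presentation}(2): it is enough to show that each of the seven relations \eqref{eq:identity}--\eqref{eq:straight}, together with their images under $^*$ and $^\sharp$, is carried to an identity of $G$-equivariant maps by the recipe \eqref{eq:F-generating}. Each of these is an immediate consequence of Lemma \ref{lem:PAU}. In outline: \eqref{eq:identity} is trivial; \eqref{eq:XX} gives $(\epsilon P)^2=\id^{\ot 2}$, which is the first part of \eqref{eq:PPP}; \eqref{eq:braid} amounts to the braid identity for $P$ in \eqref{eq:PPP} after cancelling a common factor $\epsilon^3$; \eqref{eq:AX} gives $\hat C\circ\epsilon P=\hat C$, which rearranges to the second relation of \eqref{eq:fse-es}; the relation \eqref{eq:AU} reads $\hat C\check C=\epsilon m$, which is precisely the first identity of \eqref{eq:CC} and pinpoints the choice of parameter $\delta=\epsilon m$; \eqref{eq:slide} reduces, after cancelling $\epsilon$, to \eqref{eq:CPC-I}; and \eqref{eq:straight} reduces to the second identity of \eqref{eq:CC}. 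The $^*$-transforms are handled analogously using the dual identities $P\check C=\epsilon\check C$, \eqref{eq:CPC-P}, and $(\id\ot\hat C)(\check C\ot\id)=\id$. The $^\sharp$-transforms follow from the left-right symmetry of $P$, $\check C$ and $\hat C$, which in turn follows from $P^2=\id^{\ot 2}$ together with \eqref{eq:fse-es}.

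Once these relations have been verified, $K$-linear extension to the free modules $B_k^\ell(\epsilon m)$ produces the functor at the level of morphism spaces, and property (ii) holds on the generators by construction. To see that it propagates to all morphisms one uses the identity $(\alpha\circ\alpha')\ot(\beta\circ\beta')=(\alpha\ot\beta)\circ(\alpha'\ot\beta')$ recorded in Remark \ref{rem:commute} and inducts on the length of a regular expression. I anticipate no genuine obstacle beyond careful bookkeeping; the only subtleties are the two sign/parameter choices imposed on us by the relations, namely the factor $\epsilon$ in $F(X)=\epsilon P$ (needed to make \eqref{eq:AX} and its $^*$-transform hold) and the identification $\delta=\epsilon m$ (needed for \eqref{eq:AU}). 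Together these pin the source category down to $\cB(\epsilon m)$, and this is the point at which the input from classical invariant theory (via $\hat C\check C=\epsilon\dim V$) enters the construction.
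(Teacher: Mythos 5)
Your proposal is correct and follows essentially the same route as the paper: uniqueness from Theorem \ref{thm:presentation}(1), and well-definedness by checking that the images of $I,X,A,U$ under \eqref{eq:F-generating} satisfy the relations of Theorem \ref{thm:presentation}(2), which is exactly the content of Lemma \ref{lem:PAU}. Your relation-by-relation matching (including the role of the sign $\epsilon$ in $F(X)=\epsilon P$ and of $\delta=\epsilon m$ in \eqref{eq:AU}) is accurate and merely spells out what the paper states in one line.
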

\begin{proof}
We want to show that the functor $F$
is uniquely defined, and gives rise
to an additive covariant functor from $\cB(\epsilon m)$ to $\cT_G(V)$.

By Lemma \ref{lem:PAU}, the linear maps in \eqref{eq:F-generating}
are all $G$-module maps, and by Theorem \ref{thm:presentation}(1),
the above requirements define $F$ on all objects of $\cB(\epsilon m)$;
it is clear that $F$ respects tensor products of objects.
As a covariant functor, $F$ preseves
composition of Brauer diagrams, and by (ii) $F$ respects
tensor products of morphisms. It remains only to show that
$F$ is well-defined.

To prove this, we need to show that the images of the generators satisfy
the relations in Theorem \ref{thm:presentation}(\ref{relations}). This is precisely
the content of equations
\eqref{eq:CC}-\eqref{eq:CPC-P} in Lemma \ref{lem:PAU}(2).

Hence for any morphism $D$ in $\cB(\epsilon m)$,
$F(D)$ is indeed a well defined morphism in $\cT_G(V)$.
\end{proof}

\begin{remark}
The functor $F$ is a tensor functor between braided strict monoidal categories.
\end{remark}

\begin{lemma}\label{lem:transfrom}
Let $H_s^t = \Hom_G(V^{\otimes s}, V^{\otimes t})$ for all $s, t\in \N$.
\begin{enumerate}
\item The $K$-linear maps
\[
\begin{aligned}
&F{\mathbb U}_p^q:=(-\otimes \id_V^{\otimes q})(\id_V^{\otimes p}\otimes F(U_q)):
H_{p+q}^r \longrightarrow H_p^{r+q}, \\
&F{\mathbb A}^r_q:=(\id_V^{\otimes r}\otimes F(A_q))(- \otimes \id_V^{\otimes q}):
H_p^{r+q} \longrightarrow H_{p+q}^r
\end{aligned}
\]
are well defined and are mutually inverse isomorphisms.
\item For each pair $k, \ell$ of objects in $\cB(\epsilon m)$, the functor $F$ induces a linear map
\begin{eqnarray}\label{eq:functionF}
\begin{aligned}
{F}_k^\ell: B_k^\ell(\epsilon m)\longrightarrow H_k^\ell=\Hom_G(V^{\otimes k}, V^{\otimes \ell}), \quad
D \mapsto F(D),
\end{aligned}
\end{eqnarray}
and the following diagrams are commutative.
\begin{displaymath}
    \xymatrix{
        B_p^{r+q}(\epsilon m) \ar[r]^{{\mathbb A}^r_q} \ar[d]_{F_p^{r+q}} &  B_{p+q}^r(\epsilon m) \ar[d]^{F_{p+q}^r} \\
         H_p^{r+q}  \ar[r]_{F{\mathbb A}^r_q}  & H_{p+q}^r}
\quad\quad
\xymatrix{
         B_{p+q}^r(\epsilon m) \ar[r]^{{\mathbb U}_p^q} \ar[d]_{F_{p+q}^r} &  B_p^{r+q}(\epsilon m)  \ar[d]^{F_p^{r+q}} \\
         H_{p+q}^r \ar[r]_{F{\mathbb U}_p^q}  & H_p^{r+q}. }
\end{displaymath}
\end{enumerate}
\end{lemma}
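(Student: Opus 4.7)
The plan is to reduce both parts to functoriality of $F$ together with Lemma \ref{isomorphism-pri}(2). For part (1), well-definedness of $F\mathbb U_p^q$ and $F\mathbb A_q^r$ is immediate: since $F(U_q)$ and $F(A_q)$ are morphisms in $\cT_G(V)$, hence $G$-equivariant, tensoring with identity maps and composing with any $G$-equivariant $\phi$ yields a $G$-equivariant map of the correct type. Both maps are $K$-linear by construction.

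To show that they are mutually inverse, I take $\phi \in H_{p+q}^r$ and use the interchange law $(f \ot g) \circ (f' \ot g') = (f \circ f') \ot (g \circ g')$ to rewrite $F\mathbb A_q^r(F\mathbb U_p^q(\phi))$ as
\[
\phi \circ \bigl(\id_V^{\ot p} \ot \bigl[(\id_V^{\ot q} \ot F(A_q)) \circ (F(U_q) \ot \id_V^{\ot q})\bigr]\bigr).
\]
The bracketed inner expression is $F$ applied to $(I_q \ot A_q) \circ (U_q \ot I_q)$, which by Lemma \ref{isomorphism-pri}(2) equals $I_q$; hence by Theorem \ref{thm:functor} it acts as $\id_V^{\ot q}$, and the whole composite collapses to $\phi$. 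The identity $F\mathbb U_p^q \circ F\mathbb A_q^r = \id$ follows symmetrically from the other half of Lemma \ref{isomorphism-pri}(2).

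For part (2), the map $F_k^\ell$ is $K$-linear because $F$ is an additive functor between pre-additive categories, and it takes values in $H_k^\ell$ since the image of every Brauer diagram under $F$ is a morphism in $\cT_G(V)$. The commutativity of the two diagrams then reduces to the observation that $\mathbb A^r_q$, $\mathbb U_p^q$ on Brauer diagrams and $F\mathbb A_q^r$, $F\mathbb U_p^q$ on the corresponding Hom-spaces are defined by the same recipe of pre- and post-composition involving $A_q$, $U_q$, identities, and the tensor product. The equalities $F(\mathbb A^r_q D) = F\mathbb A_q^r(F(D))$ and $F(\mathbb U_p^q D) = F\mathbb U_p^q(F(D))$ therefore follow at once from the composition- and tensor-preserving properties of $F$ recorded in Theorem \ref{thm:functor}. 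I expect no serious obstacle; the whole argument is routine bookkeeping within the monoidal category $\cT_G(V)$, the only care needed being the unambiguous interpretation of the tensor factors $K = V^{\ot 0}$ in the interchange computations.
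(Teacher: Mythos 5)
Your proposal is correct and follows essentially the same route as the paper: part (1) is obtained by transporting the diagrammatic identity $(I_q\otimes A_q)\circ(U_q\otimes I_q)=I_q$ (Lemma \ref{isomorphism-pri}(2), the content behind Corollary \ref{isomorphism}) through the tensor functor $F$, and part (2) is immediate from $F$ preserving composition and tensor products. Your version merely spells out the interchange-law computation that the paper's one-line "apply $F$ to Corollary \ref{isomorphism}" leaves implicit, which is a fair and correct elaboration.
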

\begin{proof}
%Applying the covariant functor $F$ to Lemma \ref{isomorphism-pri}(2), we obtain
%\begin{eqnarray}\label{AU-id}
%\begin{aligned}
%&(F(A_q)\otimes \id_V^{\otimes q})(\id_V^{\otimes q}\otimes F(U_q)) \\
%&= (\id_V^{\otimes q}\otimes F(A_q)) (F(U_q)\otimes \id_V^{\otimes q})\\
%&=\id_V^{\otimes q}.
%\end{aligned}
%\end{eqnarray}
%Part (1) easily follows from this formula.
Part (1) follows by applying the functor $F$ to Corollary \ref{isomorphism}, using
Theorem \ref{thm:functor}.

Now for any $D\in B_p^{r+q}(\epsilon m)$,
${\mathbb A}^r_q(D)=(I_{r+q}\otimes A_q)\circ(D \otimes I_q)$. Since
$F$ preserves both composition and tensor product of Brauer diagrams,
\[
\begin{aligned}
F({\mathbb A}^r_q(D)) &= (\id_V^{\otimes (r+q)}\otimes F(A_q))(F(D) \otimes \id_V^{\otimes q})\\
                    &=F{\mathbb A}^r_q(F(D)).
\end{aligned}
\]
This proves the commutativity of the first diagram in part (2). The commutativity of the other diagram
is proved in the same way.
\end{proof}

We shall require the next lemma, which is surely well known. Nevertheless, we supply a proof by
adapting some computations in \cite{ZGB} to the present context.
\begin{lemma}\label{lem:jtrace}
For any endomorphism $L\in\End_K(V^{\ot r})$ define the Jones trace $J(L)$ by
\be\label{eq:jt}
J(L)=F(A_r)\circ (L\ot \id_V^{\ot r})\circ F(U_r)\in \End_K(K)\simeq K,
\ee
where $A_r$ and $U_r$ are the capping and cupping operations defined above.
Then $\Tr(L, V^{\ot r})=\ep^rJ(L)$.

In particular, if $L=F(D)$, for $D\in B_r^r(\ep m)$, we have
$$
\Tr(F(D),V^{\ot r})=\ep^rA_r\circ (D\ot I_r)\circ U_r:=J(D).
$$
The map $J: B_r^r(\ep m)\to K$ is referred to as the Jones trace on the Brauer
algebra.
\end{lemma}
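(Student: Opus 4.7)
The plan is to prove the identity $\Tr(L, V^{\otimes r}) = \epsilon^r J(L)$ by a direct computation of both sides in a fixed basis, reducing the composition $F(A_r) \circ (L \otimes \id_V^{\otimes r}) \circ F(U_r)$ to an iterated pairing of basis vectors.

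First I would make $F(U_r)(1) \in V^{\otimes 2r}$ explicit. Since $F(U) = \check{C}$ sends $1 \mapsto c_0 = \sum_i b_i \otimes \bar{b}_i$, and $U_r$ is built by nested insertion of $U$'s (see Figure \ref{Omega-U}), a straightforward induction using that $F$ respects $\otimes$ and $\circ$ gives
\[
F(U_r)(1) = \sum_{i_1, \ldots, i_r} b_{i_1} \otimes b_{i_2} \otimes \cdots \otimes b_{i_r} \otimes \bar{b}_{i_r} \otimes \cdots \otimes \bar{b}_{i_1}.
\]
Writing the matrix entries of $L$ with respect to the tensor basis as $L(b_{i_1} \otimes \cdots \otimes b_{i_r}) = \sum_{\mathbf{j}} L^{j_1 \ldots j_r}_{i_1 \ldots i_r}\, b_{j_1} \otimes \cdots \otimes b_{j_r}$, so that $\Tr(L, V^{\otimes r}) = \sum_{\mathbf{i}} L^{i_1 \ldots i_r}_{i_1 \ldots i_r}$, I can apply $L \otimes \id_V^{\otimes r}$ to the expression above.

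Next I would apply $F(A_r)$, observing that by its nested definition, $F(A_r)$ successively pairs the $k$-th and $(2r+1-k)$-th tensor factors via $\hat{C}$, starting from the innermost pair $k = r$ and working outward. For the innermost pair this gives $\hat{C}(b_{j_r} \otimes \bar{b}_{i_r}) = (b_{j_r}, \bar{b}_{i_r})$. By definition of the dual basis $(\bar{b}_{i}, b_{j}) = \delta_{ij}$, and since the form is either symmetric ($\epsilon = 1$) or skew ($\epsilon = -1$), one has $(b_j, \bar{b}_i) = \epsilon(\bar{b}_i, b_j) = \epsilon \delta_{ij}$. Crucially, each successive pairing is of the form $(b_{j_k}, \bar{b}_{i_k})$ and so contributes the same factor $\epsilon \delta_{i_k, j_k}$; the nested structure of $A_r$ ensures that each outer pair, once the inner pairs have been collapsed, reduces to exactly such an expression.

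Collecting the $r$ factors of $\epsilon$ and the Kronecker deltas yields
\[
J(L) = F(A_r) \circ (L \otimes \id_V^{\otimes r}) \circ F(U_r)(1) = \epsilon^r \sum_{\mathbf{i}} L^{i_1 \ldots i_r}_{i_1 \ldots i_r} = \epsilon^r \Tr(L, V^{\otimes r}),
\]
and since $\epsilon^2 = 1$, this is equivalent to $\Tr(L, V^{\otimes r}) = \epsilon^r J(L)$. The specialisation to $L = F(D)$ is then immediate from the definition. The main obstacle is the careful bookkeeping of the $\epsilon$-signs in the symplectic case: one has to verify that every one of the $r$ pairings involves the arguments in the same order $(b_{j_k}, \bar{b}_{i_k})$ rather than the reversed order, so that all $r$ signs genuinely combine to $\epsilon^r$. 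This follows from the layered structure of $A_r$, but needs to be checked explicitly, which is easiest via a diagrammatic induction on $r$ or by the explicit formula for $A_r$ as an iterated composition.
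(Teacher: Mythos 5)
Your proof is correct and rests on the same computation as the paper's: each of the $r$ contractions $\hat C(b_{j_k}\otimes\bar b_{i_k})=(b_{j_k},\bar b_{i_k})=\epsilon\delta_{i_kj_k}$ contributes one factor of $\epsilon$, so the nested caps and cups compute $\epsilon^r\Tr(L)$. The only difference is organisational: the paper reduces by linearity to decomposable $L=L_1\otimes\cdots\otimes L_r$ and peels off one factor at a time via a partial-trace operator $\Gamma$ with $J=\Gamma^r$, whereas you carry out the full coordinate computation in one step; both are valid.
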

\begin{proof}
Let $L\in\End_K(V^{\ot r})$. Since \eqref{eq:jt} is linear in $L$, it suffices to
prove it for $L=L_1\ot\dots\ot L_r$, where $L_i\in\End_K(V)$ for each $i$. Now observe that
if we write $\Gamma:\End_K(V^{\ot i})\to \End_K(V^{\ot {(i-1)}})$ for the map
defined by
$$
\Gamma(M)=(\id_V^{\ot (i-1)}\ot F(A))\circ (M\ot \id_V)\circ (\id_V^{\ot (i-1)}\ot F(U)),
$$
then $J(L)=\Gamma^r(L)$. We therefore compute $\Gamma(L)$. We have
$$
\begin{aligned}
\Gamma(L)(v_1\ot\dots&\ot v_{r-1})\\
=&(\id_V^{\ot (r-1)}\ot F(A))\circ (L\ot \id_V)\circ (\id_V^{\ot (r-1)}\ot\check C)
(v_1\ot\dots\ot v_{r-1}\ot 1)\\
=&(\id_V^{\ot (r-1)}\ot F(A))\circ (L\ot \id_V)
(v_1\ot\dots\ot v_{r-1}\ot c_0)\\
=&(\id_V^{\ot (r-1)}\ot F(A))
(L_1v_1\ot\dots\ot L_{r-1}v_{r-1}\ot\sum_iL_rb_i\ot \bar b_i)\\
=&(\id_V^{\ot (r-1)}\ot \hat C)
(L_1v_1\ot\dots\ot L_{r-1}v_{r-1}\ot\sum_iL_rb_i\ot \bar b_i)\\
=&\sum_i(L_rb_i, \bar b_i)(L_1v_1\ot\dots\ot L_{r-1}v_{r-1})\\
=&\ep \Tr(L_r,V)(L_1v_1\ot\dots\ot L_{r-1}v_{r-1}).
\end{aligned}
$$
It follows that $\Gamma(L_1\ot\dots\ot L_r)=\ep\Tr(L_r,V)L_1\ot\dots\ot L_{r-1}$,
and hence by induction that $J(L)=\Gamma^r(L)=\ep^r\Tr(L,V^{\ot r})$. The result follows.
\end{proof}
\section{Theory of invariants of the orthogonal and symplectic groups.}
Henceforth we assume that $K$ is a field of characteristic zero.

\subsection{The fundamental theorems of invariant theory}

Let $G$ be either the orthogonal group $\Or(V)$ or the symplectic group $\Sp(V)$.
For any  $t\in\N$, the space $V^{\otimes t}$
is a $G$-module, and hence so is its dual space ${V^{\otimes t}}^* =\Hom_{K}(V^{\otimes t}, K)$.
The space of invariants $({V^{\otimes t}}^*)^G=\Hom_G(V^{\otimes t}, K)$
consists of linear functions on $V^{\otimes t}$ which are constant on $G$-orbits.
One formulation of the first fundamental theorem of classical invariant theory for
the orthogonal and symplectic groups \cite{W, GW} is as follows.

\begin{theorem}\label{thm:fft}
The space $({{V^{\otimes t}}^*})^G$ is zero if $t$ is odd.
If $t=2r$ is even, any element of $({{V^{\otimes t}}^*})^G$ is
a linear combination of maps of the form $\gamma_\alpha$ ($\alpha\in\Sym_{2r}$),
where
\begin{eqnarray}\label{eq:lnr-fctn}
\gamma_\alpha: v_1\otimes\dots\otimes v_{2r}\mapsto \prod_{i=1}^r( v_{\alpha(2i-1)}, v_{\alpha(2i)}),
\end{eqnarray}
\end{theorem}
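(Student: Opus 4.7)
The plan is to employ Weyl's classical polarization/restitution argument, splitting the analysis by the parity of $t$. The odd case is immediate: the scalar $-\id_V$ lies in both $\Or(V)$ and $\Sp(V)$ and acts as $(-1)^t$ on $V^{\otimes t}$, so when $t$ is odd every $G$-invariant linear functional must vanish identically.

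For $t=2r$ even, the inclusion $\supseteq$ is obvious, since each $\gamma_\alpha$ is built entirely from the $G$-invariant form $(-,-)$. For the opposite direction I would identify $\Hom_G(V^{\otimes 2r},K)$ with the space of $G$-invariant $2r$-multilinear functions $V\times\cdots\times V\to K$, and then use the polarization principle, valid in characteristic zero, to identify such multilinear invariants with the multihomogeneous component of multidegree $(1,1,\dots,1)$ inside the graded algebra $K[V^{\oplus 2r}]^G$ of polynomial invariants of $G$ acting diagonally on $V^{\oplus 2r}$. Concretely, every multilinear invariant arises as a partial derivative $\partial_{v_1}\cdots\partial_{v_{2r}}$ applied to an invariant polynomial in a single vector variable, or, equivalently, as the coefficient of $t_1 t_2 \cdots t_{2r}$ in $P(t_1 v_1 + \cdots + t_{2r} v_{2r})$ for some $P\in K[V]^G$.

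The crucial ingredient is the first fundamental theorem for \emph{polynomial} invariants of the classical groups (see, e.g., \cite{W, GW}): the algebra $K[V^{\oplus 2r}]^G$ is generated by the bracket functions $[i,j](v_1,\dots,v_{2r}):=(v_i,v_j)$, for $1\leq i,j\leq 2r$. Extracting the multidegree $(1,1,\dots,1)$ component of a monomial in the $[i,j]$ produces precisely a product of the form $\prod_{i=1}^{r} (v_{\alpha(2i-1)},v_{\alpha(2i)}) = \gamma_\alpha(v_1\otimes\cdots\otimes v_{2r})$ for some $\alpha\in\Sym_{2r}$, and summing over monomials shows that every multilinear invariant is a linear combination of the $\gamma_\alpha$. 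The main obstacle is the underlying polynomial FFT itself, which is the deep classical input; it is typically proved via the Capelli identity, Weyl's symbolic method, or Howe's theory of reductive dual pairs, and I would simply cite \cite{GW} rather than reproduce the argument here.
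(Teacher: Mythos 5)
The paper does not prove this statement at all: Theorem \ref{thm:fft} is quoted as the classical first fundamental theorem and attributed to \cite{W, GW}, so there is nothing internal to compare your argument against. Your outline is the standard classical derivation and is essentially sound. The odd case via $-\id_V\in G$ is correct, the inclusion $\supseteq$ is correct, and the main chain --- identify $\Hom_G(V^{\otimes 2r},K)$ with the multidegree $(1,\dots,1)$ component of $K[V^{\oplus 2r}]^G$, invoke the polynomial FFT to write every invariant as a polynomial in the brackets $[i,j]$, and observe that the multidegree $(1,\dots,1)$ bracket-monomials are exactly the $\gamma_\alpha$ --- is a complete reduction to the cited polynomial FFT. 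Note that the identification in the first step needs no polarization: a $2r$-multilinear function on $V^{\times 2r}$ \emph{is} a multidegree $(1,\dots,1)$ polynomial on $V^{\oplus 2r}$ (the field has characteristic zero, hence is infinite).

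One sentence of your proposal is false, though not load-bearing: the claim that every multilinear invariant arises as $\partial_{v_1}\cdots\partial_{v_{2r}}$ of an invariant $P\in K[V]^G$ of a \emph{single} vector variable, equivalently as the coefficient of $t_1\cdots t_{2r}$ in $P(t_1v_1+\cdots+t_{2r}v_{2r})$. That construction produces only \emph{symmetric} multilinear forms (it is the full polarization of the restitution $f(v,\dots,v)$), and it fails already for $r=1$ in the symplectic case, where $f(v_1,v_2)=(v_1,v_2)$ restitutes to $(v,v)=0$ while $f\neq 0$; indeed $K[V]^{\Sp(V)}=K$. The correct form of Weyl's polarization principle passes from invariants of $\dim V$ vector variables to invariants of arbitrarily many, not from a single variable. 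Since your argument in the following paragraph works directly with $K[V^{\oplus 2r}]^G$ and never actually uses this clause, the proof stands once that sentence is deleted.
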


Now $\Sym_{2r}$ evidently acts transitively on the set of $\gamma_\alpha$ through its action on
$V^{\ot r}$ by place permutations: for $\pi\in\Sym_{2r}$,
$\pi.\gamma_\alpha:=\gamma_\alpha\circ\pi\inv=\gamma_{\pi\alpha}$.
Moreover the centraliser $H$ in $\Sym_{2r}$ of
the involution $(12)(34)\dots(2r-1,2r)$, which is isomorphic to $\Sym_r\ltimes(\Z/2\Z)^r$,
clearly takes $\gamma_1$ to $\pm \gamma_1$. Hence if $\CT_r:=\Sym_{2r}/(\Sym_r\ltimes(\Z/2\Z)^r)$
is a left transversal of $\Sym_r\ltimes(\Z/2\Z)^r$ in $\Sym_{2r}$, it follows that
each function $\gamma_\alpha$ is equal to $\pm\gamma_\beta$, with $\beta\in\CT_r$,
and hence that

\begin{corollary}\label{cor:fft}
With notation as in Theorem \ref{thm:fft}, and writing $\CT_r$ for
the transversal above, $({{V^{\otimes t}}^*})^G$
is spanned by $\{\gamma_\alpha\mid\alpha\in\CT_r\}$.
\end{corollary}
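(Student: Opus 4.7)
The plan is to derive the corollary directly from Theorem \ref{thm:fft} by reducing the spanning set $\{\gamma_\alpha : \alpha \in \Sym_{2r}\}$ modulo the action of the subgroup $H := \Sym_r \ltimes (\Z/2\Z)^r$, using the observation (already recorded in the text preceding the corollary) that $H$ is the centraliser in $\Sym_{2r}$ of the involution $\iota = (12)(34)\cdots(2r-1,2r)$ and sends $\gamma_1$ to $\pm\gamma_1$. What needs to be verified is that every $\gamma_\alpha$ lies in the $K$-linear span of $\{\gamma_\beta : \beta \in \CT_r\}$.

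First I would invoke the action $\pi \cdot \gamma_\alpha = \gamma_{\pi\alpha}$ noted in the text. Given $\alpha \in \Sym_{2r}$, I would use the left coset decomposition $\Sym_{2r} = \coprod_{\beta \in \CT_r} \beta H$ to write $\alpha = \beta h$ uniquely with $\beta \in \CT_r$ and $h \in H$, so that
$$\gamma_\alpha(v_1 \otimes \cdots \otimes v_{2r}) = \prod_{i=1}^r (v_{\beta h(2i-1)}, v_{\beta h(2i)}).$$
Since $h$ centralises $\iota$, it preserves the pair partition $\bigl\{\{1,2\}, \{3,4\}, \ldots, \{2r-1,2r\}\bigr\}$, so there exists $\sigma \in \Sym_r$ with $\{h(2i-1), h(2i)\} = \{2\sigma(i)-1, 2\sigma(i)\}$ for each $i$, with $h$ possibly swapping the two indices within each such pair.

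Consequently each factor $(v_{\beta h(2i-1)}, v_{\beta h(2i)})$ equals $(v_{\beta(2\sigma(i)-1)}, v_{\beta(2\sigma(i))})$ up to a sign of $+1$ in the orthogonal case (symmetric form) and $\pm 1$ in the symplectic case, since swapping the arguments of a skew form flips the sign. After reindexing the product via $\sigma^{-1}$, this gives $\gamma_\alpha = \pm \gamma_\beta$, so $\gamma_\alpha$ lies in the $K$-span of $\{\gamma_\beta : \beta \in \CT_r\}$; combined with Theorem \ref{thm:fft} this yields the corollary. I do not anticipate any genuine obstacle: the only point requiring care is the sign bookkeeping in the symplectic case, which is immaterial for the spanning statement.
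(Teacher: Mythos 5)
Your proposal is correct and takes essentially the same route as the paper: the text preceding the corollary deduces it from Theorem~\ref{thm:fft} by exactly this left coset decomposition of $\Sym_{2r}$ with respect to the centraliser $H=\Sym_r\ltimes(\Z/2\Z)^r$ of $(12)(34)\cdots(2r-1,2r)$, asserting as ``clear'' that $H$ sends $\gamma_1$ to $\pm\gamma_1$. Your contribution is simply to spell out that sign computation (the permutation $\sigma$ of the blocks and the within-block swaps contributing $\epsilon=\pm1$), which is a faithful expansion of the paper's argument.
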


\begin{remark}\label{rem:diagbij}
Note that since $\CT_r:=\Sym_{2r}/(\Sym_r\ltimes(\Z/2\Z)^r)$ is evidently identified with
the set of all pairings of the elements of $\{1,2,\dots,2r\}$, $\CT_r$ is in bijection with
the diagrams in $B_r^r(\ep m)$.
\end{remark}

%\begin{definition}\label{def:S}
%Assume that $t=2r>m$. Let $S$ be a subset of $[1, t]:=\{1, 2, \dots, t\}$
%of cardinality $m+1$.
%Given $\gamma$ in $({V^{\otimes t}}^*)^G$ defined by \eqref{eq:lnr-fctn},
%we say that $S\subset [1, t]$ is {\em pertinent} for $\gamma$ either
%\begin{enumerate}
%\item the non-degenerate bilinear form $( -, -)$ is skew symmetric, or
%\item the cardinality of $\{i_\alpha, j_\alpha\}\cap S$ is smaller than $2$ for all $\alpha$ when
%the form is symmetric.
%\end{enumerate}
%\end{definition}
%\begin{remark}
%When $( -, -)$ is symmetric, there exists no pertinent $S$ for any $\gamma$
%unless $t> 2m$. See Remark \ref{rem:pertinent} for further discussion.
%\end{remark}

For any subset $S\subseteq [1,t]$,
let $\Sym(S)$ be the symmetric group of $S$, regarded as the subgroup of $\Sym_t$
which fixes all elements in $[1, t]\setminus S$.
%
%\begin{definition}\label{def:span-ker}
%Assume $t=2r>m$. Given $\gamma_\$ in $({{V^{\otimes t}}^*})^G$ defined by \eqref{eq:lnr-fctn}
%and a pertinent subset $S\subset [1, t]$, define linear functions on $V^{\otimes t}$ by
%\begin{eqnarray}
%\gamma_\pi: v_1\otimes\dots\otimes v_t\mapsto \prod_{\alpha=1}^r(v_{\pi(i_\alpha)}, v_{\pi(j_\alpha)})
%\end{eqnarray}
%for any $\pi\in \Sym_S\subset \Sym_t$, and let
%$
%\gamma_S=\sum_{\pi\in\Sym_S}(-1)^{|\pi|}\gamma_\pi.
%$
%\end{definition}
The next lemma provides some linear relations among the $\gamma_\alpha$.

\begin{lemma}\label{lem:gammainker} Let $S$ be any subset of $[1,t]$ with $|S|=m+1$.
Then for any $\gamma=\gamma_\alpha$ as in \eqref{eq:lnr-fctn}, we have
$\sum_{\pi\in\Sym(S)}(-1)^{|\pi|}\pi\gamma=0$. In particular, for $\alpha\in\CT_r$,
\be\label{eq:basicrel}
\sum_{\pi\in\Sym(S)}(-1)^{|\pi|}\gamma_{\alpha\pi}=0.
\ee
\end{lemma}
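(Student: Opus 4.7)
The underlying content is the trivial fact that $\Lambda^{m+1}V = 0$ whenever $\dim V = m$. The plan is to interpret the operator $T_S := \sum_{\pi \in \Sym(S)}(-1)^{|\pi|}\pi$, acting on $V^{\otimes t}$ by place permutation, as the antisymmetrizer on the $|S| = m+1$ tensor factors indexed by $S$ (combined with the identity on the remaining $t-m-1$ factors), and conclude that $T_S = 0$ as an endomorphism of $V^{\otimes t}$. Every other statement of the lemma will follow by dualising.

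First I would recall the place permutation action: for $v = v_1\otimes\dots\otimes v_t$ and $\pi\in\Sym_t$, one has $\pi\cdot v = v_{\pi^{-1}(1)}\otimes\dots\otimes v_{\pi^{-1}(t)}$. For $\pi\in\Sym(S)$ this moves only the factors at positions in $S$, so $T_S$ acts as $\mathrm{id}\otimes\cdots\otimes\mathrm{id}$ on the positions outside $S$, and as the standard antisymmetrizer of $V^{\otimes(m+1)}$ on the positions in $S$. The latter factors through the projection $V^{\otimes(m+1)}\to \Lambda^{m+1}V$, which vanishes since $m+1 > \dim V$. Hence $T_S = 0$ on $V^{\otimes t}$.

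Second, for the dual side: since $\gamma_\alpha\in(V^{\otimes t})^{*}$ is linear and $(\pi\gamma_\alpha)(v)=\gamma_\alpha(\pi^{-1}v)$, one computes
\[
\sum_{\pi\in\Sym(S)}(-1)^{|\pi|}(\pi\gamma_\alpha)(v) \;=\; \gamma_\alpha\!\left(\sum_{\pi\in\Sym(S)}(-1)^{|\pi|}\pi^{-1}v\right) \;=\; \gamma_\alpha(T_S\, v) \;=\; 0,
\]
where the first equality uses $(-1)^{|\pi|}=(-1)^{|\pi^{-1}|}$ and the change of summation variable $\pi \mapsto \pi^{-1}$. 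This is the first assertion of the lemma.

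Finally, for the explicit identity \eqref{eq:basicrel}, I would rewrite $\gamma_{\alpha\pi}$ directly from \eqref{eq:lnr-fctn} as $\gamma_{\alpha\pi}(v_1\otimes\dots\otimes v_t) = \gamma_\alpha(v_{\pi(1)}\otimes\dots\otimes v_{\pi(t)})$, so that
\[
\sum_{\pi\in\Sym(S)}(-1)^{|\pi|}\gamma_{\alpha\pi}(v_1\otimes\dots\otimes v_t) \;=\; \gamma_\alpha\!\left(\sum_{\pi\in\Sym(S)}(-1)^{|\pi|} v_{\pi(1)}\otimes\dots\otimes v_{\pi(t)}\right),
\]
and the inner sum is again the antisymmetrization over the factors indexed by $S$, which vanishes by the first paragraph. (Equivalently, one could invoke the identity $\pi\gamma_\alpha = \gamma_{\pi\alpha}$ from the paragraph preceding the lemma together with a reparametrization of $\Sym(S)$.) No genuine obstacle arises; the only thing requiring care is bookkeeping between left and right actions of $\Sym_t$ on indices and on linear functionals, all of which collapses to the change of variable $\pi\mapsto\pi^{-1}$.
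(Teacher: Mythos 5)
Your proof is correct and follows essentially the same route as the paper's: both arguments pull the alternating sum inside $\gamma_\alpha$ by linearity and observe that the resulting antisymmetrization over the $m+1$ positions in $S$ factors through $\Lambda^{m+1}(V)=0$ since $\dim V=m$. Your extra care with the $\pi\mapsto\pi^{-1}$ bookkeeping and the separate treatment of \eqref{eq:basicrel} only makes explicit what the paper leaves implicit.
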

\begin{proof}
For any $S\subset [1, t]$ of cardinality $m+1$ and $\gamma\in ({V^{\ot t}}^*)^G$, we have
\be\label{eq:zero}
\begin{aligned}
&\sum_{\pi\in\Sym(S)}(-1)^{|\pi|}\pi\gamma(v_1\otimes\dots\otimes v_t) \\
&=\sum_{\pi\in\Sym(S)}(-1)^{|\pi|}\gamma(\pi\inv(v_1\otimes\dots\otimes v_t))\\
&=\gamma(\sum_{\pi\in\Sym(S)}(-1)^{|\pi|}\pi\inv(v_1\otimes\dots\otimes v_t))\\
&=0,\\
\end{aligned}
\ee
since $\Sym(S)$ acts on $m+1$ positions, and therefore the alternating sum has a factor
which is an element of $\Lambda^{m+1}(V)$, which is zero since $m=\dim(V)$.
\end{proof}

\begin{remark}\label{rem:pertinent}

(i) When the form is symmetric, the inner sum in the third line of Equation \eqref{eq:zero}
may be zero for the trivial reason that an involution in $S$ might fix $\gamma$.
Thus some of the relations above are trivial in the orthogonal case.

(ii) Although $\alpha\pi$ may not be in $\CT_r$ above, it is always the case that
$\gamma_{\alpha\pi}=\pm\gamma_\beta$ for some $\beta\in\CT$. Thus the Lemma does
provide linear relations among the $\gamma_\alpha$ for $\alpha\in\CT_r$.
\end{remark}

The second fundamental theorem for the orthogonal and symplectic groups \cite{W}
may be stated as follows \cite{GW}.
\begin{theorem}\label{thm:sft}
Write $m=\dim( V)$ and let $d=m$ if $G=\Or(V)$, and $d=\frac{m}{2}$ if $G=\Sp(V)$.
%
%Let $d=\left\{\begin{array}{l l}m, & \text{if $G=\Or(V)$}\\
%\frac{m}{2}, & \text{if $G=\Sp(V)$}.\end{array}\right.$
%
If $r\le d$, the linear functions $\{\gamma_\alpha\mid\alpha\in\CT_r\}$
of Corollary \ref{cor:fft} form a basis
of the space of $G$-invariants on $V^{\otimes 2r}$. If $r> d$,
any linear relation among the functionals $\gamma_\alpha$ is a linear consequence
of the relations in Lemma \ref{lem:gammainker}.
\end{theorem}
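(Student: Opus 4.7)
My plan is to prove the two assertions of the theorem separately.

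For $r\leq d$, I would transfer the question to the Brauer algebra. Under the bijection of Remark~\ref{rem:diagbij} between $\CT_r$ and the diagram basis of $B_r^r(\epsilon m)$, combined with Corollary~\ref{isomorphism} and Lemma~\ref{lem:transfrom}, the linear map $\alpha\mapsto\gamma_\alpha$ from $K\CT_r$ to $\Hom_G(V^{\otimes 2r},K)$ is intertwined with $F_r^r:B_r(\epsilon m)\to\End_G(V^{\otimes r})$. Hence linear independence of the $\gamma_\alpha$ for $\alpha\in\CT_r$ is equivalent to the injectivity of $F_r^r$. For $r\leq d$, this is the classical Brauer--Schur--Weyl duality in the semisimple range, which I would verify by matching the decomposition of $V^{\otimes r}$ as a $G$-module with the cellular decomposition of $B_r(\epsilon m)$, or by a direct dimension count against the classical formula $\dim\Hom_G(V^{\otimes 2r},K)=(2r-1)!!$ in this range.

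For $r>d$, let $\Phi:K\CT_r\to\Hom_G(V^{\otimes 2r},K)$ denote the surjection $\alpha\mapsto\gamma_\alpha$, and let $R_r\subseteq K\CT_r$ be the $K$-subspace spanned by the relations \eqref{eq:basicrel} of Lemma~\ref{lem:gammainker}, rewritten in $\CT_r$-form via Remark~\ref{rem:pertinent}(ii). We have $R_r\subseteq\ker\Phi$, and the goal is to prove equality. I would transport the problem to $B_r(\epsilon m)$ via Lemma~\ref{lem:transfrom}: the claim becomes that the two-sided ideal $J_r\subseteq B_r(\epsilon m)$ generated by the images of the antisymmetrisers on $m+1$ nodes coincides with $\ker F_r^r$. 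The inclusion $J_r\subseteq\ker F_r^r$ is immediate from $\Lambda^{m+1}V=0$, which forces $F$ applied to any diagram containing such an antisymmetriser to vanish on $V^{\otimes 2r}$.

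The main obstacle is the reverse inclusion $\ker F_r^r\subseteq J_r$. I would approach it by a dimension count. On the one hand, $\dim\End_G(V^{\otimes r})$ is accessible via the classical representation theory of $G$: it equals the sum of squared multiplicities of those irreducible $G$-constituents of $V^{\otimes r}$ whose highest weights are partitions with at most $d$ parts. On the other hand, a spanning set for $B_r(\epsilon m)/J_r$ can be produced by ``straightening'' modulo the antisymmetrisers, eliminating any diagram whose underlying symmetric-group part (after separating the contraction structure) contains a fully antisymmetrised block of size $m+1$. Matching the two counts forces $J_r=\ker F_r^r$, which proves the theorem. An attractive alternative is to invoke the results of \cite{LZ4} and \cite{HX}, which give an explicit single generator of $\ker F_r^r$ as a two-sided ideal, and to verify that this generator lies in $J_r$ by direct diagrammatic manipulation. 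The dimension-theoretic or cellular-theoretic verification is, I expect, the main technical hurdle; a natural strategy is induction on $r-d$, with base case $r=d+1$ handled directly by the vanishing of $\Lambda^{m+1}V$.
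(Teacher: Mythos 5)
The paper does not prove this statement: Theorem \ref{thm:sft} is the classical second fundamental theorem of Weyl, quoted with citations to \cite{W} and \cite{GW}, and it is used as an \emph{input} to the paper's own results (the proof of Theorem \ref{thm:fft-sft}(2) explicitly invokes it). So there is no proof in the paper to compare yours against; what matters is whether your proposal would stand on its own, and as written it would not.

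Two concrete problems. First, circularity. For $r\le d$ you propose to verify linear independence of the $\gamma_\alpha$ ``by a direct dimension count against the classical formula $\dim\Hom_G(V^{\otimes 2r},K)=(2r-1)!!$.'' Since the $\gamma_\alpha$ span by the FFT and $|\CT_r|=(2r-1)!!$, that dimension formula is \emph{equivalent} to the independence you are trying to prove; it cannot serve as a verification. The genuine content here (Brauer's semisimplicity result, or Weyl's argument via the Capelli identity, or a cellular/Gram-determinant computation) is exactly what is left unsketched. Similarly, for $r>d$ your ``attractive alternative'' of invoking \cite{LZ4}, \cite{HX}, or equivalently the paper's Theorems \ref{thm:sp-main} and \ref{thm:o-main}, is circular relative to this paper's logical architecture: those endomorphism-algebra statements are derived (here) from Theorem \ref{thm:sft} via Theorem \ref{thm:fft-sft} and Propositions \ref{lem:sp-generat}, \ref{lem:o-generat}.

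Second, the translation between the two formulations is not as automatic as you assert. The span $R_r$ of the relations \eqref{eq:basicrel} sits naturally in $B_{2r}^0(\epsilon m)$ and corresponds, under the raising/lowering isomorphisms of Corollary \ref{isomorphism}, to the subspace $\langle\Sigma_\epsilon(m+1)\rangle_{2r}^0$ generated categorically by composition \emph{and tensor product} -- not simply to a two-sided ideal $J_r$ of the algebra $B_r(\epsilon m)$. The paper is careful about this distinction (see the remark following Definition \ref{def:ideal}), and Proposition \ref{lem:sp-generat} does real diagrammatic work to convert the categorical span into algebra-ideal generators. Your ``straightening modulo antisymmetrisers'' spanning set for $B_r(\epsilon m)/J_r$ and the matching multiplicity count for $\End_G(V^{\otimes r})$ (which, in the orthogonal case, involves partitions $\lambda$ with $\lambda_1'+\lambda_2'\le m$, not merely at most $d$ parts) are each substantial theorems in their own right, not routine verifications. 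In short, the proposal correctly identifies where the difficulty lies but does not close it, and its fallback routes either assume the conclusion or invert the paper's logical order.
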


\subsection{Categorical generalisations of the fundamental theorems}

We now return to the category $\cB(\epsilon m)$ of Brauer diagrams with parameter $\epsilon m$ (where
$\epsilon = \epsilon(G)$) and the covariant functor $F: \cB(\epsilon m)\longrightarrow \cT_G(V)$.
Recall that the group algebra $K\Sym_r$ is embedded in the Brauer algebra $B_r(\epsilon m)$
of degree $r$. In particular, $\Sigma_\epsilon(r)$ belongs to $B_r^r(\epsilon m)$.
Let $\phi_r=F(\Sigma_\epsilon(r))\in\End_G(V^{\otimes r})=H_r^r$. Then for any
$r$ vectors $v_i$ in $V$,
\[
\phi_r(v_1\otimes v_2\otimes \dots \otimes v_r)
= \sum_{\sigma\in \Sym_r}(-1)^{|\sigma|}v_{\sigma(1)}
\otimes v_{\sigma(2)}\otimes \dots \otimes v_{\sigma(r)}.
\]
In particular, if $r=m+1$, then $\phi_r=0$ as an element in $H_{m+1}^{m+1}$.

\begin{definition}
Denote by $\langle \Sigma_\epsilon(m+1)\rangle$ the subspace of $\oplus_{k,\ell}B_k^\ell(\epsilon m)$
spanned by the morphisms
in $\cB(\epsilon m)$ obtained from $\Sigma_\epsilon(m+1)$ by composition and tensor product.
Set $\langle \Sigma_\epsilon(m+1)\rangle_k^\ell =
\langle \Sigma_\epsilon(m+1)\rangle\cap B_k^\ell(\epsilon m)$.
\end{definition}

The first and second fundamental theorems of classical invariant theory for
the orthogonal and symplectic groups
can be respectively interpreted as parts (1) and (2) of the following theorem.
\begin{theorem} \label{thm:fft-sft} Assume that $K$ has characteristic $0$ and
write $d=m$ if $G=\Or(V)$, and $d=\frac{m}{2}$ if $G=\Sp(V)$, where $m=\dim( V)$.
\begin{enumerate}
\item  The functor $F: \cB(\epsilon m)\longrightarrow \cT_G(V)$ is full.
That is, $F$ is surjective on $\Hom$ spaces.
\item
The map ${F}_k^\ell$ is injective if $k+\ell\le 2d$, and
$\Ker{F}_k^\ell=\langle \Sigma_\epsilon(m+1)\rangle_k^\ell$
if $k+\ell> 2d$.
\end{enumerate}
\end{theorem}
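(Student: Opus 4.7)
The plan is to reduce both parts to the case $\ell = 0$ via the isomorphisms $\mathbb{U}$ and $\mathbb{A}$ of Lemma~\ref{lem:transfrom}, and then translate the statements into the classical FFT (Theorem~\ref{thm:fft}) and SFT (Theorem~\ref{thm:sft}). When $\ell = 0$ and $k = 2r$, the basis of $B_{2r}^0(\epsilon m)$ is naturally in bijection with $\CT_r$ (Remark~\ref{rem:diagbij}), and the formulas $F(A) = \hat{C}$, $F(U) = \check{C}$ show that the Brauer diagram $D_\alpha$ attached to a pairing $\alpha$ satisfies $F(D_\alpha) = \pm\,\gamma_\alpha$ in $\Hom_G(V^{\ot 2r}, K)$.

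For part~(1), the isomorphisms $F\mathbb{U}$ and $F\mathbb{A}$ of Lemma~\ref{lem:transfrom} intertwine $F_k^\ell$ with $F_{2r}^0$ (where $2r = k + \ell$), so fullness reduces to the surjectivity of $F_{2r}^0$. By Corollary~\ref{cor:fft}, the target $\Hom_G(V^{\ot 2r}, K)$ is spanned by $\{\gamma_\alpha \mid \alpha \in \CT_r\}$, each of which lies in the image by the identification above.

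For part~(2), the same reduction, combined with the linear independence of $\{\gamma_\alpha\}$ for $r \le d$ from Theorem~\ref{thm:sft}, gives the injectivity of $F_k^\ell$ when $k + \ell \le 2d$. The inclusion $\langle \Sigma_\epsilon(m+1)\rangle_k^\ell \subseteq \Ker F_k^\ell$ follows from the key calculation
\[
F(\Sigma_\epsilon(m+1)) \;=\; \sum_{\sigma \in \Sym_{m+1}} (-\epsilon)^{|\sigma|}\epsilon^{|\sigma|} P_\sigma \;=\; \sum_{\sigma} (-1)^{|\sigma|} P_\sigma,
\]
which is the antisymmetriser on $V^{\ot(m+1)}$ and hence vanishes because $\Lambda^{m+1}(V) = 0$; since $F$ is a tensor functor, it annihilates the entire subspace $\langle \Sigma_\epsilon(m+1)\rangle$.

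The reverse inclusion $\Ker F_k^\ell \subseteq \langle \Sigma_\epsilon(m+1)\rangle_k^\ell$ is the main obstacle. The subspace $\langle \Sigma_\epsilon(m+1)\rangle$ is by construction closed under $\circ$ and $\ot$ with arbitrary Brauer diagrams, hence is preserved by $\mathbb{U}$ and $\mathbb{A}$; this reduces the inclusion to the case $\ell = 0$. There, Theorem~\ref{thm:sft} identifies $\Ker F_{2r}^0$ with the span of the Lemma~\ref{lem:gammainker} relations $\sum_{\pi \in \Sym(S)}(-1)^{|\pi|}\gamma_{\alpha\pi}$, indexed by $(\alpha,S)$ with $\alpha \in \CT_r$ and $|S| = m+1$. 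The crux is to exhibit, for each such relation, an explicit preimage in $\langle \Sigma_\epsilon(m+1)\rangle_{2r}^0$. The natural candidate is
\[
D_\alpha \circ \widetilde{\tau}^{-1} \circ \bigl(\Sigma_\epsilon(m+1) \ot I^{\ot(2r - m - 1)}\bigr) \circ \widetilde{\tau},
\]
where $\widetilde{\tau}$ is a permutation diagram moving the positions of $S$ to the leftmost $m+1$ strands. By construction this lies in $\langle \Sigma_\epsilon(m+1)\rangle_{2r}^0$, and the displayed formula for $F(\Sigma_\epsilon(m+1))$ identifies its image under $F$ with $\gamma_\alpha$ postcomposed with the antisymmetriser acting on the positions in $S$, which is the target relation up to sign. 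Careful bookkeeping of signs in the symplectic case (where $F(X) = -P$) is the main technical subtlety, but the cancellation $(-\epsilon)^{|\sigma|}\epsilon^{|\sigma|} = (-1)^{|\sigma|}$ visible above ensures the construction works uniformly for both $\Or(V)$ and $\Sp(V)$.
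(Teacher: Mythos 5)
Your proposal is correct and follows essentially the same route as the paper: reduction to the case $\ell=0$, $k=2r$ via Lemma~\ref{lem:transfrom}, surjectivity from Corollary~\ref{cor:fft} and the bijection of Remark~\ref{rem:diagbij}, one inclusion from $F(\Sigma_\epsilon(m+1))=0$, and the reverse inclusion by realising each relation of Lemma~\ref{lem:gammainker} as $F$ applied to a conjugate of $\Sigma_\epsilon(m+1)\ot I^{\ot(2r-m-1)}$ by a permutation diagram, exactly as the paper's element $D_S=D\circ\sigma\circ\Sigma_\epsilon(m+1)\circ\sigma\inv$. Your explicit tracking of the sign cancellation $(-\epsilon)^{|\sigma|}\epsilon^{|\sigma|}=(-1)^{|\sigma|}$ is precisely why the paper defines $D_S$ with coefficients $(-\epsilon)^{|\pi|}$.
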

\begin{proof}
It follows from Lemma \ref{lem:transfrom} that we have a canonical
isomorphism $B_k^\ell\simeq B_{k+\ell}^0$, and the
study of $F_k^\ell$ is equivalent to that of $F_{k+\ell}^0$. Hence without loss of
generality, we may assume that $\ell=0$. When $\ell=0$,
the theorem is true trivially when $k$ is odd.
Thus we only need to consider the case $\ell=0$ and $k=2r$.

(1). By Corollary \ref{cor:fft}, every element of $H_{2r}^0$ is a linear
combination of functionals $\gamma_\alpha$ for $\alpha\in\CT_r$. As remarked in
Remark \ref{rem:diagbij}, the elements of $\CT_r$ are in canonical bijection
with pairings of the set $[1,2r]$, i.e. the partitioning of $[1,2r]$ into
a disjoint union of pairs. Let $D$ be the diagram corresponding to $\alpha\in\CT_r$.
Then $F(D)=\gamma_\alpha$.
Thus $F_{2r}^0$ is surjective, and so is also $F_k^\ell$ for all $k$ and $\ell$.
This proves part (1) of the theorem.

(2). Note that every $(2r, 0)$ Brauer diagram is mapped by $F$ to a $\gamma_\alpha$ of
the form \eqref{eq:lnr-fctn}. Thus if $r\le d$, then $\Ker{F}_{2r}^0=0$
by Theorem \ref{thm:sft}, the second fundamental theorem.

Now consider the case $r>d$. By Theorem \ref{thm:sft} it suffices
to show that every relation of the form \eqref{eq:basicrel} arises by applying
$F_{2r}^0$ to an element of $\langle \Sigma_\epsilon(m+1)\rangle_{2r}^0$.
Fix $\alpha\in\CT_r$, and let $D\in B_{2r}^0$ be the diagram such that $F(D)=\gamma_\alpha$.
This is the diagram corresponding to $\alpha\in\CT_r$ by Remark \ref{rem:diagbij}.

Write $\gamma_\alpha(S)=\sum_{\pi\in\Sym(S)}(-1)^{|\pi|}\gamma_{\alpha\pi}$ for the
left side of \eqref{eq:basicrel}.

If $\sigma\in\Sym_{2r}$ satisfies $\{\sigma([1,m+1])\}=S$,
then $\Sym(S)=\sigma \Sym({[1, m+1]}) \sigma\inv$. Now regard $\Sym_{2r}$ as
embedded in $B_{2r}^{2r}(\epsilon m)$, and define the element
$$
D_S:= \sum_{\pi\in \Sym({[1, m+1]})} (-\epsilon)^{|\pi|}D\circ\sigma\circ \pi\circ \sigma\inv
$$
in $B_{2r}^0(\epsilon m)$.
Then $\gamma_\alpha(S)=F(D_S)$, and since we have
\[
\begin{aligned}
D_S &=D\circ\sigma\circ\Sigma_\epsilon(m+1)\circ\sigma\inv
&\in \langle \Sigma_\epsilon(m+1) \rangle_{2r}^0,
\end{aligned}
\]
it follows that $\gamma_\alpha(S)\in F(\langle \Sigma_\epsilon(m+1) \rangle_{2r}^0)$.

By Theorem \ref{thm:sft}, all relations among invariant functionals on $V^{\otimes 2r}$
are linear consequences of the relations $\gamma_\alpha(S)=0$.  Using the bijection
between diagrams and $\CT_r$, it follows that $\Ker{F}_{2r}^0$ is spanned
by elements of the form $D_S$.

Conversely, it is evident that
$\langle \Sigma_\epsilon(m+1) \rangle_{2r}^0\subset \Ker{F}_{2r}^0$ since $\phi_{m+1}=F(\Sigma_\epsilon(m+1))=0$.
This proves part (2) for $r>d$, completing the proof of the Theorem.
\end{proof}

\begin{remark}
Theorem \ref{thm:fft-sft}(1) with $k=2r,\ell=0$ yields the linear version of FFT,
while the endomorphism algebra formulation arises from the case $k=\ell=r$. The equivalence of
the two versions is an obvious consequence of Lemma \ref{lem:transfrom}.
\end{remark}

\begin{corollary}\label{cor:small-degree}
If $k+\ell\le 2d$, then $\langle\Sigma_\epsilon(m+1)\rangle_k^\ell=0$.
\end{corollary}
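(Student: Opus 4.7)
The plan is to deduce the corollary directly from Theorem \ref{thm:fft-sft}(2), which asserts that $F_k^\ell$ is injective when $k+\ell \le 2d$. The strategy is simply to show that $\langle \Sigma_\epsilon(m+1)\rangle_k^\ell$ is always contained in $\Ker F_k^\ell$, regardless of the sizes of $k,\ell$; combined with injectivity in the stated range, this forces the subspace to vanish.

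First I would recall that $\phi_{m+1}=F(\Sigma_\epsilon(m+1))=0$ in $H_{m+1}^{m+1}$, as noted just before the definition of $\langle \Sigma_\epsilon(m+1)\rangle$. Since by Theorem \ref{thm:functor} the functor $F$ respects both composition and tensor product of Brauer diagrams, any morphism obtained from $\Sigma_\epsilon(m+1)$ by iterated composition and tensor product with other diagrams is mapped by $F$ to a composition or tensor product in $\cT_G(V)$ one of whose factors is $0$, hence is itself zero. Passing to $K$-linear spans, we conclude that
\[
\langle \Sigma_\epsilon(m+1)\rangle_k^\ell \subseteq \Ker F_k^\ell
\]
for every pair $(k,\ell)$.

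Now assume $k+\ell \le 2d$. By Theorem \ref{thm:fft-sft}(2), $F_k^\ell$ is injective, so $\Ker F_k^\ell = 0$. Combining with the inclusion above yields $\langle \Sigma_\epsilon(m+1)\rangle_k^\ell = 0$, as required.

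There is essentially no obstacle: the only point that deserves a moment's thought is the closure observation that composing or tensoring any morphism with a zero morphism in a pre-additive category with bi-functor $\otimes$ yields zero, which is immediate from $K$-bilinearity of $\circ$ and $\otimes$. Everything else is just quoting Theorem \ref{thm:fft-sft}(2).
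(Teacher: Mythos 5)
Your proposal is correct and is essentially identical to the paper's own proof: both observe that $\phi_{m+1}=F(\Sigma_\epsilon(m+1))=0$ forces $\langle\Sigma_\epsilon(m+1)\rangle_k^\ell\subseteq\Ker F_k^\ell$, and then invoke the injectivity of $F_k^\ell$ for $k+\ell\le 2d$ from Theorem \ref{thm:fft-sft}(2). Your extra remark about functoriality of $F$ under $\circ$ and $\otimes$ just makes explicit the step the paper leaves implicit.
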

\begin{proof}
Since $\phi_{m+1}=0$, $\langle \Sigma_\epsilon(m+1)\rangle_k^\ell$ is contained in $\Ker{F}_k^\ell$. But
$\Ker{F}_k^\ell=0$ for $k+\ell\le 2d$, and the lemma follows.
\end{proof}

\section{Structure of the endomorphism algebra: the symplectic case}

Recall from Section \ref{sect:Brauer-algebra} that $B_r^r(\epsilon m)$ is the Brauer algebra of degree $r$.
Thus
$\Ker{F_r^r}$ is a two-sided ideal of $B_r^r(\epsilon m)$, and $B_r^r(\epsilon m)/Ker{F}_r^r$
is canonically isomorphic to the endomorphism algebra $\End_G(V^{\ot r})$ by Theorem \ref{thm:fft-sft}(2).
In order to understand the algebraic structure of $\End_G(V^{\ot r})$, we need to understand that
of $Ker{F}_r^r$, and this is what we shall do in this section and the next section.

Here we take $G=\Sp(V)$ with $\dim V = 2n$ and $\epsilon=-1$. Denote $\Sigma_{-1}(r)$ by $\Sigma(r)$.

\subsection{Generators of the kernel}

For any $s<r$, there is a natural embedding $B_s^s(-2n)\hookrightarrow B_r^r(-2n)$,
$b\mapsto b\ot I_{r-s}$,
of the Brauer algebra of degree $s$ in that of degree $r$ as associative algebras. Thus we may
regard $B_s^s(-2n)$ as the subalgebra of $B_r^r(-2n)$ consisting of elements of the form
$b\ot I_{r-s}$.

Let $D(p, q)$ denote the element of the Brauer algebra $B_k^k(-2n)$ of degree $k=2n+1-p+q$ shown in
Figure \ref{Dpq}.

\begin{figure}[h]
\begin{center}
\begin{picture}(100, 60)(0,0)
\put(-50, 30){$D(p, q)=$}
\put(5, 40){\line(0, 1){20}}
\put(6, 50){...}
\put(18, 40){\line(0, 1){20}}

\put(30, 58){\tiny${p-q}$}
\qbezier(22, 40)(36, 70)(52, 40)
\put(35, 52){.}
\put(35, 50){.}
\put(35, 48){.}
\qbezier(30, 40)(36, 54)(44, 40)

\qbezier(58, 40)(100, 90)(108, 0)
\qbezier(72, 40)(95, 70)(97, 0)
\put(97, 7){...}
\put(98, -5){\tiny$q$}

\put(0, 20){\line(1, 0){80}}
\put(0, 20){\line(0, 1){20}}
\put(80, 20){\line(0, 1){20}}
\put(0, 40){\line(1, 0){80}}
\put(30, 28){\tiny${2n+1}$}

\put(5, 20){\line(0, -1){20}}
\put(20, 10){...}
\put(50, 20){\line(0, -1){20}}
%\put(15, -5){\tiny i}

\qbezier(58, 20)(95, -30)(108, 60)
\qbezier(72, 20)(95, -10)(97, 60)
\put(97, 53){...}
\put(98, 60){\tiny$p$}
\end{picture}
\end{center}
\caption{}
\label{Dpq}
\end{figure}

\begin{proposition}\label{lem:sp-generat}
Assume that $r>n$. As a two-sided ideal of the Brauer algebra $B_r^r(-2n)$,
$\Ker{F}_r^r$  is generated by
$D(p, q)$ and $\ast D(p, q)$ with $p+q\le r$ and $p\le n$.
\end{proposition}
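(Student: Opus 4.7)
The containment $\supseteq$ is immediate: each $D(p,q)$ (and $*D(p,q)$) contains $\Sigma(2n+1)$ as a subdiagram, namely the central box in Figure \ref{Dpq}; hence $D(p,q) \in \langle \Sigma(2n+1)\rangle_r^r = \ker F_r^r$ by Theorem \ref{thm:fft-sft}(2), and the two-sided ideal generated by these elements is contained in $\ker F_r^r$.

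For the reverse containment, I would begin by invoking Theorem \ref{thm:fft-sft}(2) to identify $\ker F_r^r$ with $\langle \Sigma(2n+1)\rangle_r^r$. An arbitrary element of this subspace is a linear combination of morphisms built from $\Sigma(2n+1)$ and other Brauer diagrams via $\circ$ and $\otimes$; using the relations of Theorem \ref{thm:presentation} and the properties of the tensor product, such a morphism may be written as
\[
y = T \circ \bigl(\Sigma(2n+1)\otimes I_s\bigr) \circ T'
\]
for some $s \ge 0$ and Brauer diagrams $T: 2n+1+s \to r$, $T': r \to 2n+1+s$. Expressions with multiple copies of $\Sigma(2n+1)$ reduce to this form by induction, as they already lie in the ideal generated by single-$\Sigma$ expressions.

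The core combinatorial step is to classify such $y$ by two parameters. Let $p$ denote the number of nodes at the bottom of $\Sigma(2n+1)$ that, upon evaluating the composition $T \circ (\cdots) \circ T'$ as a scaled Brauer diagram, are paired with top-external nodes of $y$; let $q$ be the analogous count for top-of-$\Sigma$ nodes paired with bottom-external nodes. The remaining $\Sigma$-nodes either pass through via the $I_s$ factor or form arcs among themselves, yielding $p-q$ arc pairs on one side. Using the fact that $\Sigma(2n+1)$ is invariant up to sign under $\Sym_{2n+1}$-conjugation, the $\Sigma$-strand positions may be freely permuted; by left- and right-multiplying $y$ by suitable elements of $B_r^r(-2n)$, I would then bring $y$ to the form $b \cdot D(p,q) \cdot b'$ or $b \cdot *D(p,q) \cdot b'$, the choice reflecting whether the wrap-around strands exit on the right or the left of the diagram. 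The constraint $p+q \le r$ reflects the geometric need for enough external nodes to carry the wrap-arounds; the constraint $p \le n$ is arranged using the symmetry between $D$ and $*D$ together with the relations of Lemma \ref{lem:Sigma-1}, which allow rewriting $D(p,q)$ with $p > n$ in terms of $D(p',q')$ with $p' \le n$.

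The main obstacle will be the rigorous combinatorial reduction --- verifying that every routing of the $\Sigma(2n+1)$-strands through the surrounding Brauer diagrams can indeed be normalized to the specific form of $D(p,q)$ or $*D(p,q)$ by left-right multiplication with $B_r^r(-2n)$, and that no further generator types are required. The symmetries encoded in Lemmas \ref{lem:Sigma-1} and \ref{lem:Sigma}, which describe how $\Sigma(2n+1)$ interacts with cups, caps and crossings, will be essential for collapsing the various cases to the standard form.
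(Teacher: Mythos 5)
Your strategy does coincide with the paper's: both arguments reduce to normalizing morphisms containing a single copy of $\Sigma(2n+1)$, classify the result by the two wrap-around parameters $p,q$, and use the anti-involution $\ast$ together with conjugation by permutations to force $p\le n$. The $\supseteq$ containment and the reduction of multi-$\Sigma$ words to single-$\Sigma$ words are unproblematic. The difficulty is that the step you defer as ``the main obstacle'' is not a technical verification at the end --- it is the entire content of the proof. Note also that in your normal form $y=T\circ(\Sigma(2n+1)\otimes I_s)\circ T'$ the factors $T,T'$ are $(2n+1+s,r)$- and $(r,2n+1+s)$-diagrams, not elements of $B_r^r(-2n)$, so ``left- and right-multiplying $y$ by suitable elements of $B_r^r(-2n)$'' is not available as stated: one must actually show that the composite, evaluated as a scaled $(r,r)$-diagram with the symmetrizer inserted, factors as $b\circ\bigl(D_1\otimes D(p,q)\bigr)\circ b'$ (or the $\ast$-version) with $b,b'\in B_r^r(-2n)$ and $D_1$ some diagram on the remaining strands.

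The paper makes this tractable by not starting from an arbitrary element of $\langle\Sigma(2n+1)\rangle_r^r$. It invokes the linear SFT (Theorem \ref{thm:sft}) to get the explicit spanning set $A_S=\sum_{\pi\in\Sym_S}A\circ\pi$ of $\Ker F_{2r}^0$, with $A$ a single $(2r,0)$-diagram and $|S|=2n+1$, and transports it to $\Ker F_r^r$ via the bijection of Lemma \ref{lem:transfrom}(2). It then conjugates by an element of $\Sym_r\times\Sym_r$ to make $S$ an interval $\{i+1,\dots,i+2n+1\}$ and to put $A$ in the standard form of Figure \ref{Asigma}, and carries out a case analysis: on whether the interval crosses the fold at position $r$ (if $i+2n+1\le r$ the symmetrizer sits horizontally as $I_{s_1}\otimes\Sigma(2n+1)\otimes I_{s_2}$; if not, it bends around the corner and produces the wrapped strands of Figure \ref{Dpq-1}), and on the numbers $t,t'$ of arcs of $A$ lying entirely to the left or right of $S$. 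This is the argument your outline must supply. Finally, your appeal to Lemmas \ref{lem:Sigma-1} and \ref{lem:Sigma} at this stage is misplaced --- those computations enter later, in the analysis of $\Phi$; the tools actually used here are the $\Sym_{2n+1}$-invariance of $\Sigma(2n+1)$, conjugation by the permutation diagrams $X_{s,t}$ of Figure \ref{X}, and the anti-involution $\ast$.
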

\begin{proof}
Let $A$ be a single $(2r, 0)$ Brauer diagram with $r>n$. Then $F(A)$ is some functional $\gamma$
on $V^{\ot 2r}$  defined by \eqref{eq:lnr-fctn}.  For any $\pi\in\Sym_{2r}\subset B_{2r}^{2r}(-2n)$,
$A\circ \pi$ is defined. Note that $A$ has only one row of vertices at the bottom,
which will be labelled $1, 2, \dots, 2r$ from left to right.
Choose a subset $S$ of $[1, 2r]$
of cardinality $2n+1$ as in Lemma \ref{lem:gammainker}, and consider
$\Sym_S\subset \Sym_{2r}\subset B_{2r}^{2r}(-2n)$. Define
\begin{eqnarray}\label{eq:AS}
A_S=\sum_{\pi\in\Sym_S} A\circ\pi.
\end{eqnarray}
Then by Theorem \ref{thm:sft}, and equivalently
Theorem \ref{thm:fft-sft}(2),
$\Ker{F}_{2r}^0$ is spanned by $A_S$ for all $A$ and $S$. Given $A_S$, we define
\[
A_S^\natural=A_S\circ(I_r\otimes U_r)\in B_r^r(-2n).
\]
Then $\Ker{F}_{r}^r$ is spanned by $A_S^\natural$ for all $A$ and $S$ by Lemma \ref{lem:transfrom}(2).

We can considerably simplify the description of $\Ker{F}_{2r}^0$ and $\Ker{F}_{r}^r$.
There exist elements $\sigma=(\sigma_1, \sigma_2)$ in the parabolic subgroup $\Sym_r\times\Sym_r$ of $\Sym_{2r}$, which map $S$ to $S'=\{i+1, i+2, \dots, i+2n+1\}\subset[1, 2r]$ for  some $i\le 2r-2n-1$. Let $\sigma_2^{-\tau}=\ast(\sigma_2^{-1})$, where $\ast$ is the anti-involution of $B_r^r(-2n)$. Then
\begin{eqnarray}
&\sigma_2^{-\tau}\circ A_S^\natural\circ \sigma_1^{-1}=(A_S\circ \sigma^{-1})^\natural,\nonumber\\
&A_S\circ \sigma^{-1}=\sum_{\pi\in\Sym_{S'}}(A\circ\sigma^{-1})\circ\pi. \label{eq:symmS}
\end{eqnarray}

By appropriately choosing $\sigma$, we can ensure that $A\circ\sigma^{-1}$ is of the
form shown in Figure \ref{Asigma}.
\begin{figure}[h]
\begin{center}
\begin{picture}(290, 60)(0,0)
\qbezier(-3, 0)(140, 106)(283, 0)
\qbezier(15, 0)(140, 85)(265, 0)
\put(140, 50){.}
\put(140, 48){.}
\put(140, 46){.}

\qbezier(25, 0)(35, 15)(35, 0)
\qbezier(50, 0)(60, 35)(65, 0)
\put(40, 3){...}\put(42, -4){\tiny$t$}

\qbezier(215, 0)(220, 35)(230, 0)
\qbezier(245, 0)(245, 15)(255, 0)
\put(232, 3){...}\put(234, -4){\tiny$t'$}

\qbezier(70, 0)(100, 65)(120, 0) \put(118, -2){$\bullet$}
\qbezier(85, 0)(110, 65)(155, 0)\put(153, -2){$\bullet$}
\put(75, 3){...}

\qbezier(145, 0)(155, 70)(210, 0)
\qbezier(135, 0)(155, 60)(195, 0)
\put(133, -2){$\bullet$}\put(143, -2){$\bullet$}
\put(125, 3){...}
\put(196, 3){...}

\qbezier(105, 0)(135, 50)(170, 0)
\put(103, -2){$\bullet$}\put(168, -2){$\bullet$}
\put(90, 3){...}
\put(176, 3){...}
\end{picture}
\end{center}
\caption{}
\label{Asigma}
\end{figure}
The vertices labeled by $\bullet$ are those in $S'$, which all appear in the middle, and the other vertices
all appear at the left end and right end. Here $t$ denotes the number of edges in $A\circ \sigma^{-1}$
with both vertices in $\{1, 2, \dots, i\}$,
and $t'$ that of the edges with both vertices in $\{i+2n+2, i+2n+3, \dots, 2r\}$.
Note that after such a $\sigma$ is chosen, $\pi\in \text{Sym}_{S'}$ acting on $A\circ \sigma^{-1}$
permutes only vertices labeled by $\bullet$.  Thus every term on the right hand side of \eqref{eq:symmS}
is of the form Figure \ref{Asigma} with the same $t$ and $t'$.

Now $(A_S\circ \sigma^{-1})^\natural$ can be expressed as
$D_1\otimes D_2$, where $D_1\in B_{r_1}^{r_1}(-2n)$ for $r_1$ maximal, $D_2\in B_k^k(-2n)$
with $k>n$ satisfying $r_1+k=r$.
There are several possibilities for $D_2$ depending on
$i$, $t$ and $t'$. Assume $i+2n+1> r$.  If
$t=t'$, then $D_2$ is as shown in Figure \ref{Dpq-1}.
\begin{figure}[h]
\begin{center}
\begin{picture}(100, 60)(-20,0)

\put(0, 20){\line(1, 0){80}}
\put(0, 20){\line(0, 1){20}}
\put(80, 20){\line(0, 1){20}}
\put(0, 40){\line(1, 0){80}}
\put(30, 28){\tiny${2n+1}$}

\qbezier(5, 40)(-15, 60)(-18, 0)
\qbezier(18, 40)(-20, 80)(-32,0)
\put(-28, 10){...}
\put(-28, 0){\tiny$q$}

\put(30, 58){\tiny${p-q}$}
\qbezier(22, 40)(36, 70)(52, 40)
\put(35, 52){.}
\put(35, 50){.}
\put(35, 48){.}
\qbezier(30, 40)(36, 54)(44, 40)

\put(58, 40){\line(0, 1){20}}
\put(73, 40){\line(0, 1){20}}
\put(61, 50){...}

\put(5, 20){\line(0, -1){20}}
\put(20, 10){...}
\put(50, 20){\line(0, -1){20}}
%\put(15, -5){\tiny i}

\qbezier(55, 20)(93, -30)(103, 60)
\qbezier(70, 20)(90, -10)(91, 60)
\put(92, 53){...}
\put(95, 60){\tiny$p$}
\end{picture}
\end{center}
\caption{}
\label{Dpq-1}
\end{figure}
If $t<t'$, then $D_2=E\circ(I_s\otimes D_3)$ for some $s$, where $D_3$ is as shown in Figure \ref{Dpq-1},
and $E$ is the product of some $e_i$'s composed with a permutation in
$\Sym_{2n+1+q-p}$ ($D_3$ and $E$ may not be unique). Analogously, $D_2=(D_3\otimes I_{s})\circ E$ if $t>t'$.
Assume that $i+2n+1\le r$. Then $D_2=E\circ(I_{s_1}\otimes \Sigma(2n+1)\otimes I_{s_2})$ for some
$E$ in $B_k^k(-2n)$, and fixed nonnegative integers $s_1$ and $s_2$ satisfying $s_1+s_2+2n+1=k$.

Therefore, $\Ker{F}_r^r$  is generated as a two sided ideal of $B_r^r(-2n)$ by elements
of the form of Figure \ref{Dpq-1} with $2n+1+q-p\le r$.
If $p>n$, we apply the anti-involution $\ast$ of $B_k^k(-2n)$ to the element of Figure \ref{Dpq-1} to obtain
the element shown in Figure \ref{Dpq-2},
\begin{figure}[h]
\begin{center}
\begin{picture}(100, 60)(-20,0)

\put(0, 20){\line(1, 0){80}}
\put(0, 20){\line(0, 1){20}}
\put(80, 20){\line(0, 1){20}}
\put(0, 40){\line(1, 0){80}}
\put(30, 28){\tiny${2n+1}$}

\put(5, 40){\line(0, 1){20}}
\put(20, 40){\line(0, 1){20}}
\put(10, 50){...}
\put(10, 55){\tiny$q$}

\put(35, 58){\tiny${p-q}$}
\qbezier(27, 40)(41, 70)(57, 40)
\put(40, 52){.}
\put(40, 50){.}
\put(40, 48){.}
\qbezier(35, 40)(41, 54)(49, 40)

\qbezier(62, 40)(90, 80)(102, 0)
\qbezier(75, 40)(85, 60)(90, 0)
\put(90, 10){...}

\put(30, 20){\line(0, -1){20}}
\put(75, 20){\line(0, -1){20}}
\put(48, 10){...}
\put(48, 0){\tiny$p$}

\qbezier(5, 20)(-5, -5)(-10, 60)
\qbezier(20, 20)(-15, -25)(-25, 60)
\put(-22, 50){...}

\end{picture}
\end{center}
\caption{}
\label{Dpq-2}
\end{figure}
which we denote by $D$. Recall the element $X_{s, t}$ of Figure \ref{X},
which belongs to $\Sym_{s+t}$, where $\Sym_{s+t}$ is regarded as embedded in $B_{s+t}^{s+t}(-2n)$.
Then $X_{2n+1-p, q}\circ D\circ X_{2n+1-2p+q, p}$ is
of the form shown in Figure \ref{Dpq-1}, but with $p$ replaced by $2n+1-p\le n$.

Therefore, we only need to consider Figure \ref{Dpq-1} with $p\le n$ and its $\ast$ image.
Post-composing $X_{2n+1-p, q}$ to Figure \ref{Dpq-1} turns the latter into the form shown in Figure \ref{Dpq}.
Since $X_{2n+1-p, q}$ is invertible in $B_r^r(-2n)$,
$\Ker{F}_r^r$ as a two-sided ideal of $B_r^r(-2n)$ is generated by elements of
$D(p, q)$ and $\ast D(p, q)$ with $2n+1+q-p\le r$ and $p\le n$.
\end{proof}

\subsection{The element $\Phi$}

For each $k$ such that $0\le k\le \left[\frac{n+1}{2}\right]$,
define the element $E(k)= \prod_{j=1}^k e_{n+2-2j}$ of
$B_{n+1}^{n+1}(-2n)$, where $E(0)$ is the identity by convention.
Then define
\[
\Xi_k= \Sigma(n+1) E(k) \Sigma(n+1),
\]
which may be represented pictorially as

\begin{center}

\begin{picture}(100, 100)(0,-40)
\put(5, 40){\line(0, 1){20}}
\put(23, 50){...}
\put(50, 40){\line(0, 1){20}}

\put(0, 20){\line(1, 0){55}}
\put(0, 20){\line(0, 1){20}}
\put(55, 20){\line(0, 1){20}}
\put(0, 40){\line(1, 0){55}}
\put(20, 28){\tiny${n+1}$}

\put(5, 20){\line(0, -1){20}}
\put(6, 10){...}
\put(16, 20){\line(0, -1){20}}

\qbezier(19, 20)(24, 2)(29, 20)
\put(31, 16){...}
\qbezier(42, 20)(47, 2)(52, 20)
\put(33, 8){\tiny$k$}
\qbezier(19, 0)(24, 18)(29, 0)
\put(31, 4){...}
\qbezier(42, 0)(47, 18)(52, 0)

\put(0, 0){\line(1, 0){55}}
\put(0, 0){\line(0, -1){20}}
\put(55, 0){\line(0, -1){20}}
\put(0, -20){\line(1, 0){55}}
\put(20, -12){\tiny${n+1}$}
\put(5, -20){\line(0, -1){20}}
\put(23, -30){...}
\put(50, -20){\line(0, -1){20}}
\put(55, -40){.}
\end{picture}
\end{center}
Now define the following element of $B_{n+1}^{n+1}(-2n)$.
\begin{eqnarray}\label{Phi}
\begin{aligned}
\Phi=\sum_{k=0}^{\left[\frac{n+1}{2}\right]} a_k \Xi_k \quad \text{with} \quad
a_k =\frac{1}{(2^k k! )^2 (n+1-2k)!}.
\end{aligned}
\end{eqnarray}

\begin{lemma}\label{lem:Phi-Z}
The element $\Phi$ is a linear combination of Brauer diagrams with integral coefficients,
thus is defined over the ring $\Z$ of integers.
\end{lemma}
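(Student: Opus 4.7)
The plan is to prove the stronger (and strikingly clean) assertion that $\Phi$ equals the sum of \emph{all} Brauer diagrams in $B_{n+1}^{n+1}(-2n)$, each with coefficient $1$; this exhibits $\Phi$ as a $\Z$-linear combination of diagrams and settles the lemma at once.

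Since $\epsilon=-1$ in the symplectic case, $\Sigma(n+1)=\sum_{\sigma\in\Sym_{n+1}}\sigma$ is the unsigned symmetriser, and so
$$\Xi_k \;=\; \sum_{\sigma,\tau\in\Sym_{n+1}} \sigma\, E(k)\,\tau.$$
Each summand $\sigma E(k)\tau$ is a single genuine Brauer diagram: no free loops can arise in the composition because $\sigma$ and $\tau$ consist only of crossings, so no factor of $-2n$ is ever introduced. A direct reading of the concatenation shows that $\sigma E(k)\tau$ has top arcs $\{\sigma(n+2-2j),\sigma(n+3-2j)\}_{j=1}^{k}$, bottom arcs $\{\tau^{-1}(n+2-2j),\tau^{-1}(n+3-2j)\}_{j=1}^{k}$, and vertical strands joining $\sigma(i)$ on top to $\tau^{-1}(i)$ on bottom for $i=1,\dots,n+1-2k$.

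The main step is then a counting argument. Fix a Brauer diagram $D\in B_{n+1}^{n+1}(-2n)$ with exactly $k$ top arcs; a valency count forces $D$ to have exactly $k$ bottom arcs as well. I count the pairs $(\sigma,\tau)$ with $\sigma E(k)\tau = D$ by separating the choices on arc-positions from those on vertical-positions. Specifying $\sigma$ on the arc-positions $\{n+2-2k,\dots,n+1\}$ so that the top arcs of $\sigma E(k)\tau$ agree with those of $D$ gives $k!\cdot 2^k$ choices (a permutation of the $k$ arcs, then an orientation inside each), and specifying $\sigma$ on the vertical-positions $\{1,\dots,n+1-2k\}$ gives $(n+1-2k)!$ further choices. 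Once $\sigma$ is fixed, the vertical-strand bijection of $D$ determines $\tau^{-1}$ on $\{1,\dots,n+1-2k\}$ via $\tau^{-1}(i)=\varphi_D(\sigma(i))$; the bottom-arc matching then contributes another $k!\cdot 2^k$ choices for $\tau$ on the remaining indices. The grand total is
$$(k!)^2\, 4^k\, (n+1-2k)! \;=\; a_k^{-1}.$$
Hence $a_k\Xi_k$ equals the sum of all Brauer diagrams with precisely $k$ top arcs, each with coefficient $1$. Summing over $k=0,1,\dots,\lfloor(n+1)/2\rfloor$ exhausts every Brauer diagram in $B_{n+1}^{n+1}(-2n)$, yielding $\Phi=\sum_D D$.

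The only real point of care is verifying that the four types of choices above are independent and that each pair $(\sigma,\tau)$ they produce is a well-defined permutation of $\{1,\dots,n+1\}$; this reduces to the observation that, on each of the top and bottom rows, the arc-positions and vertical-positions partition $\{1,\dots,n+1\}$ into complementary subsets, so the constructions on the two halves assemble unambiguously into a permutation.
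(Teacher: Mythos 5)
Your argument is correct, and it proves something strictly stronger than the lemma: that $\Phi$ is precisely the sum of \emph{all} Brauer diagrams in $B_{n+1}^{n+1}(-2n)$, each with coefficient $1$. The paper's own proof is a divisibility-by-symmetry argument: it factors $\Xi_k=\tau\circ\beta$ with $\tau=\Sigma(n+1)\circ(I_{n+1-2k}\ot U^{\ot k})$ and $\beta=(I_{n+1-2k}\ot A^{\ot k})\circ\Sigma(n+1)$, observes that each factor is invariant under the $2^k k!$ symmetries permuting and flipping the cups (resp.\ caps), and that the symmetrising property of $\Sigma(n+1)$ on the remaining $n+1-2k$ strands contributes a further $(n+1-2k)!$, so that each coefficient of $a_k\Xi_k$ is an integer — without ever computing what that integer is. You instead run the orbit–stabiliser count to completion: since composing with permutation diagrams never closes a loop, $\Xi_k=\sum_{\sigma,\tau}\sigma E(k)\tau$ is a sum of genuine diagrams, and for each diagram $D$ with $k$ arcs the fibre of $(\sigma,\tau)\mapsto\sigma E(k)\tau$ over $D$ has exactly $(2^kk!)^2(n+1-2k)!=a_k^{-1}$ elements. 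Your identification of where the four independent factors come from (pair-matching and flips on top, the free choice of $\sigma$ on the through-strand positions, the forced $\tau^{-1}$ there, and pair-matching and flips on the bottom) is accurate, and the conclusion $\Phi=\sum_D D$ is consistent with — and substantiates — the authors' remark in the introduction that their generating elements are sums of diagrams with coefficients $\pm1$. The only cost of your route is that it requires the explicit description of the composite $\sigma E(k)\tau$, which you supply; what it buys is an exact closed form for $\Phi$ rather than mere integrality.
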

\begin{proof}
We claim that each $\frac{\Xi_k}{(2^k k! )^2 (n+1-2k)!}$
is an integral sum of Brauer diagrams despite the appearance of the denominator.

This is obvious when $k=0$ since $\frac{\Xi_0}{(n+1)!}=\Sigma(n+1)$.

For $k>0$, we let $\tau:=\Sigma(n+1)\circ(I_{n+1-2k}\otimes U^{\otimes k})$ and
$\beta:=(I_{n+1-2k}\otimes A^{\otimes k})\circ\Sigma(n+1)$.
Then $\Xi_k=\tau\circ\beta$. It is important to observe that both
$\tau$ and $\beta$ are invariant
under the interchange of the end points of each
$A$ or $U$ and under permutations of the $A$ factors or $U$ factors. Thus they are
$2^k k!$ multiples of $\Z$-linear combinations of Brauer diagrams; that is,
$\frac{\tau}{2^k k!}$ and $\frac{\beta}{2^k k!}$ are $\Z$-linear combinations of
Brauer diagrams.

It is obvious from the symmetry of $\Sigma(n+1)$ that
any permutation of the bottom $n+1-2k$ vertices in $\tau$ does not change
$\tau$; similarly permutations of the top $n+1-2k$ vertices in $\beta$
do not change $\beta$. Thus $\frac{\tau}{2^k k!}\circ\frac{\beta}{2^k k!}$ is a
$(n+1-2k)!$ multiple of a $\Z$-linear combination of Brauer diagrams.

This proves the claim, and hence the lemma.
\end{proof}

We have the following result.
\begin{lemma}\label{lem:Phi}
The element $\Phi$ has the following properties:
\begin{enumerate}
\item \label{lem:Phi-1} $e_i\Phi=\Phi e_i=0$ for all $e_i\in B_{n+1}^{n+1}(-2n)$;
\item \label{lem:Phi-2} $\Phi^2 = (n+1)!\Phi$;
\item \label{lem:Phi-3} $\ast\Phi=\Phi$;
\item \label{lem:Phi-4} $\Phi\in \Ker{F}_{n+1}^{n+1}$.
\end{enumerate}
\end{lemma}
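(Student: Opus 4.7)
The plan hinges on a key reformulation: the specific coefficient $a_k = 1/\bigl((2^k k!)^2 (n+1-2k)!\bigr)$ is precisely the reciprocal of the size of the stabilizer of $E(k)$ in $S_{n+1}\times S_{n+1}$ under the action $(\sigma,\tau)\mapsto \sigma E(k)\tau$. Consequently $a_k\Xi_k$ equals the sum over all Brauer diagrams of ``type $k$'' (meaning $k$ arcs at the top, $k$ at the bottom, and $n+1-2k$ through-strands), and summing over $k$ gives
\[
\Phi \;=\; \sum_{D} D,
\]
the sum of all Brauer diagrams in $B_{n+1}^{n+1}(-2n)$. From this description (3) is immediate, since $\ast$ is a bijection on the set of Brauer diagrams: $\ast\Phi=\sum_D \ast D=\Phi$.

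For (1), I compute $e_i\Phi=\sum_D e_i\circ D$ and classify each $D$ according to the arc structure at top positions $i,i+1$. \textbf{Case A:} these two positions are joined by a single arc of $D$; then $e_i\circ D$ contains a closed loop, contributing $\delta=-2n$ times the diagram $D'$ obtained by deleting this arc and placing the cap at $(i,i+1)$. \textbf{Case B:} positions $i,i+1$ lie in distinct arcs, say $(i,a)$ and $(i+1,b)$; then $e_i\circ D$ is a loop-free diagram $D''$ in which these two arcs have been merged into $(a,b)$ with a cap added at $(i,i+1)$. For any fixed target $D'$ with a cap at top $(i,i+1)$ there is exactly one Case A preimage (coefficient $-2n$), while the Case B preimages are obtained by choosing one of the remaining $n$ arcs of $D'$ and splitting it in one of two ways, producing $2n$ preimages of coefficient $+1$. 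The total coefficient of $D'$ in $e_i\Phi$ is $-2n+2n=0$, so $e_i\Phi=0$; the dual identity $\Phi e_i=0$ then follows by $\ast$-symmetry.

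Given (1), part (2) is short: since $\sigma\in S_{n+1}$ permutes Brauer diagrams, $\sigma\Phi=\Phi=\Phi\sigma$, and so $\Phi\,\Sigma(n+1)=(n+1)!\,\Phi$. Expanding
\[
\Phi^{2} \;=\; \sum_k a_k\, \Phi\,\Sigma(n+1)\,E(k)\,\Sigma(n+1) \;=\; (n+1)!\sum_k a_k\,\Phi E(k)\,\Sigma(n+1),
\]
every term with $k\ge 1$ vanishes because $E(k)$ has $e_n$ as its first factor and $\Phi e_n=0$ by (1); only $k=0$ survives, yielding $\Phi^{2} = (n+1)!\cdot a_0\cdot (n+1)!\,\Phi=(n+1)!\,\Phi$ since $a_0=1/(n+1)!$. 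Part (4) follows from (1) together with the representation theory of $\Sp(V)$: by $\sigma\Phi=\Phi$ and $F(\sigma)=(-1)^{|\sigma|}P_\sigma$, the image of $F(\Phi)$ is pointwise fixed by every $(-1)^{|\sigma|}P_\sigma$, hence lies in the antisymmetric tensors $\Lambda^{n+1}V$; by (1) it is also annihilated by every contraction $\hat C$ at adjacent positions, which on $\Lambda^{n+1}V$ coincides up to sign with the Lefschetz contraction $\Lambda\colon \Lambda^{n+1}V\to\Lambda^{n-1}V$ with the dual symplectic form. Thus the image lies in the primitive subspace $\Lambda^{n+1}_0 V=\ker\Lambda$, which vanishes once the degree exceeds $\dim V/2=n$; hence $F(\Phi)=0$.

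The main obstacle is the initial identification $\Phi=\sum_D D$; once this combinatorial reformulation is established, every other part reduces to a direct counting argument (for (1)) or to standard facts about $S_{n+1}$-invariants and the Lefschetz decomposition for $\Sp(V)$ (for (4)). The most delicate remaining step is the case-by-case enumeration of preimages in (1).
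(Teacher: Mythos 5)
Your proof is correct, and it takes a genuinely different route from the paper's. The pivot is your identification $\Phi=\sum_D D$, the sum of \emph{all} $(n+1,n+1)$ Brauer diagrams with coefficient $1$; this does follow from the orbit--stabilizer count you sketch, since the stabilizer of $E(k)$ under $(\sigma,\tau)\mapsto\sigma E(k)\tau$ has order exactly $(2^kk!)^2(n+1-2k)!=1/a_k$, so $a_k\Xi_k$ is the sum of the diagrams with $k$ top arcs. The paper never makes this identification explicit (Lemma~\ref{lem:Phi-Z} only extracts integrality from the same symmetry); instead it proves part (1) by the diagrammatic recursion of Lemma~\ref{lem:Sigma}, writing $(I_{n-1}\ot A_1)\circ\Xi_k$ as a combination of a type-$(k-1)$ and a type-$k$ term with coefficients $-4k^2$ and $(n+1-2k)(n-2k)$ and checking that the weighted sum telescopes, whereas your preimage count $-2n+2n=0$ (one looped preimage worth $\delta=-2n$ versus $2n$ arc-splittings worth $+1$) replaces that computation with a transparent cancellation; note that in both arguments this is exactly where the parameter $\delta=-2n$ is used. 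Parts (2) and (3) are then handled essentially as in the paper. For part (4) the paper computes the Jones trace of $F(\Phi)/(n+1)!$ and invokes the binomial identity $\sum_k(-1)^k\binom{n}{k}\binom{2n-2k}{n-1}=0$; you instead observe that $\im F(\Phi)$ lies in the antisymmetric tensors and is killed by the adjacent contractions, hence sits in the primitive part of $\Lambda^{n+1}V$, which vanishes since $n+1>\dim V/2$. Your route is shorter and more conceptual, but it imports the Lefschetz ($\fsl_2$-triple) decomposition of $\Lambda^\bullet V$ for $\Sp_{2n}$ --- a standard fact, though of comparable depth to the statement being proved --- while the paper's trace computation is self-contained within its own diagrammatic lemmas.
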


\begin{proof}
Part (\ref{lem:Phi-3}) follows from the fact that
$\ast\Xi_k=\Xi_k$ for all $k$. Part (\ref{lem:Phi-2}) immediately follows from (\ref{lem:Phi-1}).

Since $*(e_i\circ\Phi)=\Phi\circ e_{n+1-i}$, we only need to show that $e_i\circ\Phi=0$
for all $i$ in order to prove part (\ref{lem:Phi-1}).
In view of the symmetrising property of $\Sigma(n+1)$,
it suffices to show that $e_n\circ\Phi=0$. Consider
$(I_{n-1}\otimes A_1)\circ \Xi_k$, which can be shown to be equal to
\begin{eqnarray}\label{eq:ASigma}
\begin{aligned}
\begin{picture}(100, 100)(20,-40)
\put(-30, 8){$-4k^2$}
\put(5, 40){\line(0, 1){20}}
\put(23, 50){...}
\put(50, 40){\line(0, 1){20}}

\put(0, 20){\line(1, 0){55}}
\put(0, 20){\line(0, 1){20}}
\put(55, 20){\line(0, 1){20}}
\put(0, 40){\line(1, 0){55}}
\put(20, 28){\tiny${n-1}$}

\put(5, 20){\line(0, -1){20}}
\put(6, 10){...}
\put(16, 20){\line(0, -1){20}}

\qbezier(19, 20)(24, 2)(29, 20)
\put(31, 16){...}
\qbezier(42, 20)(47, 2)(52, 20)
\put(30, 8){\tiny$k$-1}
\qbezier(19, 0)(24, 18)(29, 0)
\put(31, 4){...}
\qbezier(42, 0)(47, 18)(52, 0)

\qbezier(57, 0)(62, 18)(67, 0)

\put(0, 0){\line(1, 0){70}}
\put(0, 0){\line(0, -1){20}}
\put(70, 0){\line(0, -1){20}}
\put(0, -20){\line(1, 0){70}}
\put(20, -12){\tiny${n+1}$}
\put(5, -20){\line(0, -1){20}}
\put(28, -30){$\dots$}
\put(65, -20){\line(0, -1){20}}

\put(80, 8){$+$}
\end{picture}
\begin{picture}(100, 100)(-90,-40)
\put(-110, 8){$(n+1-2k)(n-2k)$}
\put(5, 40){\line(0, 1){20}}
\put(23, 50){...}
\put(50, 40){\line(0, 1){20}}

\put(0, 20){\line(1, 0){55}}
\put(0, 20){\line(0, 1){20}}
\put(55, 20){\line(0, 1){20}}
\put(0, 40){\line(1, 0){55}}
\put(20, 28){\tiny${n-1}$}

\put(5, 20){\line(0, -1){20}}
\put(6, 10){...}
\put(16, 20){\line(0, -1){20}}

\qbezier(19, 20)(24, 2)(29, 20)
\put(31, 16){...}
\qbezier(42, 20)(47, 2)(52, 20)
\put(33, 8){\tiny$k$}
\qbezier(19, 0)(24, 18)(29, 0)
\put(31, 4){...}
\qbezier(42, 0)(47, 18)(52, 0)

\qbezier(57, 0)(62, 18)(67, 0)

\put(0, 0){\line(1, 0){70}}
\put(0, 0){\line(0, -1){20}}
\put(70, 0){\line(0, -1){20}}
\put(0, -20){\line(1, 0){70}}
\put(20, -12){\tiny${n+1}$}
\put(5, -20){\line(0, -1){20}}
\put(28, -30){$\dots$}
\put(65, -20){\line(0, -1){20}}
\end{picture}
\end{aligned}
\end{eqnarray}
by using Lemma \ref{lem:Sigma} with $\delta=-2n$. Note that each Brauer diagram summand of the
first term has $n+1-2k$ through strings, while the summands in the
second term have $n-1-2k$ through strings.
Using \eqref{eq:ASigma} one shows by simple calculation that
\[
\sum a_k (I_{n-1}\otimes A_1)\circ \Xi_k=0.
\]
Hence $(I_{n-1}\otimes A_1)\circ\Phi=0$, which implies statement (1).

To prove part (\ref{lem:Phi-4}), we note that the trace of $\frac{F(\Phi)}{(n+1)!}$ is equal to
the dimension of the subspace $F(\Phi)(V^{\otimes(n+1)})$, since
$\frac{F(\Phi)}{(n+1)!}$ is an idempotent by part (\ref{lem:Phi-2}).
In order to evaluate $tr\left(\frac{F(\Phi)}{(n+1)!}\right)$, we first consider
$tr\left(\frac{F(\Xi_k)}{(n+1)!}\right)$,
which is given by
\[
\begin{picture}(145, 60)(0, -30)
\put(0, 0) {$(-1)^{n+1}$}
\put(50, 10){\line(1, 0){70}}
\put(50, -10){\line(1, 0){70}}
\put(50, 10){\line(0, -1){20}}
\put(120, 10){\line(0, -1){20}}
\put(70, -3){$n+1$}

\qbezier(55,10)(60,40)(65, 10)
\qbezier(85,10)(90,40)(95, 10)
\qbezier(105, 10)(135, 40)(140, 0)
\qbezier(115, 10)(130, 30)(130, 0)
\put(70, 15){...}
\put(73, 20){\tiny$k$}

\qbezier(55,-10)(60,-40)(65, -10)
\qbezier(85,-10)(90,-40)(95, -10)
\qbezier(105, -10)(135, -40)(140, 0)
\qbezier(115, -10)(130, -30)(130, 0)
\put(70, -18){...}
\put(73, -25){\tiny$k$}

\put(132, 0){\tiny{...}}

\put(145, 0){$=$}
\end{picture}
\begin{picture}(150, 60)(-45, -30)
\put(-30, 0) {$(-1)^{n+1}\frac{(2n-2k)!}{(n-1)!}$}
\put(50, 10){\line(1, 0){50}}
\put(50, -10){\line(1, 0){50}}
\put(50, 10){\line(0, -1){20}}
\put(100, 10){\line(0, -1){20}}
\put(70, -3){$2k$}

\qbezier(55,10)(60,40)(65, 10)
\qbezier(85,10)(90,40)(95, 10)
\put(70, 15){...}
\put(73, 20){\tiny$k$}

\qbezier(55,-10)(60,-40)(65, -10)
\qbezier(85,-10)(90,-40)(95, -10)
\put(70, -18){...}
\put(73, -25){\tiny$k$}
\put(105, -10){,}
\end{picture}
\]
where the last step uses Lemma \ref{lem:Sigma-1}(2) with $\epsilon=-1$.
Using \eqref{eq:k-1}, one can show that
\[
\begin{picture}(150, 60)(50, -30)
\put(50, 10){\line(1, 0){50}}
\put(50, -10){\line(1, 0){50}}
\put(50, 10){\line(0, -1){20}}
\put(100, 10){\line(0, -1){20}}
\put(70, -3){$2k$}

\qbezier(55,10)(60,40)(65, 10)
\qbezier(85,10)(90,40)(95, 10)
\put(70, 15){...}
\put(73, 20){\tiny$k$}

\qbezier(55,-10)(60,-40)(65, -10)
\qbezier(85,-10)(90,-40)(95, -10)
\put(70, -18){...}
\put(73, -25){\tiny$k$}
\put(105, 0){$=$}
\put(115, 0){$(-1)^k 2^{2k} \frac{n! k!}{(n-k)!}$}
\put(190, -7){.}
\end{picture}
\]
Putting these formulae together, we arrive at
\[
\begin{aligned}
tr\left(\frac{F(\Phi)}{(n+1)!}\right) &= \frac{n!}{(n-1)!}\sum_{k=0}^{\left[\frac{n+1}{2}\right]}
a_k (-1)^k 2^{2 k} \frac{k! (2n-2k)!}{(n-k)!}\\
&=\sum_{k=0}^{\left[\frac{n+1}{2}\right]}
(-1)^k \begin{pmatrix}n \\ k\end{pmatrix}
\begin{pmatrix}2n-2k \\ n-1\end{pmatrix}.
\end{aligned}
\]
There is a binomial coefficient identity stating that the far right hand side is equal to zero.
Hence $F(\Phi)$ is the zero map on $V^{\otimes(n+1)}$.
\end{proof}

The corollary below follows from Lemma \ref{lemma:Ep-1}
and the fact that $\pi \Sigma(n+1)\pi'= \Sigma(n+1)$  for all $\pi, \pi'\in\Sym_{n+1}$.

\begin{corollary}\label{cor:uniqueness-Phi}
The element $\Phi/(n+1)!$ is the central
idempotent in $B_{n+1}^{n+1}(-2n)$ which corresponds to the trivial representation $\rho_1$
of $B_{n+1}^{n+1}(-2n)$, defined by $\rho_1(s_i)=1$ and $\rho_1(e_i)=0$ for all $i$.
It generates a $1$-dimensional two-sided ideal of $B_{n+1}^{n+1}(-2n)$.
\end{corollary}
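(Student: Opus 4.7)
The plan is to derive all four assertions — idempotence, centrality, one-dimensionality of the generated two-sided ideal, and identification with $\rho_1$ — directly from Lemma \ref{lem:Phi} together with the observation that for $\epsilon=-1$ the element $\Sigma(n+1)=\sum_{\sigma\in\Sym_{n+1}}\sigma$ is the (unnormalised) symmetriser in $K\Sym_{n+1}$.

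For centrality I would first note that $\pi\,\Sigma(n+1)=\Sigma(n+1)\,\pi=\Sigma(n+1)$ for every $\pi\in\Sym_{n+1}$, so each building block $\Xi_k=\Sigma(n+1)\,E(k)\,\Sigma(n+1)$ satisfies $s_i\Xi_k=\Xi_k s_i=\Xi_k$. Summing over $k$ gives $s_i\Phi=\Phi s_i=\Phi$, and combined with $e_i\Phi=\Phi e_i=0$ from Lemma \ref{lem:Phi}(\ref{lem:Phi-1}) this shows that $\Phi$ commutes with every standard generator, hence is central in $B_{n+1}^{n+1}(-2n)$. Lemma \ref{lem:Phi}(\ref{lem:Phi-2}) then gives $(\Phi/(n+1)!)^2=\Phi/(n+1)!$, so $\Phi/(n+1)!$ is a central idempotent.

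Next I would pin down the two-sided ideal $J$ generated by $\Phi$. By centrality, $J=B_{n+1}^{n+1}(-2n)\cdot\Phi$. For any word $b=g_1g_2\cdots g_r$ in the standard generators $\{s_i,e_i\}$, induction on $r$ — applied from the right using $s_i\Phi=\Phi$ and $e_i\Phi=0$ — gives $b\Phi\in\{0,\Phi\}$. Hence $J\subseteq K\Phi$. That $\Phi\ne 0$, so $J$ is truly one-dimensional, follows from applying the homomorphism $\rho_1$: only the $k=0$ term survives, giving $\rho_1(\Phi)=a_0\,\rho_1(\Sigma(n+1))^2=\tfrac{((n+1)!)^2}{(n+1)!}=(n+1)!\ne 0$, and simultaneously $\rho_1(\Phi/(n+1)!)=1$.

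Finally, since $\Phi/(n+1)!$ is a central idempotent generating a one-dimensional two-sided ideal on which $\rho_1$ takes value $1$, it must coincide with the primitive central idempotent associated to $\rho_1$, using the semisimplicity of $B_{n+1}^{n+1}(-2n)$ at $\delta=-2n$ mentioned in the introduction. I do not foresee any real obstacle in this plan; the only slightly delicate point is the symmetriser absorption $s_i\Xi_k=\Xi_k=\Xi_k s_i$, which is however immediate from the factorisation of $\Xi_k$ through $\Sigma(n+1)$ on both sides.
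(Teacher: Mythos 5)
Your proposal is correct and follows essentially the same route as the paper, which dispatches this corollary in one line by combining the properties of $\Phi$ from Lemma \ref{lem:Phi} (idempotence up to the factor $(n+1)!$ and annihilation by the $e_i$) with the absorption identity $\pi\,\Sigma(n+1)\,\pi'=\Sigma(n+1)$; you have simply written out the details (centrality, the $b\Phi\in\{0,\Phi\}$ induction, and the evaluation $\rho_1(\Phi)=(n+1)!$ to see $\Phi\neq 0$). No gaps.
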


\begin{remark}
Another formula for $\Phi/(n+1)!$ was given in terms of
Jucys-Murphy elements in \cite{IMO}.
\end{remark}

\subsection{The main theorem}

Recall the natural embedding of the Brauer algebra of degree $s$ in that of degree $t$ for any $t>s$.
\begin{definition}\label{def:ideal}
For each $r>n$, let $\langle \Phi\rangle_r$ be the two-sided ideal in the Brauer algebra
$B_r^r(-2n)$ generated by $\Phi$.
\end{definition}

\begin{remark}
A priori, elements such as $(I_{r-q}\ot A_q\ot I_q)(z\ot X_{q, q})(I_{r-q}\ot U_q\ot I_q)$
are not included in $\langle \Phi\rangle_r$ even if $z\in \langle \Phi\rangle_r$.
\end{remark}

We have the following result.
\begin{lemma}\label{lem:Sigma-Phi}
The element $\Sigma(2n+1)$ belongs to $\langle \Phi\rangle_{2n+1}$.
\end{lemma}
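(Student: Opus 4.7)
The plan is to start from the $\Sym_{n+1}$-invariance of $\Sigma(2n+1)$: since every $\tau$ in the subgroup $\Sym_{n+1}\subset\Sym_{2n+1}$ acting on strands $1,\ldots,n+1$ fixes $\Sigma(2n+1)$, and $\Sigma(n+1)\ot I_n=\sum_{\tau\in\Sym_{n+1}}\tau$ in the embedding, we obtain the absorption identity
\[
(\Sigma(n+1)\ot I_n)\,\Sigma(2n+1)=(n+1)!\,\Sigma(2n+1).
\]
Since $a_0=1/(n+1)!$ and $\Xi_0=(n+1)!\,\Sigma(n+1)$ give $a_0\Xi_0=\Sigma(n+1)$, the definition \eqref{Phi} rearranges to $\Sigma(n+1)=\Phi-\sum_{k\ge 1}a_k\Xi_k$. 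Substituting,
\[
(n+1)!\,\Sigma(2n+1)=(\Phi\ot I_n)\,\Sigma(2n+1)-\sum_{k\ge 1}a_k\,(\Xi_k\ot I_n)\,\Sigma(2n+1).
\]
The first term on the right is manifestly in $\langle\Phi\rangle_{2n+1}$, so the lemma reduces to showing that each $(\Xi_k\ot I_n)\Sigma(2n+1)$ with $k\ge 1$ also lies in $\langle\Phi\rangle_{2n+1}$. Writing $\Xi_k=\Sigma(n+1)E(k)\Sigma(n+1)$ and absorbing the trailing $\Sigma(n+1)\ot I_n$ into $\Sigma(2n+1)$ via the identity above, this further reduces to showing that $(\Sigma(n+1)E(k)\ot I_n)\,\Sigma(2n+1)\in\langle\Phi\rangle_{2n+1}$ for every $k\ge 1$.

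To establish the reduced claim I would exploit the ``over-hang'' generators $e_{n+1},\ldots,e_{2n}$ in $B_{2n+1}^{2n+1}(-2n)$ that are unavailable inside $B_{n+1}^{n+1}(-2n)$, together with three algebraic ingredients: (i)~Lemma \ref{lem:Phi}(1), i.e.\ $\Phi e_i=e_i\Phi=0$ for $1\le i\le n$; (ii)~the Brauer identities $s_ie_{i+1}e_i=s_{i+1}e_i$ and (via $^*$) $e_ie_{i+1}s_i=e_is_{i+1}$, which trade an $s_{i+1}$ outside $\Sym_{n+1}$ (when $i+1=n+1$) for an $s_i$ inside $\Sym_{n+1}$, at the cost of introducing $e_{i+1}$; and (iii)~the invariance $\Phi s_i=s_i\Phi=\Phi$ for $1\le i\le n$, which absorbs the traded $s_i$ into $\Phi$. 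Applying (ii) to move each $e$-factor of $E(k)$ across the $\Sym_{n+1}$-boundary, then (iii) to absorb the resulting $s$-factor, would convert $(\Sigma(n+1)E(k)\ot I_n)\Sigma(2n+1)$ into a sum of terms each explicitly containing a factor of $\Phi\ot I_n$.

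The base case $n=1,\,k=1$ illustrates the mechanism transparently. Starting from the Bruhat decomposition $\Sigma(3)=\Sigma(2)+\Sigma(2)s_2\Sigma(2)$ of $\Sym_3$ into double cosets of $\Sym_2$, substituting $\Sigma(2)=\Phi-e_1$ and using $e_1s_2e_1=e_1$ gives $\Sigma(3)=\Phi+\Phi s_2\Phi-\Phi s_2e_1-e_1s_2\Phi$; applying $s_2e_1=s_1e_2e_1$ together with $\Phi s_1=\Phi$ turns $\Phi s_2e_1$ into $\Phi e_2e_1$, and symmetrically $e_1s_2\Phi=e_1e_2\Phi$, yielding the closed form
\[
\Sigma(3)=\Phi+\Phi s_2\Phi-\Phi e_2e_1-e_1e_2\Phi\in\langle\Phi\rangle_3.
\]
The main obstacle in the general case will be purely combinatorial: the double-coset decomposition of $\Sym_{2n+1}$ modulo $\Sym_{n+1}$ produces $n+1$ orbit sums (indexed by $|\sigma(\{1,\dots,n+1\})\cap\{1,\dots,n+1\}|$), and substituting $\Sigma(n+1)=\Phi-\sum_{k\ge 1}a_k\Xi_k$ into the resulting expansion generates a large number of mixed terms. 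I expect the argument to proceed by an induction on $k$, each inductive step peeling off the outermost $e$-factor of $E(k)$ via (i)--(iii), with the coefficients $a_k$ (whose explicit form is fixed precisely by the vanishing of $(I_{n-1}\ot A_1)\Phi$ in the proof of Lemma \ref{lem:Phi}) providing the arithmetic needed to ensure that residual contributions either contain an explicit $\Phi$-factor or cancel via the kind of non-trivial identity that $e_1s_2e_1=e_1$ supplies when $n=1$.
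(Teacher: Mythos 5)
Your opening reduction is sound and matches the first move of the paper's argument: the absorption identity $(\Sigma(n+1)\ot I_n)\Sigma(2n+1)=(n+1)!\,\Sigma(2n+1)$ together with $\Sigma(n+1)=\Phi-\sum_{k\ge 1}a_k\Xi_k$ correctly reduces the lemma to showing $(\Xi_k\ot I_n)\Sigma(2n+1)\in\langle\Phi\rangle_{2n+1}$ for $k\ge 1$, and your $n=1$ computation is correct. But that reduced claim is precisely the hard part, and you have only a plan for it, not a proof: the phrases ``would convert,'' ``I expect the argument to proceed,'' and the hope that the coefficients $a_k$ ``provide the arithmetic needed'' are exactly where the content should be. The mechanism you propose --- shuttling $e$-factors across the $\Sym_{n+1}$ boundary via $s_ie_{i+1}e_i=s_{i+1}e_i$ and absorbing stray $s_i$'s into $\Phi$ --- does work when $n=1$ essentially because every diagram in $B_3(-2)$ with a cup is reachable this way, but you give no argument that the residual terms organize themselves into the ideal for general $n$ and $k$, and no induction hypothesis is actually formulated.

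The paper's proof supplies the missing ingredient from a different direction, and it is worth seeing why your purely diagram-algebraic approach is unlikely to close the gap on its own. The paper iterates the substitution $\Sigma(n+1)=\Phi-\sum a_k\Xi_k$ \emph{inside} expressions of the form $\Upsilon(r)_k=\Sigma(r)E_r^r(k)\Sigma(r)$, using $\Upsilon(r)_k=\frac{1}{(r-2k)!}\Sigma(r)(\Sigma(r-2k)\ot I_{2k})E_r^r(k)\Sigma(r)$ to drive up the number of contractions at each step; after finitely many steps every non-$\Phi$ term has the form $\Sigma(2n+1)(I\ot U_i)\Psi_{ij}(I\ot A_j)\Sigma(2n+1)$ with $i,j\ge 1+\left[\frac n2\right]$, where $\Psi_{ij}=(I_{2n+1-2i}\ot A_i)\Sigma(2n+1)(I_{2n+1-2j}\ot U_j)$. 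The decisive step is that $\Psi_{ij}=0$, which is \emph{not} a Brauer-relation computation: it is Corollary \ref{cor:small-degree}, i.e.\ the injectivity of $F_k^\ell$ for $k+\ell\le 2d$ combined with $F(\Sigma(2n+1))=0$. In other words, the proof imports an invariant-theoretic vanishing in low degree to kill the heavily contracted cores, rather than exhibiting explicit cancellations. Your proposal has no counterpart to this input, and that is the genuine gap: without it you would need to verify, by hand, identities in $B_{2n+1}(-2n)$ whose truth the paper only establishes via the functor $F$.
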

\begin{proof}
Consider $B_r^r(-2n)$ for $r>n$. Let $E_r^r(k)= \prod_{j=1}^k e_{r-2j+1}$, and define
\[
\begin{aligned}
\Upsilon(r)_k&=\Sigma(r) E_r^r(k) \Sigma(r), \quad k\ge 1,\\
\Upsilon(r)_{\ge k}&=\text{linear span of $\langle \Phi\rangle_r\cup\{\Upsilon(r)_i\mid i\ge k\}$}.
\end{aligned}
\]
We first want to show that
\begin{eqnarray}\label{eq:step}
\Sigma(r)\in \Upsilon(r)_{\ge\left[\frac{r+1-n}{2}\right]}.
\end{eqnarray}
From the formula for $\Phi$, we obtain
\[
\begin{aligned}
r!(n+1)! \Sigma(r)& =\Sigma(r)\left(\left(\Phi - \sum_{k=1}^{\left[\frac{n+1}{2}\right]}a_k
\Xi_k\right)\otimes I_{r-n-1}\right)\Sigma(r).
\end{aligned}
\]
Thus $\Sigma(r)\in \Upsilon(r)_{\ge 1}$.

Note that for any $z\in \Upsilon(r-2k)_{\ge 1}$,
$\Sigma(r)(z\otimes I_{2k}) E_r^r(k) \Sigma(r)$ belongs to $\Upsilon(r)_{\ge k+1}$.
We can always re-write $\Upsilon(r)_k$ as
\[
\Upsilon(r)_k=\frac{1}{(r-2k)!}\Sigma(r)(\Sigma(r-2k)\otimes I_{2k}) E_r^r(k) \Sigma(r).
\]
If $r-2k>n$, then $\Sigma(r-2k)\in \Upsilon(r-2k)_{\ge 1}$. This implies that
$\Upsilon(r)_k\in \Upsilon(r)_{\ge k+1}$ if $r-2k>n$. Hence
$
\Upsilon(r)_{\ge 1}=\Upsilon(r)_{\ge 2} = \dots = \Upsilon(r)_{\ge\left[\frac{r+1-n}{2}\right]},
$
 and \eqref{eq:step} is proved.

Now consider $\Sigma(2n+1)$.
It follows from \eqref{eq:step} that $\Sigma(2n+1)^2$ can be expressed as a linear combination of
elements in $\langle \Phi\rangle_{2n+1}$ and also elements of the form
\[
\Sigma(2n+1) E_{2n+1}^{2n+1}(i) \Sigma(2n+1) E_{2n+1}^{2n+1}(j) \Sigma(2n+1),
\quad i, j\ge 1+\left[\frac{n}{2}\right].
\]
Using the symmetrising property of $\Sigma(2n+1)$, we can write this element
as $\Sigma(2n+1)(I_{2n+1-2i}\otimes U_i) \Psi_{i j}(I_{2n+1-2j}\otimes A_j) \Sigma(2n+1)$ with
\[
\Psi_{i j}=
(I_{2n+1-2i}\otimes A_i)\Sigma(2n+1)(I_{2n+1-2j}\otimes U_j).
\]
By Corollary \ref{cor:small-degree}, $\Psi_{i j}=0$ for all  $i, j\ge 1+\left[\frac{n}{2}\right]$. Hence
$\Sigma(2n+1)^2$ belongs to $\langle \Phi\rangle_{2n+1}$, and so does also $\Sigma(2n+1)$.
\end{proof}

The following is one of the main results of this paper.
\begin{theorem}\label{thm:sp-main}
The algebra homomorphism  ${F}_r^r: B_r^r(-2n)\longrightarrow \Hom_{\Sp(V)}(V^{\otimes r}, V^{\otimes r})$
is injective if $r\le n$. If $r\ge n+1$,
then $\Ker{F}_r^r$ is the two-sided ideal of the Brauer algebra $B_r^r(-2n)$ which is generated by
the element $\Phi$ defined by \eqref{Phi}.
\end{theorem}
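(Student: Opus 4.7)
The injectivity statement for $r\le n$ is immediate from Theorem~\ref{thm:fft-sft}(2): in the symplectic case $d=n$, so the hypothesis $k+\ell=2r\le 2d=2n$ forces $\Ker F_r^r=0$.

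For $r\ge n+1$ the plan is to verify the two inclusions $\langle\Phi\rangle_r\subseteq\Ker F_r^r\subseteq\langle\Phi\rangle_r$ separately. The easy inclusion $\langle\Phi\rangle_r\subseteq\Ker F_r^r$ follows from Lemma~\ref{lem:Phi}(\ref{lem:Phi-4}), which gives $F_{n+1}^{n+1}(\Phi)=0$: combined with the tensor-functor property of Theorem~\ref{thm:functor} this shows $\Phi\otimes I_{r-n-1}\in\Ker F_r^r$, and since $\Ker F_r^r$ is a two-sided ideal it contains the entire ideal generated by this element.

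For the non-trivial inclusion $\Ker F_r^r\subseteq\langle\Phi\rangle_r$, I apply Proposition~\ref{lem:sp-generat}: the kernel is generated as a two-sided ideal by the elements $D(p,q)$ and $\ast D(p,q)$ with $p\le n$ and degree $k=2n+1-p+q\le r$. It therefore suffices to show that each such generator lies in $\langle\Phi\rangle_r$. By inspection of Figure~\ref{Dpq}, $D(p,q)$ is built by attaching through-strings, $p-q$ cups above the central $\Sigma(2n+1)$ box, and $p+q$ arcs joining opposite horizontal sides of the box; hence it admits a factorisation $D(p,q)=\gamma\circ\Sigma(2n+1)_{\star}\circ\delta$ in the Brauer category, where $\Sigma(2n+1)_{\star}$ is a suitable embedding of $\Sigma(2n+1)$ and $\gamma,\delta$ are Brauer morphisms encoding the surrounding cups, caps, and crossings. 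Invoking Lemma~\ref{lem:Sigma-Phi} to rewrite $\Sigma(2n+1)=\sum_i u_i(\Phi\otimes I_n)v_i$ with $u_i,v_i\in B_{2n+1}^{2n+1}(-2n)$, absorbing $u_i,v_i$ into $\gamma$ and $\delta$, tensoring with $I_{r-k}$, and rearranging using the raising/lowering isomorphisms of Corollary~\ref{isomorphism}, yields the required expression $D(p,q)\otimes I_{r-k}=\sum_i x_i(\Phi\otimes I_{r-n-1})y_i$ with $x_i,y_i\in B_r^r(-2n)$. The same argument handles $\ast D(p,q)$ via the anti-involution of Lemma~\ref{lem:brprops}(3).

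The main technical obstacle is the range $n+1\le r<2n+1$, where the embedding $B_{2n+1}^{2n+1}(-2n)\hookrightarrow B_r^r(-2n)$ by $T\mapsto T\otimes I_{r-(2n+1)}$ is unavailable. In this regime the categorical factorisation of $D(p,q)$ through $\Sigma(2n+1)$ passes through Hom-spaces of valency greater than $r$, so the expression must be reassembled into a sum of products of elements of $B_r^r(-2n)$ sandwiching a single copy of $\Phi\otimes I_{r-n-1}$; the annihilation relations $e_i\Phi=\Phi e_i=0$ of Lemma~\ref{lem:Phi}(\ref{lem:Phi-1}) play the key role here, as they collapse the intermediate cups and caps introduced by the reassembly and guarantee that the final expression lies in the algebra $B_r^r(-2n)$.
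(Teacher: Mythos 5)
Your reduction of the problem is the same as the paper's: the inclusion $\langle\Phi\rangle_r\subseteq\Ker F_r^r$ via Lemma \ref{lem:Phi}(\ref{lem:Phi-4}), and the reduction of the converse to showing $D(p,q),\ast D(p,q)\in\langle\Phi\rangle_r$ via Proposition \ref{lem:sp-generat}, are both correct and match the paper. The gap is in the step you yourself flag as the ``main technical obstacle.'' You propose to factor $D(p,q)$ through $\Sigma(2n+1)$, substitute $\Sigma(2n+1)=\sum_i u_i(\Phi\otimes I_n)v_i$ from Lemma \ref{lem:Sigma-Phi}, and then ``reassemble'' using raising/lowering operators and the relations $e_i\Phi=\Phi e_i=0$. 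But the factorisation of $D(p,q)$ through $\Sigma(2n+1)$ passes through the object $2n+1>k=2n+1-p+q$ whenever $p>q$, and after substituting for $\Sigma(2n+1)$ the flanking morphisms $\gamma\circ u_{i}$ and $v_{i}\circ\delta$ are morphisms $k\to 2n+1$ and $2n+1\to k$, not elements of $B_k^k(-2n)$. Converting such an expression into one of the form $\sum x_i(\Phi\otimes I_{r-n-1})y_i$ with $x_i,y_i\in B_r^r(-2n)$ is exactly the difficulty pointed out in the Remark following Definition \ref{def:ideal}: $\langle\Phi\rangle_r$ is an ideal of the \emph{algebra} $B_r^r(-2n)$ and is not a priori stable under the valency-changing operations $\bbA$, $\bbU$ you invoke. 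The annihilation relations $e_i\Phi=\Phi e_i=0$ do not by themselves collapse the intermediate cups and caps into the required form; asserting that they ``guarantee'' this is where the proof is missing.

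The paper resolves this by a genuinely different mechanism. For the base case $r=n+1$ (where the only generator is $D(n,0)$) it does not use $\Sigma(2n+1)$ at all: it expands $\Sigma(n+1)=\Phi-\sum_{k\ge 1}a_k\Xi_k$ inside $D(n,0)$ and shows each term $D(n,0)\Xi_k/(n+1)!$ vanishes because it factors through $\langle\Sigma(2n+1)\rangle_{n+1-2k}^1$, which is zero by Corollary \ref{cor:small-degree}. For $r>n+1$ it runs an induction on $r$: expanding $D(p,q)\circ\Sigma(2n+1-p)$ via the same identity produces $D(p,q)=\sum_k c_kD(p,q;k)+D^0$ with $D^0\in\langle\Phi\rangle_r$, and each $D(p,q;k)$ is exhibited as a composite $(D'\otimes I)\circ(\text{element of }B_r(-2n))$ where $D'\in\Ker F_{r-1}^{r-1}$, hence $D'\in\langle\Phi\rangle_{r-1}$ by the inductive hypothesis. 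Your proposal contains neither the base-case computation nor the induction, and without some substitute for them the argument does not close.
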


\begin{proof} Only the second statement requires proof. Thus we assume that $r\ge n+1$.
Consider first the case $r=n+1$. Then there is only one $D(p, q)$ with $p=n$ and $q=0$ (see Figure \ref{Dpq}).
Using $\Sigma(n+1)=\Phi - \sum_{k=1}^{\left[\frac{n+1}{2}\right]}a_k \Xi_k$, we have
\[
D(n, 0)=\frac{D(n, 0)\Phi}{(n+1)!} - \sum_{k=1}^{\left[\frac{n+1}{2}\right]} a_k\frac{D(n, 0)\Xi_k}{(n+1)!}.
\]
Note that

\begin{center}
\begin{picture}(100, 80)(0,-30)
\put(-65, 7){$\frac{D(n, 0)\Xi_k}{(n+1)!}=$}
\put(5, 40){\line(0, 1){20}}

\put(30, 58){\tiny$n$}
\qbezier(15, 40)(45, 75)(75, 40)
\qbezier(35, 40)(45, 54)(55, 40)
\put(45, 54){.}
\put(45, 52){.}
\put(45, 50){.}

\put(0, 20){\line(1, 0){80}}
\put(0, 20){\line(0, 1){20}}
\put(80, 20){\line(0, 1){20}}
\put(0, 40){\line(1, 0){80}}
\put(30, 28){\tiny${2n+1}$}

\put(5, 20){\line(0, -1){20}}
\put(6, 15){...}
\put(16, 20){\line(0, -1){20}}

{\color{red}\qbezier[60](-8, 10)(56, 10)(120, 10)}

\qbezier(19, 20)(24, 3)(29, 20)
\put(31, 16){...}
\qbezier(42, 20)(47, 3)(52, 20)
\put(33, 8){\tiny$k$}
\qbezier(19, 0)(24, 17)(29, 0)
\put(31, 4){...}
\qbezier(42, 0)(47, 17)(52, 0)

\qbezier(60, 20)(90, -100)(108, 60)
\qbezier(75, 20)(87, -75)(97, 60)
\put(97, 53){...}
\put(100, 60){\tiny$n$}

\put(0, 0){\line(1, 0){55}}
\put(0, 0){\line(0, -1){20}}
\put(55, 0){\line(0, -1){20}}
\put(0, -20){\line(1, 0){55}}
\put(20, -12){\tiny${n+1}$}
\put(5, -20){\line(0, -1){20}}
\put(20, -30){...}
\put(50, -20){\line(0, -1){20}}
\end{picture}
\end{center}
where the dotted-line indicates that the diagram is the composition of
the two diagrams above and below the line. The diagram above the dotted line is the tensor product
of an element in $\langle \Sigma(2n+1)\rangle_{n+1-2k}^1$ with $I_n$.  Since
$\langle \Sigma(2n+1)\rangle_{n+1-2k}^1=0$ for all $k\ge 1$
by Corollary \ref{cor:small-degree}, we have $\frac{D(n, 0)\Xi_k}{(n+1)!}=0$.
This proves $D(n, 0)\in \langle \Phi\rangle_{n+1}$.

Now we use induction on $r$ to show that the theorem holds for $r>n+1$.
If $p=0$, the diagram corresponds to $\Sigma(2n+1)$, which
belongs to $\langle \Phi\rangle_{2n+1}$ by Lemma \ref{lem:Sigma-Phi}.
Assume $n\ge p\ge 1$, and let $r=2n+1-p+q$. Consider $D(p, q)\circ\Sigma(2n+1-p)$
by using the the formula
\[
\Sigma(2n+1-p)=\left(\left(\Phi - \sum_{k=1}^{\left[\frac{n+1}{2}\right]}a_k \Xi_k\right)
\otimes I_{n-p}\right)\frac{\Sigma(2n+1-p)}{(n+1)!}.
\]
We obtain an expression for $D(p, q)$ of the form
\begin{eqnarray}\label{eq:D-Phi}
 D(p, q) = \sum_{k\ge 1} c_k D(p, q; k) + D^0,
\end{eqnarray}
where  $c_k$ are scalars, $D^0\in \langle \Phi\rangle_r$, and

\begin{center}
\begin{picture}(100, 80)(0,0)
\put(-65, 20){$D(p, q; k) =$}
\put(5, 40){\line(0, 1){20}}
\put(8, 50){...}
\put(20, 40){\line(0, 1){20}}

\put(30, 58){\tiny${p-q}$}
\qbezier(22, 40)(36, 70)(52, 40)
\put(35, 52){.}
\put(35, 50){.}
\put(35, 48){.}
\qbezier(30, 40)(36, 54)(44, 40)

\qbezier(55, 40)(100, 90)(108, 0)
\qbezier(70, 40)(95, 70)(97, 0)

\put(0, 20){\line(1, 0){80}}
\put(0, 20){\line(0, 1){20}}
\put(80, 20){\line(0, 1){20}}
\put(0, 40){\line(1, 0){80}}
\put(30, 28){\tiny${2n+1}$}

\put(5, 20){\line(0, -1){20}}
\put(6, 10){...}
\put(16, 20){\line(0, -1){20}}

\qbezier(19, 20)(24, 2)(29, 20)
\put(31, 16){...}
\qbezier(42, 20)(47, 2)(52, 20)
\put(33, 8){\tiny$k$}
\qbezier(19, 0)(24, 18)(29, 0)
\put(31, 4){...}
\qbezier(42, 0)(47, 18)(52, 0)

\qbezier(55, 20)(95, -30)(108, 60)
\qbezier(70, 20)(95, -10)(97, 60)
\put(97, 53){...}
\put(98, 60){\tiny$p$}

\put(0, 0){\line(1, 0){55}}
\put(0, 0){\line(0, -1){20}}
\put(55, 0){\line(0, -1){20}}
\put(0, -20){\line(1, 0){55}}
\put(15, -12){\tiny${2n+1-p}$}
\put(5, -20){\line(0, -1){20}}
\put(20, -30){...}
\put(50, -20){\line(0, -1){20}}

\put(97, 0){\line(0, -1){40}}
\put(108, 0){\line(0, -1){40}}

\put(98, -30){...}
\put(100, -36){\tiny$q$}
\end{picture}
\end{center}

\vspace{2cm}

The diagram $D(p, q; k)$ is the composition of

\begin{center}

\begin{picture}(100, 80)(0,0)
\put(-50, 30){$D'\otimes |=$}
\put(5, 40){\line(0, 1){20}}
\put(8, 50){...}
\put(20, 40){\line(0, 1){20}}

\put(30, 58){\tiny${p-q}$}
\qbezier(22, 40)(36, 70)(52, 40)
\put(35, 52){.}
\put(35, 50){.}
\put(35, 48){.}
\qbezier(30, 40)(36, 54)(44, 40)

\qbezier(55, 40)(100, 90)(108, 0)
\qbezier(70, 40)(95, 70)(97, 0)
\put(97, 7){...}
\put(98, -5){\tiny$q$}

\put(0, 20){\line(1, 0){80}}
\put(0, 20){\line(0, 1){20}}
\put(80, 20){\line(0, 1){20}}
\put(0, 40){\line(1, 0){80}}
\put(30, 28){\tiny${2n+1}$}

\put(5, 20){\line(0, -1){20}}
\put(14, 10){...}
\put(33, 20){\line(0, -1){20}}

\qbezier(36, 20)(43, -3)(52, 20)

\qbezier(55, 20)(95, -30)(108, 60)
\qbezier(70, 20)(95, -10)(97, 60)
\put(97, 53){...}
\put(97, 62){\tiny${p-1}$}
\put(115, 60){\line(0, -1){60}}
\end{picture}
\end{center}

\bigskip

\noindent
with the following element of $B_r(-2n)$

\begin{center}
\begin{picture}(100, 80)(-5, -35)
\qbezier(19, 20)(24, 2)(29, 20)
\put(31, 16){...}
%\qbezier(42, 20)(47, 2)(52, 20)
\put(31, 8){\tiny$k$}
\qbezier(19, 0)(24, 18)(29, 0)
\put(31, 4){...}
\qbezier(42, 0)(47, 18)(52, 0)

\put(5, 20){\line(0, -1){20}}
\put(6, 10){...}
\put(16, 20){\line(0, -1){20}}

\put(5, 20){\line(0, 1){20}}
\put(16, 20){\line(0, 1){20}}

\put(19, 20){\line(0, 1){20}}
\put(29, 20){\line(0, 1){20}}
\put(42, 20){\line(0, 1){20}}

\qbezier(42, 20)(47, -5)(80, 40)

\put(0, 0){\line(1, 0){55}}
\put(0, 0){\line(0, -1){20}}
\put(55, 0){\line(0, -1){20}}
\put(0, -20){\line(1, 0){55}}
\put(15, -12){\tiny${2n+1-p}$}
\put(5, -20){\line(0, -1){20}}
\put(20, -30){...}
\put(50, -20){\line(0, -1){20}}

\put(65, -15){...}
\put(70, -25){\tiny$q$}

\put(52, 40){\line(1, -5){16}}
\put(65, 40){\line(1, -5){16}}
\end{picture}
\end{center}

\noindent
Note that $D'$ belongs to $\ker{{F}_{r-1}}$.
Thus $D'\in \langle \Phi\rangle_{r-1}$ by the
induction hypothesis and it follows that
$D(p, q; k)\in \langle \Phi\rangle_r$.
This completes the proof.
\end{proof}

\begin{remark}\label{rem:uniqueness-Phi}
Any element which generates the kernel $\Ker{F}_{n+1}^{n+1}=\langle \Phi\rangle_{n+1}$
must be a nontrivial scalar multiple of $\Phi$. It was proved in \cite{HX} that for all $r\ge n+1$,
$\Ker{F}_r^r$ is generated by a single generator belonging to $\Ker{F}_{n+1}^{n+1}$.
Therefore, our $\Phi$ provides an explicit formula for this generator (up to a scalar multiple).
\end{remark}

\section{Structure of the endomorphism algebra: the orthogonal case}
We now study the algebraic structure of $\Ker{F_r^r}$ in the case of the orthogonal group.
Throughout this section, we take $G=\Or(V)$ with $\dim V = m$ and $\epsilon=1$.

\subsection{Generators of the kernel}

For $p=0, 1, \dots, m+1$, let $E_{m+1-p}$ denote the element of the
Brauer algebra $B_{m+1}^{m+1}(m)$ of degree $m+1$ shown in Figure \ref{Ep}.
\begin{figure}[h]
\begin{center}
\begin{picture}(100, 60)(0,0)
\put(-35, 27){$E_{m+1-p}=$}

\put(25, 40){\line(0, 1){20}}
\put(33, 50){...}
\put(50, 40){\line(0, 1){20}}

\qbezier(58, 40)(100, 90)(108, 0)
\qbezier(72, 40)(95, 70)(97, 0)
\put(97, 7){...}
\put(98, -5){\tiny$p$}

\put(20, 20){\line(1, 0){60}}
\put(20, 20){\line(0, 1){20}}
\put(80, 20){\line(0, 1){20}}
\put(20, 40){\line(1, 0){60}}
\put(40, 28){\tiny${m+1}$}

\put(25, 20){\line(0, -1){20}}
\put(33, 10){...}
\put(50, 20){\line(0, -1){20}}
%\put(15, -5){\tiny i}

\qbezier(58, 20)(95, -30)(108, 60)
\qbezier(72, 20)(95, -10)(97, 60)
\put(97, 53){...}
\put(98, 60){\tiny$p$}
\end{picture}
\end{center}
\caption{}
\label{Ep}
\end{figure}

\begin{lemma}\label{lem:E-Z}
For all $0\le k\le m+1$, the elements $E_k$ are linear combinations of
Brauer diagrams over $\Z$.
\end{lemma}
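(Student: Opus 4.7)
The plan is to expand $\Sigma(m+1)$ as a signed sum over $\Sym_{m+1}$ and show that each summand, when placed inside the $\Sigma$-box of Figure~\ref{Ep}, contributes a single (unscaled) Brauer diagram. Since $\Sigma(m+1)=\sum_{\sigma\in\Sym_{m+1}}(-1)^{|\sigma|}\sigma$ already has $\pm 1$ coefficients, the lemma will follow immediately.

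First I would read off from Figure~\ref{Ep} the fixed bijection between the $2(m+1)$ nodes of the box and the $2(m+1)$ boundary nodes of the ambient diagram: for $i\le m+1-p$ the top-box and bottom-box node $i$ are matched with top-figure and bottom-figure node $i$ respectively, whereas for $i>m+1-p$ the right-hand curves send top-box node $i$ to a bottom-figure node and bottom-box node $i$ to a top-figure node. Equivalently, one can write $E_{m+1-p}=\alpha\circ(\Sigma(m+1)\otimes I_{2p})\circ\beta$ for fixed Brauer diagrams $\alpha,\beta$ built from $A_p$, $U_p$ and $X_{p,p}$ on the right-hand strands; in particular neither $\alpha$ nor $\beta$ carries any power of $\delta$.

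For each $\sigma\in\Sym_{m+1}$, the arc $\{\text{top-box-}i,\,\text{bottom-box-}\sigma(i)\}$ of $\sigma$ is transported by this fixed wiring to a single arc among boundary nodes of the figure, yielding an $(m+1,m+1)$ Brauer diagram $\sigma_E$. The essential observation is that no closed loop is ever produced in this transportation: the external wiring contains no cup on top of the box that meets a cap on the bottom of the box---the right-hand curves are simply through-routes between the box and the ambient boundary---so concatenating any arc of $\sigma$ with the external strands always yields an open path between two distinct boundary nodes. Consequently no factor of $\delta$ appears, and we conclude $E_{m+1-p}=\sum_{\sigma\in\Sym_{m+1}}(-1)^{|\sigma|}\sigma_E$ is a $\Z$-linear combination of Brauer diagrams, as required. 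I do not anticipate a real obstacle: the lemma is transparent from the picture once the wiring is correctly set up, in contrast with the symplectic analogue (Lemma~\ref{lem:Phi-Z}), where the rational coefficients $a_k$ with large denominators necessitate a genuine divisibility argument.
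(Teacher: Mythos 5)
Your argument is correct and is exactly the reasoning the paper leaves implicit when it declares the lemma "evident from the definition": each permutation $\sigma$ in the unnormalised antisymmetriser $\Sigma_{+1}(m+1)$, composed with the fixed external wiring of Figure~\ref{Ep}, yields a single Brauer diagram with coefficient $(-1)^{|\sigma|}=\pm 1$, and no closed loop (hence no power of $\delta$) can arise since every box node is wired to a boundary node. Your closing contrast with Lemma~\ref{lem:Phi-Z}, where a genuine divisibility argument is needed, is also apt.
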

This is evident from the definition of these elements.
They also have the following properties.
\begin{lemma}\label{lemma:Ep-1}
\begin{enumerate}
\item $*E_p = E_{m+1-p}$ for all $p$.
\item $F_p E_p = E_p F_p=p!(m+1-p)! E_p$.
\item $e_i E_p = E_p e_i=0$ for all $i\le m$.
\end{enumerate}
\end{lemma}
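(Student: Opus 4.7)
The plan is to handle the three parts in sequence, relying throughout on two structural properties of the antisymmetriser $\Sigma_1(m+1)=\sum_\sigma(-1)^{|\sigma|}\sigma$: first, $*\Sigma_1(m+1)=\Sigma_1(m+1)$ since $*\sigma=\sigma^{-1}$ and $|\sigma^{-1}|=|\sigma|$; and second, $\Sigma_1(m+1)$ is central in $KS_{m+1}$ (being a nonzero scalar multiple of the sign idempotent), so $\tau\Sigma_1(m+1)\tau^{-1}=\Sigma_1(m+1)$ for every $\tau\in S_{m+1}$. A third ingredient, used repeatedly, is the annihilation $e_i\Sigma_1(m+1)=\Sigma_1(m+1)e_i=0$, which follows by pairing $\sigma$ with $s_i\sigma$ in the defining sum and using the Brauer relations $e_is_i=s_ie_i=e_i$.

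For Part (i), I would apply the anti-involution $*$ directly to the diagram of $E_p$ (which acts as a $180^\circ$ rotation). Expanding $E_p=\sum_\sigma(-1)^{|\sigma|}D_\sigma$ over $\sigma\in S_{m+1}$ and tracking how each Brauer diagram $D_\sigma$ transforms, the rotated external structure matches the external structure of $E_{m+1-p}$ once the indexing permutations are relabelled by conjugation by a fixed element $\tau\in S_{m+1}$ (a conjugate of the longest element $w_0$, arising from the reversal implicit in rotating the cross-arcs). Centrality of $\Sigma_1(m+1)$ ensures that the reparametrisation $\sigma\mapsto\tau\sigma\tau^{-1}$ preserves both the sign $(-1)^{|\sigma|}$ and the sum, yielding $*E_p=E_{m+1-p}$.

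For Part (ii), I interpret $F_p$ as the parabolic antisymmetriser $F_p=\sum_{\sigma\in S_p\times S_{m+1-p}}(-1)^{|\sigma|}\sigma$ of the Young subgroup, with $S_p$ acting on positions $1,\dots,p$ and $S_{m+1-p}$ on $p+1,\dots,m+1$. For each $\sigma=\sigma_1\sigma_2\in S_p\times S_{m+1-p}$, left multiplication by $\sigma$ on $E_p$ permutes the $p$ vertical lines via $\sigma_1$ (which then multiplies the top of the box $\Sigma_1(m+1)$) and the $m+1-p$ cross-arcs via $\sigma_2$ (which, via the cross-arc pairing, multiplies the bottom of the box). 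Using $\sigma_1\Sigma_1(m+1)=(-1)^{|\sigma_1|}\Sigma_1(m+1)$ and $\Sigma_1(m+1)\sigma_2=(-1)^{|\sigma_2|}\Sigma_1(m+1)$, we get $\sigma\cdot E_p=(-1)^{|\sigma|}E_p$, and hence $F_pE_p=\sum_\sigma(-1)^{|\sigma|}\cdot(-1)^{|\sigma|}E_p=p!(m+1-p)!\,E_p$. The identity $E_pF_p=p!(m+1-p)!\,E_p$ follows by applying $*$ and invoking Part (i).

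For Part (iii), Part (i) combined with $*e_i=e_{m+1-i}$ reduces the claim to $e_iE_p=0$ for all $i\leq m$. The argument splits into three cases according to the position of $i$ relative to the shape-boundary at $p$. If $i+1\leq p$ the cap of $e_i$ lies wholly in the vertical-line region and, through the vertical lines, becomes a cap on box-top positions $i,i+1$; the resulting composition contains $e_i\Sigma_1(m+1)$ as a factor and vanishes. If $i\geq p+1$ the cap lies wholly in the cross-arc region and, via the cross-arc pairing, translates to a cap on two adjacent box-bottom positions, giving a $\Sigma_1(m+1)e_j$ factor which also vanishes. The main obstacle is the boundary case $i=p$, where $e_p$ straddles a vertical line (at $p$) and a cross-arc (at $p+1$) and creates a single external arc from box-top position $p$ to box-bottom position $m+1$ that does not directly reduce to an $e_j$ sitting next to $\Sigma_1(m+1)$. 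Here I would expand $E_p=\sum_\sigma(-1)^{|\sigma|}D_\sigma$ and compute $e_pD_\sigma$ term-by-term, pairing each $\sigma$ with a suitable partner (an appropriate transposition of $\sigma$) so that the contributions to $e_pE_p$ cancel; the cancellation uses the Brauer parameter $\delta=m$ (the loop factor $e_p^2=m\,e_p$) to absorb the closed loops produced when $\sigma(p)=m+1$.
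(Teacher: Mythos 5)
Your proof is correct, and for parts (1) and (2) it supplies exactly the details the paper suppresses (``both follow easily from the pictorial representation''). In part (1) your key point — that the $180^\circ$ rotation amounts to conjugating the summation index of $\Sigma_{+1}(m+1)$ by a fixed permutation, and conjugation preserves parity — is the right one; only the parenthetical identification of that permutation with a conjugate of $w_0$ should be dropped (it need not be an involution, and nothing in the argument requires it). The genuine divergence is in part (3). For $i\neq p$ the paper routes the argument through $F_p$: since $i$ and $i+1$ then lie in a common block of the Young subgroup, $e_iF_p=F_pe_i=0$, and part (2) yields $e_iE_p=(p!(m+1-p)!)^{-1}e_iF_pE_p=0$; your direct sliding of the cap onto two adjacent top (resp.\ bottom) positions of the antisymmetriser box is an equivalent and equally valid mechanism. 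For $i=p$ the paper simply invokes Lemma \ref{lem:Sigma-1}(2) with $r=m+1$, $\epsilon=1$, $\delta=m$, where the coefficient $r-1-\epsilon\delta=m-m$ vanishes; your term-by-term computation reproves precisely this special case, so it works but is redundant given that lemma. One correction there: the cancellation is not a pairing but an $(m{+}1)$-fold grouping — for each $\tau\in\Sym_m$, the unique $\sigma$ whose closure produces a free loop contributes $(-1)^{|\tau|}\delta=(-1)^{|\tau|}m$, and this cancels against the $m$ loop-free $\sigma$'s closing to $\tau$, each contributing $-(-1)^{|\tau|}$. Finally, your index $*e_i=e_{m+1-i}$ in $B_{m+1}^{m+1}(m)$ is the correct one (the formula $*e_i=e_{r+1-i}$ in the remarks following Lemma \ref{lem:brprops} is off by one), and the reduction of $E_pe_i=0$ to the left-hand statements via $*$ and part (1) is sound.
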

\begin{proof} Both (1) and (2) follow easily from the pictorial representation of
$E_p$ given in Figure \ref{Ep}. If $i\ne p$, then $e_i F_p=F_p e_i =0$. Thus (3)
holds for all $i\ne p$. The $i=p$ case of (3) follows from the fact that
\[
\begin{picture}(80, 60)(0, -30)
\put(0, 10){\line(1, 0){60}}
\put(0, -10){\line(1, 0){60}}
\put(0, 10){\line(0, -1){20}}
\put(60, 10){\line(0, -1){20}}
\put(18, -3){$m+1$}

\put(10, 10){\line(0, 1){15}}
\put(20, 15){$\cdots$}
\put(40, 10){\line(0, 1){15}}

\qbezier(50, -10)(60, -40)(65, 0)
\qbezier(50, 10)(60, 40)(65, 0)

\put(10, -10){\line(0, -1){15}}
\put(20, -20){$\cdots$}
\put(40, -10){\line(0, -1){15}}

\put(75, -3){$=0$,}
\end{picture}
\]
which is implied by Lemma \ref{lem:Sigma-1}(2) when $r=m+1$ and $\epsilon=1$.
\end{proof}

The arguments used in the proof of \cite[Corollary 5.13]{LZ4} lead to
\begin{corollary}[\cite{LZ4}]\label{cor:ann}
Let $D$ be any diagram in $B_{m+1}^{m+1}(n)$ which has fewer
than $m+1$ through strings. Then $D E_i = E_i D = 0$ for all $i$.
\end{corollary}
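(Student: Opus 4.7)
The approach is to reduce the claim to an analogous annihilation identity for the antisymmetrizer $\Sigma(m+1)$, exploiting the fact that $E_i$ is built around $\Sigma(m+1)$ (Figure~\ref{Ep}).

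First I will establish the key lemma: for any $y \in B_{m+1}^{m+1}$ with fewer than $m+1$ through strings, both $\Sigma(m+1)\cdot y = 0$ and $y\cdot\Sigma(m+1) = 0$ hold. Such a $y$ necessarily has a top arc at some pair $\{a,b\}$ of its top vertices, and the transposition $s=(a,b)\in\Sym_{m+1}$ then satisfies $s\cdot y=y$, since swapping the two top endpoints of an arc produces the same Brauer diagram. Combined with the antisymmetry relation $\Sigma(m+1)\cdot s = -\Sigma(m+1)$, this yields
\[
\Sigma(m+1)\cdot y \;=\; \Sigma(m+1)\cdot s\cdot y \;=\; -\Sigma(m+1)\cdot y,
\]
forcing $\Sigma(m+1)\cdot y=0$ in characteristic zero. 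The identity $y\cdot\Sigma(m+1)=0$ follows from the existence of a bottom arc of $y$ by the dual argument, using right multiplication.

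Next I will argue that $E_i$ lies in the two-sided ideal of $B_{m+1}^{m+1}$ generated by $\Sigma(m+1)$, i.e.\ $E_i=\sum_k\alpha_k\,\Sigma(m+1)\,\beta_k$ for some $\alpha_k,\beta_k\in B_{m+1}^{m+1}$. This is suggested directly by Figure~\ref{Ep}, where $E_i$ is constructed by placing a $\Sigma(m+1)$ box in the middle and connecting it to the outer vertices by the straight strands and the right-side loops; the external structure is intended to provide the sandwich factors $\alpha_k,\beta_k$. Granted this, the conclusion is immediate: for $D$ with fewer than $m+1$ through strings we compute
\[
D\cdot E_i \;=\; \sum_k (D\alpha_k)\cdot\Sigma(m+1)\cdot\beta_k,
\]
and since composition of Brauer diagrams cannot increase the through-string count, each $D\alpha_k$ still has fewer than $m+1$ through strings; so each term vanishes by the first step, giving $DE_i=0$. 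A symmetric computation gives $E_iD=0$.

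The main obstacle is making the sandwich decomposition $E_i=\sum_k\alpha_k\Sigma(m+1)\beta_k$ precise inside $B_{m+1}^{m+1}$. The difficulty is that the right-side loops in $E_i$ connect outer-top to outer-bottom around rather than through the $\Sigma$ box, so they are not realizable as compositions sitting literally above and below $\Sigma(m+1)$ in $B_{m+1}^{m+1}$. The cleanest resolution uses the categorical machinery of Section~\ref{subsect:tangles}: one passes to the ambient category $\cB(m)$, uses the raising and lowering isomorphisms of Corollary~\ref{isomorphism} to express each loop as a pair of cups and caps attached to an extended form of $\Sigma(m+1)$, and then collapses back to obtain an $E_i$-expansion of the desired sandwich form; an alternative route would work instead with the expansion $E_i=\sum_\sigma(-1)^{|\sigma|}E_{i,\sigma}$ and exhibit, for each arc of $D$, a sign-reversing involution on $\Sym_{m+1}$ that pairs up the summands of $DE_i$, with the case of arcs straddling the straight and loop regions requiring the most care.
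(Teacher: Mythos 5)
Your central step fails: the decomposition $E_i=\sum_k\alpha_k\,\Sigma(m+1)\,\beta_k$ with $\alpha_k,\beta_k\in B_{m+1}^{m+1}(m)$ does not exist for $0<i<m+1$. To see this, pass to the quotient $B_{m+1}^{m+1}(m)\to K\Sym_{m+1}$ killing the ideal generated by the $e_j$. By Lemma \ref{lem:Ep} only the $j=0$ term of $E_i$ survives, so $E_i$ maps to $F_i=A(1,i)A(i+1,m+1)$, whose two-sided ideal in $K\Sym_{m+1}$ contains every block $S^{(2^k,1^{m+1-2k})}$ with $0\le k\le\min(i,m+1-i)$ (it has dimension $\ge\binom{m+1}{i}>1$ as a left ideal); whereas the image of $\langle\Sigma(m+1)\rangle$ is exactly the one-dimensional sign block. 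Hence $E_i\notin\langle\Sigma(m+1)\rangle$. Your proposed repair via raising and lowering only shows that $E_i$ lies in the \emph{categorical} ideal $\langle\Sigma(m+1)\rangle_{m+1}^{m+1}$ (cups and caps applied to $\Sigma(m+1)$), which is strictly larger than the algebra ideal --- precisely the distinction the paper flags in the Remark after Definition \ref{def:ideal} --- and your through-string argument does not transfer to that setting: an arc of $D$ joining a ``straight'' vertex of $E_i$ to a ``loop'' vertex does not multiply the antisymmetriser on one side, it effects a \emph{partial trace} of it, to which your key lemma says nothing.

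That straddling case, which you explicitly defer, is the entire content of the statement, and it cannot be settled by antisymmetry or a sign-reversing involution alone because it depends on the value $\delta=m$: already $D=e_j$ is an instance of the corollary, and Lemma \ref{lemma:Ep-1}(3) is proved via Lemma \ref{lem:Sigma-1}(2), where the partial trace of $\Sigma_{+1}(m+1)$ carries the coefficient $-(r-1-\delta)=-(m-\delta)$ and vanishes exactly because $\delta=m$. Your argument nowhere uses $\delta=m$, so it cannot be complete. The argument that does work (following \cite{LZ4}) runs as follows: $D$ has a bottom arc $\{a,b\}$. If $a,b$ lie in the same part of $\{1,\dots,i\}\sqcup\{i+1,\dots,m+1\}$, then $D\circ(a,b)=D$ while $(a,b)\circ E_i=-E_i$ (from $F_iE_i=i!(m+1-i)!\,E_i$), so $DE_i=-DE_i=0$ --- this is the part of your step 1 that survives. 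If the arc straddles, choose $\tau\in\Sym_i\times\Sym_{m+1-i}$ so that $D\tau$ has its arc at $\{i,i+1\}$; then $DE_i=\pm(D\tau)E_i=\pm\tfrac{1}{m}(D\tau)e_iE_i=0$ by Lemma \ref{lemma:Ep-1}(3), using $\delta=m\ne0$. The identity $E_iD=0$ follows from the mirror argument applied to a top arc of $D$.
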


Note that $E_0=E_{m+1}=\Sigma_{+1}(m+1)$.
\begin{proposition}\label{lem:o-generat}
Assume $r>m$. As a two-sided ideal of the Brauer algebra $B_r^r(m)$, $\Ker{F}_r^r$
is generated by $E_p$ for all $0\le p\le m+1$.
\end{proposition}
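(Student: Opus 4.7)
The plan is to mirror the proof of Proposition \ref{lem:sp-generat} in the symplectic case, adapted to the orthogonal setting where the antisymmetrizer $\Sigma_{+1}(m+1)$ takes the place of the symmetrizer. By Theorem \ref{thm:fft-sft}(2), $\Ker{F}_r^r = \langle \Sigma_{+1}(m+1)\rangle_r^r$, and Corollary \ref{isomorphism} together with Lemma \ref{lem:transfrom}(2) allow a passage to the equivalent study of $\Ker{F}_{2r}^0$. The latter is spanned, by the orthogonal version of Lemma \ref{lem:gammainker}, by elements
\[
A_S := \sum_{\pi \in \Sym(S)}(-1)^{|\pi|} A \circ \pi = A\circ\Sigma_{+1}(S),
\]
with $A$ a $(2r, 0)$-Brauer diagram and $S \subset [1, 2r]$ of cardinality $m+1$. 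Following the symplectic argument, the next step is to conjugate by elements of the parabolic subgroup $\Sym_r \times \Sym_r \subset \Sym_{2r} \subset B_{2r}^{2r}(m)$ (which are invertible in the Brauer algebra) to move $S$ to a consecutive block $S' = \{i+1,\ldots,i+m+1\}$; this is a reduction up to the two-sided ideal structure.

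The crucial new ingredient in the orthogonal case is the vanishing $A_S = 0$ whenever $A$ has an edge with both endpoints in $S$. Indeed, if $\{v_1, v_2\}$ is such an edge and $\tau = (v_1\, v_2) \in \Sym(S)$, then $A \circ \tau = A$ (the transposition merely swaps the two endpoints of an existing pair), while $\Sigma_{+1}(S) \circ \tau = -\Sigma_{+1}(S)$ by the defining antisymmetric property of the antisymmetrizer; hence
\[
A \circ \Sigma_{+1}(S) = A \circ \tau \circ \Sigma_{+1}(S) = -A \circ \Sigma_{+1}(S),
\]
forcing $A_S = 0$. Thus in the surviving configurations, each of the $m+1$ vertices of $S'$ is paired in $A \circ \sigma^{-1}$ with a vertex outside $S'$, and no edge of the diagram is internal to the block $S'$. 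This is a substantial simplification absent in the symplectic case.

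Converting back via $A_S^\natural := A_S \circ (I_r \ot U_r) \in B_r^r(m)$ and further normalising by invertible permutations, the element will decompose as $D_1 \ot D_2$ where $D_1$ absorbs the edges not involving $S'$ and $D_2 \in B_k^k(m)$ contains the antisymmetrizer block $\Sigma_{+1}(m+1)$ together with the $m+1$ exit edges of $S'$. A case analysis, depending on the position of $S'$ relative to the fold at $r$ (namely whether $i+m+1 \le r$, $i \ge r$, or $i < r < i+m+1$) and on how the $m+1$ exit edges distribute between the bottom and top of the folded diagram, will show that $D_2$, up to invertible permutation conjugation, is precisely $E_{m+1-p}$ for some $p \in \{0,1,\ldots,m+1\}$ as depicted in Figure \ref{Ep}, where $p$ records the number of exit edges ending up on the opposite side of the diagram after folding. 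The main obstacle is this final combinatorial bookkeeping, which parallels the symplectic treatment but is substantially shortened by the vanishing step above; conversely, that each $E_p$ lies in $\Ker F_r^r$ is immediate from $F(\Sigma_{+1}(m+1)) = 0$ combined with Lemma \ref{lemma:Ep-1}.
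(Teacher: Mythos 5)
Your proposal takes essentially the same route as the paper: the published proof of Proposition \ref{lem:o-generat} also modifies the symplectic argument of Proposition \ref{lem:sp-generat} by inserting the signs $(-1)^{|\pi|}$ into $A_S$ and discarding the subsets $S$ for which $A_S$ vanishes trivially (Remark \ref{rem:pertinent}(i)), which is exactly your observation that an edge of $A$ internal to $S$ forces $A_S=0$. The surviving configurations are then the $p=q$ analogues of Figure \ref{Dpq-1}, i.e.\ Figure \ref{Ep-1}, and post-composition with the invertible $X_{m+1-p,p}$ yields $E_p$ up to sign, matching your final bookkeeping step.
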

\begin{proof}
The proof of Proposition \ref{lem:sp-generat} can easily be modified
to prove the assertion above.  The two required modifications are that for
any $(2r, 0)$ tangle diagram $A$ with
associated invariant functional $\gamma=F(A)$,
(i) the definition \eqref{eq:AS} of $A_S$ needs to be changed to
\[
A_S= \sum_{\pi\in\Sym_S} (-1)^{|\pi|}A\circ\pi;
\]
(ii) we only need to consider subsets $S$ of $[1, 2r]$ which will not lead to
the trivial vanishing of $A_S$ discussed in Remark \ref{rem:pertinent} (i).
With these modifications, the arguments following \eqref{eq:AS}
may be repeated verbatim, leading to the conclusion that
$\Ker{F}_r^r$ is generated as a two-sided ideal of
$B_r^r(-2n)$  by elements of the form Figure \ref{Ep-1}.
\begin{figure}[h]
\begin{center}
\begin{picture}(100, 60)(-20,0)

\put(20, 20){\line(1, 0){60}}
\put(20, 20){\line(0, 1){20}}
\put(80, 20){\line(0, 1){20}}
\put(20, 40){\line(1, 0){60}}
\put(42, 28){\tiny${m+1}$}

\qbezier(30, 40)(10, 70)(10, 0)
\qbezier(45, 40)(5, 90)(-5,0)
\put(0, 10){...}
\put(0, 0){\tiny$p$}

\put(50, 40){\line(0, 1){20}}
\put(70, 40){\line(0, 1){20}}
\put(55, 50){...}

\put(30, 20){\line(0, -1){20}}
\put(35, 10){...}
\put(50, 20){\line(0, -1){20}}
%\put(15, -5){\tiny i}

\qbezier(55, 20)(95, -30)(105, 60)
\qbezier(70, 20)(90, -10)(90, 60)
\put(92, 53){...}
\put(95, 60){\tiny$p$}
\end{picture}
\end{center}
\caption{}
\label{Ep-1}
\end{figure}

Post-multiplying the diagram in Figure \ref{Ep-1} by the invertible element $X_{m+1-p, p}$ , we obtain
Figure \ref{Ep} up to a sign. This completes the proof.
\end{proof}

\begin{remark}
Figure \ref{Ep-1} is the $p=q$ analogue of Figure \ref{Dpq-1}.
In the present case,  diagrams of the form Figure \ref{Dpq-1} with $p>q$ vanish identically,
since $\Sigma_{+1}(m+1)$ is the total antisymmetriser in $\Sym_{m+1}$.
\end{remark}

\subsection{Formulae for the $E_i$}

If $k,l$ are integers such that $1\leq k<l$, write
$A(k,l):=\Sigma_{+1}(\Sym_{\{k,k+1,\dots,l\}})$ for the total antisymmetriser
in $\Sym_{\{k,k+1,\dots,l\}}$.
By convention,  $A(k,l)=1$ if $k\geq l$.
Represent $A(1, t)A(t+1, t+s)$ in $B_{t+s}^{t+s}(m)$ pictorially by
\[
\begin{picture}(120, 80)(0, -40)
\put(5, 10){\line(1, 0){45}}
\put(5, -10){\line(1, 0){45}}
\put(5, 10){\line(0, -1){20}}
\put(50, 10){\line(0, -1){20}}
\put(25, -3){$t$}

\put(10, 10){\line(0, 1){25}}
\put(45, 10){\line(0, 1){25}}
\put(23, 25){...}

\put(10, -10){\line(0, -1){25}}
\put(45, -10){\line(0, -1){25}}
\put(23, -30){...}

\put(60, 10){\line(1, 0){45}}
\put(60, -10){\line(1, 0){45}}
\put(60, 10){\line(0, -1){20}}
\put(105, 10){\line(0, -1){20}}
\put(80, -3){$s$}

\put(100, -10){\line(0, -1){25}}
\put(80, -30){...}
\put(65, -10){\line(0, -1){25}}
\put(100, 10){\line(0, 1){25}}
\put(65, 10){\line(0, 1){25}}
\put(80, 25){...}
\put(110, -5){.}
\end{picture}
\]
The lemma below is the graphical reformulation of
some of the computations in the proofs of \cite[Corollary 5.2]{LZ4} and
\cite[Theorem 5.10]{LZ4}.

\begin{lemma}\label{lem:AfU}
For all $k=0, 1, \dots, i$
\begin{equation}\label{eq:AfU}
\begin{aligned}
\begin{picture}(120, 80)(5, -40)
\put(5, 10){\line(1, 0){45}}
\put(5, -10){\line(1, 0){45}}
\put(5, 10){\line(0, -1){20}}
\put(50, 10){\line(0, -1){20}}
\put(23, -3){\tiny$i$}

\qbezier(45, 10)(55, 30) (65, 10)
\put(10, 10){\line(0, 1){25}}
\put(35, 10){\line(0, 1){25}}
\put(18, 25){...}

\put(10, -10){\line(0, -1){25}}
\put(25, -10){\line(0, -1){25}}
\put(13, -30){...}

\qbezier(30, -10)(55, -50) (80, -10)
\qbezier(45, -10)(55, -30) (65, -10)
\put(55, -29){.}
\put(55, -23){.}
\put(55, -26){.}
\put(55, -40){\tiny$k$}

\put(60, 10){\line(1, 0){45}}
\put(60, -10){\line(1, 0){45}}
\put(60, 10){\line(0, -1){20}}
\put(105, 10){\line(0, -1){20}}
\put(67, -3){\tiny${m+1-i}$}

\put(100, -10){\line(0, -1){25}}
\put(88, -30){...}
\put(85, -10){\line(0, -1){25}}
\put(100, 10){\line(0, 1){25}}
\put(75, 10){\line(0, 1){25}}
\put(83, 25){...}
\put(110, -5){$=$}
\end{picture}
\begin{picture}(120, 80)(-5, -40)
\put(-6, -5){$k^2$}
\put(5, 10){\line(1, 0){45}}
\put(5, -10){\line(1, 0){45}}
\put(5, 10){\line(0, -1){20}}
\put(50, 10){\line(0, -1){20}}
\put(20, -3){\tiny${i-1}$}

\put(10, 10){\line(0, 1){25}}
\put(45, 10){\line(0, 1){25}}
\put(23, 25){...}

\put(10, -10){\line(0, -1){25}}
\put(25, -10){\line(0, -1){25}}
\put(13, -30){...}

\qbezier(30, -10)(55, -50) (80, -10)
\qbezier(45, -10)(55, -30) (65, -10)
\put(55, -29){.}
\put(55, -23){.}
\put(55, -26){.}
\put(53, -40){\tiny${k-1}$}

\put(60, 10){\line(1, 0){45}}
\put(60, -10){\line(1, 0){45}}
\put(60, 10){\line(0, -1){20}}
\put(105, 10){\line(0, -1){20}}
\put(72, -3){\tiny${m-i}$}

\put(100, -10){\line(0, -1){25}}
\put(88, -30){...}
\put(85, -10){\line(0, -1){25}}
\put(100, 10){\line(0, 1){25}}
\put(65, 10){\line(0, 1){25}}
\put(80, 25){...}
\put(110, -5){$+$}
\end{picture}
\begin{picture}(120, 80)(-20, -30)
\put(-15, 5){$\zeta_{i, k}$}
%\put(0, -20){\line(1, 0){45}}  \put(50, -20){\line(1, 0){45}}
%\put(0, -5){\line(1, 0){45}}   \put(50, -5){\line(1, 0){45}}
\put(0, -20){\line(1, 0){40}}  \put(55, -20){\line(1, 0){40}}
\put(0, -5){\line(1, 0){40}}   \put(55, -5){\line(1, 0){40}}
\put(0, 20){\line(1, 0){45}}   \put(50, 20){\line(1, 0){45}}
\put(0, 35){\line(1, 0){45}}   \put(50, 35){\line(1, 0){45}}
\put(45, 20){\line(0, 1){15}}  \put(95, 20){\line(0, 1){15}}
\put(0, 20){\line(0, 1){15}}   \put(50, 20){\line(0, 1){15}}
\put(0, -20){\line(0, 1){15}}  \put(55, -20){\line(0, 1){15}}
\put(40, -20){\line(0, 1){15}} \put(95, -20){\line(0, 1){15}}

\put(15, 25){\tiny${i-1}$}    \put(70, 25){\tiny${m-i}$}
\put(10, -15){\tiny${i-k}$} \put(70, -15){\tiny$j$}

\put(5, 35){\line(0, 1){10}}    \put(55, 35){\line(0, 1){10}}
\put(18, 40){...}               \put(68, 40){...}
\put(40, 35){\line(0, 1){10}}   \put(90, 35){\line(0, 1){10}}

\put(5, 20){\line(0, -1){25}}   \put(20, 20){\line(0, -1){25}}
\put(75, 20){\line(0, -1){25}}  \put(90, 20){\line(0, -1){25}}
\put(8, 5){...}                 \put(78, 5){...}

\put(5, -20){\line(0, -1){10}}  \put(90, -20){\line(0, -1){10}}
%\put(25, -20){\line(0, -1){10}} \put(60, -20){\line(0, -1){10}}
\put(35, -20){\line(0, -1){10}} \put(60, -20){\line(0, -1){10}}
\put(18, -25){...}                \put(67, -25){...}

\qbezier(25, 20)(48, -10) (70, 20)
\qbezier(40, 20)(48, 10) (55, 20)
\put(48, 12){.}
\put(48, 9){.}
\put(48, 6){.}
\put(52, 9){\tiny$k$}
\qbezier(25, -5)(48, 5) (70,-5)

\put(105, -5){,}
\end{picture}
\end{aligned}
\end{equation}
where $j={m+1-i-k}$ and $\zeta_{i, k}= \frac{1}{(i-k-1)!(m-i-k)!}$.
\end{lemma}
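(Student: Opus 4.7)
The plan is to verify the identity by direct computation, which is essentially a graphical reformulation of the calculations in the proofs of \cite[Corollary 5.2 and Theorem 5.10]{LZ4}. The key tools are the recursive decomposition of the antisymmetriser $\Sigma_{+1}(r) = A(1, r)$ from Lemma \ref{lem:Sigma-1}(1), together with the cap/cup absorption formulas of Lemma \ref{lem:Sigma-1}(2) and Lemma \ref{lem:Sigma}, all specialised to $\epsilon = +1$ and $\delta = m$.

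First I would decompose each of the two antisymmetriser boxes in the LHS by isolating the strand attached to the top arc: for $A(1, i)$ the distinguished strand is the rightmost (position $i$) and for $A(i+1, m+1)$ it is the leftmost (position $i+1$), and these form the two endpoints of the top arc. Applying Lemma \ref{lem:Sigma-1}(1) to each produces a straight-through piece, where the distinguished strand passes linearly across a smaller antisymmetriser, plus a correction piece with coefficient $\frac{\epsilon}{(r-2)!}$ that introduces a swap between the distinguished strand and the antisymmetriser.

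Substituting into the LHS gives four combinations. The straight-through on both sides, together with the top arc and the $k$ bottom arcs, creates a cap-cup configuration spanning both boxes; using Lemma \ref{lem:Sigma-1}(2) iteratively, or a single invocation of Lemma \ref{lem:Sigma}, this collapses to the first RHS diagram, with the scalar $k^{2}$ arising as a product of two factors of $k$, one from each antisymmetriser interacting with the bottom arcs. The mixed cases, where one side is straight and the other correction, combine to produce the second RHS diagram: the correction piece supplies precisely the lower row of antisymmetriser boxes of sizes $i-k$ and $j = m+1-i-k$, and the coefficient $\zeta_{i, k} = \frac{1}{(i-k-1)!(m-i-k)!}$ emerges from the $\frac{1}{(r-2)!}$ denominators of the decomposition combined with the combinatorial reductions arising from the $k$ bottom arcs.

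The main obstacle is the bookkeeping of the many terms produced by the two decompositions and the $k$-fold cap/cup absorption, and verifying that they collapse to precisely the two surviving contributions with the stated coefficients. The correction-correction case must also be shown either to vanish or to be absorbed into the previously accounted terms; this follows from the repeated application of the vanishing principle that an antisymmetriser kills any cap or cup wholly contained within its range. An alternative route that avoids the simultaneous treatment of all $k$ arcs is an induction on $k$: resolving the innermost bottom arc first via Lemma \ref{lem:Sigma-1}(2) reduces the case $k$ to $k-1$ with modified antisymmetriser sizes.
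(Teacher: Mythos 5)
Your overall strategy --- expand the two antisymmetrisers via Lemma \ref{lem:Sigma-1}(1) and sort the resulting four cross terms --- is the right starting point, and it is essentially how the paper establishes the one-arc case. But the way you assign the four terms to the two contributions on the right-hand side is wrong, and this is not a bookkeeping slip. In the straight$\times$straight term the top arc and the innermost bottom arc close into a free loop, so that term contributes the \emph{first} right-hand diagram with coefficient $\delta=m$, not $k^2$; the two mixed terms contribute further multiples of the first diagram, obtained by tracing the distinguished strand back through the smaller antisymmetriser; and the correction$\times$correction term, far from vanishing or being absorbed, is precisely the source of the two-layer second diagram --- its coefficient $\zeta_{i,k}$ visibly carries the product of the two $\frac{1}{(r-2)!}$-type denominators coming from the two applications of Lemma \ref{lem:Sigma-1}(1). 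For a single arc and boxes of general sizes $t,s$ these contributions assemble into $(m+2-t-s)$ times the reduced diagram plus $\frac{1}{(t-2)!(s-2)!}$ times the two-layer diagram, which is the paper's intermediate identity \eqref{eq:AfU-1}; with $t=i$, $s=m+1-i$ the scalar is $1=k^2$. For $k>1$ the simultaneous treatment of all the arcs does not collapse as you describe, because the remaining $k-1$ nested arcs stay attached to the reduced antisymmetrisers and interfere with the tracing of the distinguished strands.

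Your fallback of inducting on $k$ is much closer to what the paper actually does, but the step you propose --- resolving the innermost bottom arc via Lemma \ref{lem:Sigma-1}(2) --- is not available: that lemma computes the partial closure of a single strand onto the \emph{same} antisymmetriser, whereas the innermost arc here joins the two different antisymmetriser boxes. The paper's induction runs in the opposite direction: one attaches the \emph{outermost} arc at the bottom (composing with $I_{i-k-1}\otimes U\otimes I_{m-i-k}$) to the case-$k$ identity, and then applies the one-arc identity \eqref{eq:AfU-1} to the bottom layer of the two-layer term; the scalar produced there is $m+2-(i-k)-(m+1-i-k)=2k+1$, which combines with $k^2$ to give $(k+1)^2$. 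To repair your argument you should first prove the one-arc case for general box sizes and then set up this upward induction on $k$.
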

\begin{proof} When $k=0$, \eqref{eq:AfU} is an identity.

We use Lemma \ref{lem:Sigma-1}(1) twice to obtain
\begin{eqnarray}\label{eq:AfU-1}
\begin{aligned}
\begin{picture}(130, 80)(-10, -40)
\put(5, 10){\line(1, 0){45}}
\put(5, -10){\line(1, 0){45}}
\put(5, 10){\line(0, -1){20}}
\put(50, 10){\line(0, -1){20}}
\put(25, -3){$t$}

\put(10, 10){\line(0, 1){25}}
\put(35, 10){\line(0, 1){25}}
\put(18, 25){...}

\put(10, -10){\line(0, -1){25}}
\put(35, -10){\line(0, -1){25}}
\put(18, -30){...}

\put(100, -10){\line(0, -1){25}}
\put(83, -30){...}
\put(75, -10){\line(0, -1){25}}
\put(100, 10){\line(0, 1){25}}
\put(75, 10){\line(0, 1){25}}
\put(83, 25){...}

\put(60, 10){\line(1, 0){45}}
\put(60, -10){\line(1, 0){45}}
\put(60, 10){\line(0, -1){20}}
\put(105, 10){\line(0, -1){20}}
\put(80, -3){$s$}

\qbezier(45, 10)(55, 45)(65, 10)
\qbezier(45, -10)(55, -45)(65, -10)
\put(110, -5){$=$}
\end{picture}
%%%
%%%
\begin{picture}(130, 80)(-15, -40)
\put(-15, -5){$\psi_{t, s}$}
\put(5, 10){\line(1, 0){45}}
\put(5, -10){\line(1, 0){45}}
\put(5, 10){\line(0, -1){20}}
\put(50, 10){\line(0, -1){20}}
\put(20, -3){$t-1$}

\put(10, 10){\line(0, 1){25}}
\put(45, 10){\line(0, 1){25}}
\put(23, 25){...}

\put(10, -10){\line(0, -1){25}}
\put(45, -10){\line(0, -1){25}}
\put(23, -30){...}

\put(60, 10){\line(1, 0){45}}
\put(60, -10){\line(1, 0){45}}
\put(60, 10){\line(0, -1){20}}
\put(105, 10){\line(0, -1){20}}
\put(72, -3){$s-1$}

\put(100, -10){\line(0, -1){25}}
\put(80, -30){...}
\put(65, -10){\line(0, -1){25}}
\put(100, 10){\line(0, 1){25}}
\put(65, 10){\line(0, 1){25}}
\put(80, 25){...}
\put(110, -5){$+$}
\end{picture}
%%%
%%%
\begin{picture}(120, 80)(-25, -30)
\put(-15, 5){$\phi_{t, s}$}
\put(0, -20){\line(1, 0){45}}  \put(50, -20){\line(1, 0){45}}
\put(0, -5){\line(1, 0){45}}   \put(50, -5){\line(1, 0){45}}
%\put(0, -20){\line(1, 0){40}}  \put(55, -20){\line(1, 0){40}}
%\put(0, -5){\line(1, 0){40}}   \put(55, -5){\line(1, 0){40}}
\put(0, 20){\line(1, 0){45}}   \put(50, 20){\line(1, 0){45}}
\put(0, 35){\line(1, 0){45}}   \put(50, 35){\line(1, 0){45}}
\put(45, 20){\line(0, 1){15}}  \put(95, 20){\line(0, 1){15}}
\put(0, 20){\line(0, 1){15}}   \put(50, 20){\line(0, 1){15}}
\put(0, -20){\line(0, 1){15}}  \put(50, -20){\line(0, 1){15}}
\put(45, -20){\line(0, 1){15}} \put(95, -20){\line(0, 1){15}}

\put(10, 25){$t-1$}       \put(60, 25){$s-1$}
\put(10, -15){$t-1$}      \put(60, -15){$s-1$}

\put(5, 35){\line(0, 1){10}}    \put(55, 35){\line(0, 1){10}}
\put(18, 40){...}               \put(68, 40){...}
\put(40, 35){\line(0, 1){10}}   \put(90, 35){\line(0, 1){10}}

\put(5, -20){\line(0, -1){10}}  \put(90, -20){\line(0, -1){10}}

\put(5, 20){\line(0, -1){25}}   \put(20, 20){\line(0, -1){25}}
\put(75, 20){\line(0, -1){25}}  \put(90, 20){\line(0, -1){25}}
\put(8, 5){...}                 \put(78, 5){...}

\put(40, -20){\line(0, -1){10}} \put(55, -20){\line(0, -1){10}}
\put(18, -28){...}                \put(67, -28){...}

\qbezier(30, 20)(48, -5) (65, 20)
\qbezier(30, -5)(48, 10) (65,-5)

\put(105, -5){,}
\end{picture}
\end{aligned}
\end{eqnarray}
where
\[
\psi_{t, s}=m+2-t-s, \quad \phi_{t, s}=\frac{1}{(t-2)!(s-2)!}.
\]
The case $k=1$ of \eqref{eq:AfU} can be obtained by setting $t=i$ and $s=m+1-i$.

Now use induction on $k$. Post-composing $I_{i-k-1}\otimes U\otimes I_{m-i-k}$
to \eqref{eq:AfU} we obtain
\begin{equation*}
\begin{aligned}
\begin{picture}(120, 80)(5, -40)
\put(5, 10){\line(1, 0){45}}
\put(5, -10){\line(1, 0){45}}
\put(5, 10){\line(0, -1){20}}
\put(50, 10){\line(0, -1){20}}
\put(23, -3){\tiny$i$}

\qbezier(45, 10)(55, 30) (65, 10)
\put(10, 10){\line(0, 1){25}}
\put(35, 10){\line(0, 1){25}}
\put(18, 25){...}

\put(10, -10){\line(0, -1){25}}
\put(25, -10){\line(0, -1){25}}
\put(13, -30){...}

\qbezier(30, -10)(55, -50) (80, -10)
\qbezier(45, -10)(55, -30) (65, -10)
\put(55, -29){.}
\put(55, -23){.}
\put(55, -26){.}
\put(50, -40){\tiny${k+1}$}

\put(60, 10){\line(1, 0){45}}
\put(60, -10){\line(1, 0){45}}
\put(60, 10){\line(0, -1){20}}
\put(105, 10){\line(0, -1){20}}
\put(67, -3){\tiny${m+1-i}$}

\put(100, -10){\line(0, -1){25}}
\put(88, -30){...}
\put(85, -10){\line(0, -1){25}}
\put(100, 10){\line(0, 1){25}}
\put(75, 10){\line(0, 1){25}}
\put(83, 25){...}
\put(110, -5){$=$}
\end{picture}
\begin{picture}(120, 80)(-5, -40)
\put(-6, -5){$k^2$}
\put(5, 10){\line(1, 0){45}}
\put(5, -10){\line(1, 0){45}}
\put(5, 10){\line(0, -1){20}}
\put(50, 10){\line(0, -1){20}}
\put(20, -3){\tiny${i-1}$}

\put(10, 10){\line(0, 1){25}}
\put(45, 10){\line(0, 1){25}}
\put(23, 25){...}

\put(10, -10){\line(0, -1){25}}
\put(25, -10){\line(0, -1){25}}
\put(13, -30){...}

\qbezier(30, -10)(55, -50) (80, -10)
\qbezier(45, -10)(55, -30) (65, -10)
\put(55, -29){.}
\put(55, -23){.}
\put(55, -26){.}
\put(55, -40){\tiny$k$}

\put(60, 10){\line(1, 0){45}}
\put(60, -10){\line(1, 0){45}}
\put(60, 10){\line(0, -1){20}}
\put(105, 10){\line(0, -1){20}}
\put(72, -3){\tiny${m-i}$}

\put(100, -10){\line(0, -1){25}}
\put(88, -30){...}
\put(85, -10){\line(0, -1){25}}
\put(100, 10){\line(0, 1){25}}
\put(65, 10){\line(0, 1){25}}
\put(80, 25){...}
\put(110, -5){$+$}
\end{picture}
\begin{picture}(120, 80)(-20, -30)
\put(-15, 5){$\zeta_{i, k}$}
%\put(0, -20){\line(1, 0){45}}  \put(50, -20){\line(1, 0){45}}
%\put(0, -5){\line(1, 0){45}}   \put(50, -5){\line(1, 0){45}}
\put(0, -20){\line(1, 0){40}}  \put(55, -20){\line(1, 0){40}}
\put(0, -5){\line(1, 0){40}}   \put(55, -5){\line(1, 0){40}}
\put(0, 20){\line(1, 0){45}}   \put(50, 20){\line(1, 0){45}}
\put(0, 35){\line(1, 0){45}}   \put(50, 35){\line(1, 0){45}}
\put(45, 20){\line(0, 1){15}}  \put(95, 20){\line(0, 1){15}}
\put(0, 20){\line(0, 1){15}}   \put(50, 20){\line(0, 1){15}}
\put(0, -20){\line(0, 1){15}}  \put(55, -20){\line(0, 1){15}}
\put(40, -20){\line(0, 1){15}} \put(95, -20){\line(0, 1){15}}

\put(15, 25){\tiny${i-1}$}    \put(70, 25){\tiny${m-i}$}
\put(10, -15){\tiny${i-k}$} \put(70, -15){\tiny$j$}

\put(5, 35){\line(0, 1){10}}    \put(55, 35){\line(0, 1){10}}
\put(18, 40){...}               \put(68, 40){...}
\put(40, 35){\line(0, 1){10}}   \put(90, 35){\line(0, 1){10}}

\put(5, 20){\line(0, -1){25}}   \put(20, 20){\line(0, -1){25}}
\put(75, 20){\line(0, -1){25}}  \put(90, 20){\line(0, -1){25}}
\put(8, 5){...}                 \put(78, 5){...}

\put(5, -20){\line(0, -1){10}}  \put(90, -20){\line(0, -1){10}}
%\put(25, -20){\line(0, -1){10}} \put(60, -20){\line(0, -1){10}}
\put(20, -20){\line(0, -1){10}} \put(75, -20){\line(0, -1){10}}
\put(10, -25){...}                \put(78, -25){...}

\qbezier(25, 20)(48, -10) (70, 20)
\qbezier(40, 20)(48, 10) (55, 20)
\put(48, 12){.}
\put(48, 9){.}
\put(48, 6){.}
\put(52, 9){\tiny$k$}
\qbezier(25, -5)(48, 5) (70,-5)
\qbezier(25, -20)(48, -35) (70,-20)

\put(105, -5){.}
\end{picture}
\end{aligned}
\end{equation*}
By using \eqref{eq:AfU-1} in the bottom half of the second diagram on the right hand side, we
obtain \eqref{eq:AfU} for $k+1$, completing the proof.
\end{proof}

Following \cite[\S 4.2]{LZ4}, we introduce the elements of $B_{m+1}^{m+1}(m)$ below.
For $p=0,1,\dots,m+1$, let
\[
F_p:=A(1,p)A(p+1,m+1),
\]
where $F_0$ is interpreted as $A(1,m+1)$.
For $j=0,1,2,\dots, i$, define $e_i(j)=e_{i,i+1}e_{i-1,i+2}\dots e_{i-j+1,i+j}$.
Note that $e_i(0)=1$ by convention.
We have the following formulae for the $E_i$.
\begin{lemma}\label{lem:Ep}
For $i=0, 1, \dots, m+1$, let $min_i=min(i, m+1-i)$. Then
\begin{eqnarray}\label{eq:E-formula}
E_i=\sum_{j=0}^{min_i}(-1)^j c_i(j) \Xi_i(j) \quad\text{with}\quad \Xi_i(j)=F_i e_i(j) F_i,
\end{eqnarray}
where $c_i(j)=\left((i-j)!(m+1-i-j)!(j!)^2 \right)^{-1}$.
\end{lemma}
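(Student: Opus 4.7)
The plan is to derive the formula by reducing the diagrammatic description of $E_i$ in Figure \ref{Ep} to the algebraic form on the right-hand side, using the shuffle decomposition of $\Sigma(m+1)$ together with Lemma \ref{lem:AfU} as the principal engine.

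First, I would rewrite the central $\Sigma(m+1)$ box appearing in $E_i$ using the parabolic decomposition of $\Sym_{m+1}$ with respect to the Young subgroup $\Sym_i \times \Sym_{m+1-i}$:
\[
\Sigma(m+1) = \frac{1}{i!(m+1-i)!}\, F_i \Bigl(\sum_{w} (-1)^{|w|} w\Bigr) F_i,
\]
where $w$ ranges over a set of shuffle representatives. Substituting this into the diagrammatic definition of $E_i$ and grouping the shuffles by the number $j$ of pairs moved across the boundary between $\{1,\dots,i\}$ and $\{i+1,\dots,m+1\}$ yields a decomposition naturally indexed by $j \in \{0, 1, \dots, \min_i\}$, matching the range of the sum on the right-hand side. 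The $(-1)^j$ factor emerges here from the alternating sign attached to shuffles that move exactly $j$ pairs.

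Second, for each fixed $j$, I would simplify the grouped contribution by combining the shuffle with the cap/cup structure on the right of $E_i$ and applying Lemma \ref{lem:AfU} iteratively. Each application of that lemma converts a configuration of $F_i$ against $k$ nested cups into a linear combination of a similar configuration with $k-1$ nested cups (with factor $k^2$) and a completed diagram proportional to $F_i e_i(k) F_i = \Xi_i(k)$ (with factor $\zeta_{i,k}$). After performing this reduction on every shuffle class, only terms of the form $\Xi_i(j)$ survive, since $F_i$ absorbs any shuffles internal to the parabolic $\Sym_i \times \Sym_{m+1-i}$.

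The main obstacle will be verifying that the combinatorial coefficients combine to give precisely $c_i(j) = ((i-j)!(m+1-i-j)!(j!)^2)^{-1}$. This step is best handled by induction on $j$, carefully tracking how the shuffle count interacts with the recursive factors $k^2$ and $\zeta_{i,k} = ((i-k-1)!(m-i-k)!)^{-1}$ supplied by Lemma \ref{lem:AfU}; the two factors of $j!$ in the denominator of $c_i(j)$ reflect respectively the ordering of the $j$ crossings on the top and on the bottom of the resulting $\Xi_i(j)$. Two useful sanity checks along the way are: (i) the right-hand side is manifestly symmetric under $i \leftrightarrow m+1-i$, matching Lemma \ref{lemma:Ep-1}(1) which asserts $*E_i = E_{m+1-i}$; and (ii) multiplying the right-hand side by $F_i$ and using $F_i^2 = i!(m+1-i)! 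F_i$ together with Lemma \ref{lem:AfU} must reproduce the identity $F_i E_i = i!(m+1-i)! E_i$ of Lemma \ref{lemma:Ep-1}(2). These consistency checks essentially pin down $E_i$ within the span of the $\Xi_i(j)$ and provide confidence that the coefficient bookkeeping has been carried out correctly.
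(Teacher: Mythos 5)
Your strategy --- expand the antisymmetriser inside $E_i$ relative to the parabolic subgroup $\Sym_i\times\Sym_{m+1-i}$ and reduce the partial closure to the $\Xi_i(j)$ --- is genuinely different from the paper's, and as it stands it has gaps. First, the opening formula $\Sigma(m+1)=\frac{1}{i!(m+1-i)!}F_i\bigl(\sum_w(-1)^{|w|}w\bigr)F_i$ is not correct if $w$ runs over double coset representatives: the double cosets of $\Sym_i\times\Sym_{m+1-i}$ in $\Sym_{m+1}$ have stabilisers of varying order, and the correct expansion is $\Sigma(m+1)=\sum_{j=0}^{\min_i}(-1)^jc_i(j)\,F_iw_jF_i$ with $w_j$ a product of $j$ transpositions across the boundary (the $j$-dependent coefficient is in fact exactly $c_i(j)$, which is encouraging but must be stated correctly). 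More seriously, even after this correction you still have to evaluate the partial closure of each $F_iw_jF_i$ (the $m+1-i$ strands wrapped around the right of Figure \ref{Ep}) and show that it reproduces $\Xi_i(j)=F_ie_i(j)F_i$ with the claimed multiplicity; this is the combinatorial core of the lemma and your proposal explicitly defers it. Lemma \ref{lem:AfU} does not perform this reduction: it computes the effect of capping a product of two antisymmetrisers at positions $(i,i+1)$ against nested cups, i.e.\ it is tailored to evaluating $e_i\circ\Xi_i(k)$, not to evaluating closures of permutations composed with $F_i$.

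Second, the consistency checks you offer cannot pin down the coefficients. Since $F_i^2=i!(m+1-i)!\,F_i$, \emph{every} linear combination of the $\Xi_i(j)$ satisfies $F_iX=i!(m+1-i)!\,X$, so your check (ii) is vacuous, and the $i\leftrightarrow m+1-i$ symmetry says nothing about the dependence on $j$. The relation that actually determines the coefficients --- and the one the paper uses --- is $e_iE_i=0$ from Lemma \ref{lemma:Ep-1}(3). The paper first shows by a structural edge-count argument (using that $A(1,m+1)$ has only through strings) that $E_i=\sum_jx_i(j)\Xi_i(j)$ for some scalars, reads off $x_i(0)=c_i(0)$ from the terms of $A(1,m+1)$ not involving $s_i$, and then substitutes the expansion into $e_iE_i=0$ and applies Lemma \ref{lem:AfU} to obtain the two-term recurrence $(k+1)^2x_i(k+1)+(i-k)!(m+1-i-k)!\,\zeta_{i,k}\,x_i(k)=0$, which forces $x_i(k)=(-1)^kc_i(k)$. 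If you want to salvage your route, replace your checks by this relation: $e_iE_i=0$ is the missing ingredient that makes the coefficient bookkeeping unnecessary.
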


\begin{remark}
For $0\le i\le \left[\frac{m+1}{2}\right]$, the lemma states that the $E_i$ are the elements defined
in \cite[Definition 4.2]{LZ4} with the same notation.
\end{remark}

\begin{proof}
We have $*\Xi_i(j)=\Xi_{m+1-i}(j)$. For $i\le\left[\frac{m}{2}\right]$,
\[
*\left(\sum_{j=0}^{min_i}(-1)^j c_i(j)\Xi_i(j)\right) = \sum_{j=0}^{min_i}(-1)^j c_{m+1-i}(j)\Xi_{m+1-i}(j),
\]
since $c_i(j)=c_{m+1-i}(j)$. Therefore, equation \eqref{eq:E-formula} will hold for
all $i$ by Lemma \ref{lemma:Ep-1}(1), if we can show that it
holds for $0\le i\le \left[\frac{m}{2}\right]$.
This will be done in two steps.

(i).
We first show that for each $i\le \left[\frac{m}{2}\right]$, there exist scalars $x_i(j)$ such that
\[
\begin{aligned}
E_i=\sum_{j=0}^i x_i(j)\Xi_i(j).
\end{aligned}
\]

The case $i=0$ is obvious as we have $E_0=A(1, m+1)$. Thus we only need to consider the case with $i\ge 1$.

Let us label the vertices of $E_i$ (see Figure \ref{Ep}) in the bottom row by $1, 2, \dots, m+1$
from left to right,
and those in the top row by $1', 2', \dots, (m+1)'$ from left to right.
Let $L=\{1, 2, \dots, i\}$, $R=\{i+1, i+2, \dots, m+1\}$,
$L'=\{1', 2', \dots, i'\}$ and $R'=\{(i+1)', (i+2)', \dots, (m+1)'\}$.
Since $A(1, m+1)$ has through strings only,
a Brauer diagram in $E_i$ can only have the following types of edges
(an edge is represented by its pair of vertices)
\[
\begin{aligned}
(a, t)  \in L\times R, \quad
(a', t') \in L'\times R',\\
(a', b) \in L'\times L, \quad
(s', t)\in R'\times R,
\end{aligned}
\]
and the numbers of edges in $L\times R$ and in $L'\times R'$
must be equal.
Thus it follows Lemma \ref{lemma:Ep-1}(2) and
the antisymmetrising property of $A(1, i)$ and $A(i+1, m+1)$ that
$E_i$ is a linear combination of $\Xi_i(j)$.

(ii). To determine the scalar $x_i(0)$, we observe that
the terms in $A(1, m+1)$ which do not contain $s_i$ make up
$F_i=A(1, i)A(i+1, m+1)$. Note that
\[
\begin{picture}(80, 60)(0,0)

\put(0, 20){\line(1, 0){40}}
\put(0, 40){\line(1, 0){40}}
\put(0, 20){\line(0, 1){20}}
\put(40, 20){\line(0, 1){20}}
\put(18, 28){\tiny$p$}

\qbezier(10, 40)(55, 80)(70, 0)
\qbezier(30, 40)(45, 60)(50, 0)
\put(55, 5){...}

\qbezier(10, 20)(55, -20)(70, 60)
\qbezier(30, 20)(45, -0)(50, 60)
\put(55, 48){...}

\put(80, 25){$=$}
\end{picture}
\begin{picture}(50, 60)(-20,0)

\put(0, 20){\line(1, 0){40}}
\put(0, 40){\line(1, 0){40}}
\put(0, 20){\line(0, 1){20}}
\put(40, 20){\line(0, 1){20}}
\put(18, 28){\tiny$p$}

\put(10, 40){\line(0, 1){20}}
\put(15, 50){...}
\put(30, 40){\line(0, 1){20}}
\put(10, 20){\line(0, -1){20}}
\put(15, 10){...}
\put(30, 20){\line(0, -1){20}}

\put(45, 25){.}
\end{picture}
\]
Thus $x_i(0)\Xi_i(0)=A(1, i)A(i+1, m+1)$, and hence $x_i(0)= (i!(m+1-i)!)^{-1}=c_i(0)$.

Now we determine the $x_i(k)$ for all $k>0$. By Lemma \ref{lemma:Ep-1}(3), $e_i E_i=0$.
Using \eqref{eq:AfU} in this relation,
we obtain
\[
(k+1)^2 x_i(k+1) + (i-k)! (m+1-i-k)! \zeta_{i, k} x_i(k)=0, \quad  0\le k\le i.
\]
The recurrent relation with $x_i(0)=c_i(0)$ yields $x_i(k)=(-1)^j c_i(k)$.
\end{proof}

The following result is an easy consequence of Lemma \ref{lem:Ep}.
Recall the elements $X_{s, t}\in\Sym_{s+t}$ shown in Figure \ref{X}.
\begin{corollary}\label{cor:XEX}
For all $i=0, 1, \dots, m+1$, we have $X_{i, m+1-i} E_i X_{m+1-i, i}= E_{m+1-i}$.
\end{corollary}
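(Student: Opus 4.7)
The strategy is to expand $E_i$ using the formula of Lemma \ref{lem:Ep},
\[
E_i = \sum_{j=0}^{\min(i,\,m+1-i)} (-1)^j\, c_i(j)\, F_i\, e_i(j)\, F_i,
\]
and to verify term-by-term that conjugation by $X := X_{i,m+1-i}$ sends each summand $F_i e_i(j) F_i$ to $F_{m+1-i} e_{m+1-i}(j) F_{m+1-i}$. The coefficients are manifestly invariant under $i \leftrightarrow m+1-i$, so $c_i(j) = c_{m+1-i}(j)$ and $\min(i,m+1-i) = \min(m+1-i,i)$; consequently the term-by-term identification would complete the proof. Note that $X_{m+1-i,i} = X^{-1}$, so the claim is $X E_i X^{-1} = E_{m+1-i}$.

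Let $\pi \in \Sym_{m+1}$ be the permutation underlying $X$, which maps the block $\{1,\dots,i\}$ onto $\{m+2-i,\dots,m+1\}$ and the block $\{i+1,\dots,m+1\}$ onto $\{1,\dots,m+1-i\}$. First, a direct calculation gives $\pi F_i \pi^{-1} = F_{m+1-i}$: the factor $A(1,i)$ is sent to $A(m+2-i,m+1)$ and $A(i+1,m+1)$ is sent to $A(1,m+1-i)$; since these two antisymmetrizers have disjoint supports they commute and their product is $F_{m+1-i}$. Next, each factor $e_{i-k+1,\,i+k}$ of $e_i(j)$ is transformed to $e_{m+2-k,\,k}$, and since the arcs involved have pairwise disjoint supports the factors commute, giving $\pi e_i(j) \pi^{-1} = \prod_{k=1}^j e_{k,\,m+2-k}$. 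This is not literally $e_{m+1-i}(j) = \prod_{k=1}^j e_{m+2-i-k,\,m+1-i+k}$, but both are products of $j$ commuting arcs each having one endpoint in the ``left block'' $L := \{1,\dots,m+1-i\}$ and one endpoint in the ``right block'' $R := \{m+2-i,\dots,m+1\}$.

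The key step is then to establish $F_{m+1-i}\, \pi e_i(j) \pi^{-1}\, F_{m+1-i} = F_{m+1-i}\, e_{m+1-i}(j)\, F_{m+1-i}$. Choose a permutation $\rho = \sigma\tau$ with $\tau \in \Sym_L$ and $\sigma \in \Sym_R$ so that $\rho (\pi e_i(j) \pi^{-1}) \rho^{-1} = e_{m+1-i}(j)$; such a $\rho$ exists because the endpoints of both products respect the $L$--$R$ block decomposition. Using that $A(1,m+1-i)$ and $A(m+2-i,m+1)$ each absorb permutations of their own support with a sign, and that the two antisymmetrizers commute past permutations with disjoint support, one checks $F_{m+1-i}\rho = \mathrm{sgn}(\rho) F_{m+1-i}$ and $\rho^{-1} F_{m+1-i} = \mathrm{sgn}(\rho) F_{m+1-i}$. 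Hence for any diagram $D$,
\[
F_{m+1-i}\, \rho D \rho^{-1}\, F_{m+1-i} = \mathrm{sgn}(\rho)^2\, F_{m+1-i}\, D\, F_{m+1-i} = F_{m+1-i}\, D\, F_{m+1-i},
\]
and taking $D = \pi e_i(j) \pi^{-1}$ yields the required identity.

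The main obstacle is a matter of careful bookkeeping: checking that the chosen $\rho$ actually lies in $\Sym_L \times \Sym_R$, and tracking the two sign factors so that they square to $+1$. Once this is in place, summing over $j$ with $c_i(j) = c_{m+1-i}(j)$ assembles $X E_i X^{-1}$ into precisely $E_{m+1-i}$, completing the proof.
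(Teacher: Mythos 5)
Your proof is correct and follows essentially the same route as the paper: the paper likewise expands $E_i$ via Lemma \ref{lem:Ep}, asserts (pictorially) that $X_{i,m+1-i}\,\Xi_i(j)\,X_{m+1-i,i}=\Xi_{m+1-i}(j)$, and concludes from $c_i(j)=c_{m+1-i}(j)$. You have merely replaced the paper's ``easy to show pictorially'' step with an explicit algebraic verification via the permutation $\rho\in\Sym(L)\times\Sym(R)$ and the cancelling sign factors, which is a legitimate filling-in of that detail.
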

\begin{proof}
It is easy to show pictorially that $X_{i, M+1-i}\Xi_i(j) X_{m+1-i, i}= \Xi_{m+1-i}(j)$
for all $j\le i$. Since $c_i(j)=c_{m+1-i}(j)$, this proves the claim of the corollary.
\end{proof}

\subsection{The main theorem}

The following theorem is Theorem 4.3 in \cite{LZ4}, which is the main result of that paper.
\begin{theorem}[\cite{LZ4}] \label{thm:o-main}
The algebra map  ${F}_r^r: B_r^r(m)\longrightarrow \Hom_{\Or(V)}(V^{\otimes r}, V^{\otimes r})$
is injective if $r\le m$. If
$r>m$, the two-sided ideal $\Ker{F}_r^r$ of the Brauer algebra $B_r^r(m)$ is generated by
the element $E=E_\ell$ with $\ell=\left[\frac{m+1}{2}\right]$.
\end{theorem}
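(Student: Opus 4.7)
The plan is to mirror the proof strategy of Theorem \ref{thm:sp-main}, splitting the statement into three parts: injectivity for $r\le m$, the easy inclusion $\langle E_\ell\rangle_r\subset\Ker F_r^r$ for $r>m$, and the reverse inclusion. For injectivity, I would apply Lemma \ref{lem:transfrom} to reduce to studying $F_{2r}^0$ and then invoke Theorem \ref{thm:fft-sft}(2); since $2r\le 2m=2d$, Corollary \ref{cor:small-degree} gives $\langle\Sigma_{+1}(m+1)\rangle_{2r}^0=0$, so $\Ker F_{2r}^0=0$, as required.

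For the inclusion $\langle E_\ell\rangle_r\subset\Ker F_r^r$ when $r>m$, it suffices to verify $E_\ell\in\Ker F_{m+1}^{m+1}$. By the expansion of $E_\ell$ given in Lemma \ref{lem:Ep}, $E_\ell$ is built from the antisymmetrisers $F_\ell=A(1,\ell)A(\ell+1,m+1)$ sandwiching the idempotent $e_\ell(j)$; combined with the action of $e_i$ (Lemma \ref{lemma:Ep-1}(3)) and the antisymmetrising property, $E_\ell$ is in fact a multiple of $\Sigma_{+1}(m+1)$ in a natural sense. Since $F$ sends $\Sigma_{+1}(m+1)$ to the alternating projector onto $\Lambda^{m+1}(V)=0$, we have $F(E_\ell)=0$. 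An alternative, more computational route is to evaluate $\mathrm{tr}(F(E_\ell)/(\ell!(m+1-\ell)!))$ using the Jones trace of Lemma \ref{lem:jtrace} and a binomial identity, in exact analogy with the calculation of $\mathrm{tr}(F(\Phi)/(n+1)!)$ in Lemma \ref{lem:Phi}(\ref{lem:Phi-4}).

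The heart of the argument is the reverse inclusion. By Proposition \ref{lem:o-generat}, $\Ker F_r^r$ is generated as a two-sided ideal by $\{E_p:0\le p\le m+1\}$; by Corollary \ref{cor:XEX} and invertibility of $X_{p,m+1-p}$, it suffices to show $E_p\in\langle E_\ell\rangle_r$ for $0\le p\le\ell$. I would first prove this at $r=m+1$ by descending induction on $p$ starting from $p=\ell$ (the base case being trivial). The step from $E_p$ to $E_{p-1}$ uses the formula \eqref{eq:E-formula} of Lemma \ref{lem:Ep}, the recursive identity \eqref{eq:AfU} of Lemma \ref{lem:AfU}, and the vanishing $e_iE_p=0$ from Lemma \ref{lemma:Ep-1}(3). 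Schematically: one pre- and post-multiplies $E_p$ by appropriate symmetriser factors and, using the identities above to reorganise the cups and caps, exhibits $E_{p-1}$ as a scalar multiple of an element lying in $\langle E_p\rangle_{m+1}\subseteq\langle E_\ell\rangle_{m+1}$ plus terms that are already in the ideal.

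Once the base case $r=m+1$ is settled, the extension to $r>m+1$ proceeds by induction on $r$ in direct parallel with the symplectic argument: given a generator $E_p$ of $\Ker F_r^r$ (now sitting inside $B_{m+1}^{m+1}(m)\otimes I_{r-m-1}$ after a permutation), compose $E_p$ with $\Sigma_{+1}(m+1-p)$ on the remaining strands, apply the defining identity of Lemma \ref{lem:Ep} for $E_p$, and separate off a term lying in $\langle E_\ell\rangle_r$ plus terms involving strictly smaller diagrams that are handled by the inductive hypothesis (using Corollary \ref{cor:small-degree} to kill the spurious pieces that are annihilated by $F$). The principal obstacle is the descending induction in the base case $r=m+1$: controlling the rational coefficients $c_p(j)$ in the expansion of $E_p$ under the manipulations coming from \eqref{eq:AfU} requires careful combinatorial bookkeeping, and this is precisely the computation where the identities of Lemmas \ref{lem:AfU}, \ref{lem:Ep}, and \ref{lem:Sigma-1} are indispensable.
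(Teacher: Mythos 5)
Your proposal follows the paper's route: injectivity for $r\le m$ from Theorem \ref{thm:fft-sft}(2); the containment $E_\ell\in\Ker F_{m+1}^{m+1}$, which is immediate once one observes that $E_\ell$ lies in $\langle\Sigma_{+1}(m+1)\rangle_{m+1}^{m+1}$ (it is obtained from the antisymmetriser by composition with cups and caps, which is the precise version of your ``multiple of $\Sigma_{+1}(m+1)$ in a natural sense''); and the reverse inclusion via Proposition \ref{lem:o-generat}, Corollary \ref{cor:XEX}, and the descent $E_{p-1}\in\langle E_p\rangle$ for $1\le p\le\ell$ --- exactly the step the paper itself delegates to \cite[\S 7]{LZ4} (where Corollary \ref{cor:ann} is used), and which you, like the paper, leave as a sketch. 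The one structural difference is your closing induction on $r$: it is superfluous here, because Proposition \ref{lem:o-generat} already provides the same $r$-independent generators $E_p\in B_{m+1}^{m+1}(m)\subset B_r^r(m)$ for every $r>m$, unlike the symplectic case where the generators $D(p,q)$ genuinely depend on $r$ and the induction on $r$ in Theorem \ref{thm:sp-main} is unavoidable; once $E_p\in\langle E_\ell\rangle_{m+1}$ for all $p$, the statement for general $r$ follows immediately.
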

\begin{proof} Only the second part of the theorem needs explanation.
By Proposition \ref{lem:o-generat} and Corollary \ref{cor:XEX}, the elements
$E_i$ with $i=0, 1, \dots \ell=\left[\frac{m+1}{2}\right]$ generates $\Ker{F}_r^r$.
Using some general properties of the
symmetric group and Corollary \ref{cor:ann},  we showed in \cite[\S 7]{LZ4} that
$E_{i-1}$ is contained in the ideal generated by $E_i$ for each $i=1, \dots,\ell$.
The theorem follows.
\end{proof}

\section{The case of positive characteristic}\label{s:ss}

The following statement is an immediate consequence of \cite[Theorem 2.3]{RS}.
\begin{lemma}\label{lem:ss}
Let $n,r\in \Z_{>0}$. The following are equivalent for the Brauer algebras over $\Z$.
\begin{enumerate}
\item The Brauer algebra $B_r(n)$ is semisimple.
\item The Brauer algebra $B_r(-2n)$ is semisimple.
\item $r\leq n+1$.
\end{enumerate}
\end{lemma}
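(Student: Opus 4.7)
The plan is to deduce this lemma directly from Rui--Si's Theorem 2.3 in \cite{RS}, which gives a complete arithmetic characterization, over a field of characteristic zero, of the pairs $(r,\delta)$ for which the Brauer algebra $B_r(\delta)$ is semisimple. Since the three conditions in the lemma all concern semisimplicity of specific Brauer algebras (or a numerical threshold), the entire argument reduces to specializing this single criterion at $\delta = n$ and at $\delta = -2n$ and comparing the thresholds.

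First, I would apply Rui--Si's criterion with $\delta = n$. Unwinding their condition for a positive integer parameter, one finds that $B_r(n)$ becomes non-semisimple exactly at $r = n+2$; equivalently, $B_r(n)$ is semisimple if and only if $r \leq n+1$. This establishes (1) $\Leftrightarrow$ (3).

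Next, I would repeat the specialization with $\delta = -2n$. The Rui--Si condition, applied to a negative even parameter, yields the same threshold: $B_r(-2n)$ is semisimple precisely when $r \leq n+1$. This yields (2) $\Leftrightarrow$ (3). Chaining these two biconditionals then gives (1) $\Leftrightarrow$ (2) $\Leftrightarrow$ (3), completing the proof.

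The only substantive content is the coincidence that the thresholds at $\delta = n$ and $\delta = -2n$ agree, both being $n+1$; this is not an accident but reflects the parallel roles played by $\Or(V)$ with $\dim V = n$ and $\Sp(V)$ with $\dim V = 2n$ in Sections 5 and 6. The main ``obstacle'' (which is really just bookkeeping) is simply to unpack the precise form of Rui--Si's condition and verify the two threshold computations — everything else is immediate once Theorem 2.3 of \cite{RS} is granted, which is why the authors label the result an immediate consequence.
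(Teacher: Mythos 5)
Your proposal matches the paper exactly: the authors give no argument beyond citing Rui--Si's Theorem 2.3, and your plan of specializing their semisimplicity criterion at $\delta=n$ and $\delta=-2n$ and observing that both thresholds equal $n+1$ is precisely what is intended. The approach and the threshold computations are correct.
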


It follows from this that $n+1$ is the largest value of $r$ such that $B_r(n)$ and $B_r(-2n)$
are semisimple. The idempotents we have found are thus each in the `last' Brauer algebra
which is semisimple. This is in complete analogy with the situation in the Temperley-Lieb algebra
when $q$ is a root of unity, where the radical of the Jones trace function is the idempotent
corresponding to the trivial representation of the `last' semisimple Temperley-Lieb algebra
(see \cite[Cor. 3.7, Remark 3.8]{GL96}).

Note that our basic setup in this paper remains the same over the ring $\Z$ of integers.
Since we will deal with the orthogonal and symplectic groups
simultaneously in this section, we write the functor $F$ as $F_\epsilon: \cB(\epsilon m)\longrightarrow \cT_G(V)$,
and $F_k^l$ as $F_{\epsilon,k}^l$ for easy reference.
Recall that $m=\dim V$ and $\epsilon=-1$ if $G=Sp(V)$ and
$\epsilon=1$ if $G=O(V)$. We also set
$d=m/2$ if $\epsilon=-1$, and $d=m$ if $\epsilon=1$.

By Lemmas \ref{lem:Phi-Z} and \ref{lem:E-Z},
the element $\Phi$ defined by equation \eqref{Phi} and the elements
$E_k$ $(0\le k\le \left[\frac{m+1}{2}\right])$ of
Lemma \ref{lemma:Ep-1} are linear combinations of Brauer diagrams over $\Z$.

\begin{lemma}
We have $\Phi\in \Ker F_{-1, r}^r$ and $E_k\in \Ker F_{1, r}^r$ (for all $k$)
over any field $K$.
\end{lemma}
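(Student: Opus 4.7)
The plan is a straightforward reduction to characteristic zero by an integrality argument. Since $\Phi$ and each $E_k$ are linear combinations of Brauer diagrams with integer coefficients (Lemmas \ref{lem:Phi-Z} and \ref{lem:E-Z}), and the vanishing of their images under $F$ has already been established over any field of characteristic zero --- Lemma \ref{lem:Phi}(\ref{lem:Phi-4}) for $\Phi$, and the content of Proposition \ref{lem:o-generat} together with Theorem \ref{thm:o-main} for each $E_k$ --- what remains is simply to endow $F$ with an integral structure and invoke the fact that an integer matrix which vanishes over $\Q$ is zero.

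To carry this out I would fix a $\Z$-lattice $V_\Z \subset V$ on which the bilinear form is $\Z$-valued and non-degenerate: for $G = \Or(V)$ take $V_\Z = \Z^m$ with the standard inner product, and for $G = \Sp(V)$ take $V_\Z = \Z^{2n}$ in a Darboux basis for the standard symplectic form. In both cases the canonical element $c_0 = \sum_i b_i \otimes \bar b_i$ lies in $V_\Z \otimes_\Z V_\Z$, and each of the generators $P$, $\check C$, $\hat C$ of \eqref{P-C-C} has integer matrix entries in the standard basis. The relations of Lemma \ref{lem:PAU} all hold as identities with integer coefficients, so by Theorem \ref{thm:functor} the prescriptions \eqref{eq:F-generating} define an integral covariant functor $F_{\Z,\epsilon}$ on $\cB(\epsilon m)_\Z$ taking values in free $\Z$-modules of finite rank, and for any field $K$ the functor $F_{K,\epsilon}$ is recovered as $F_{\Z,\epsilon} \otimes_\Z K$ via the identification $V_K = V_\Z \otimes_\Z K$.

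Applying $F_{\Z,\epsilon}$ to $\Phi$ and to $E_k$ yields endomorphisms of $V_\Z^{\otimes(n+1)}$ and $V_\Z^{\otimes(m+1)}$ respectively, each given by a matrix with integer entries. Tensoring with $\Q$ produces the corresponding characteristic zero maps, both of which are zero by the results cited above. Hence the integer matrices $F_{\Z,-1}(\Phi)$ and $F_{\Z,1}(E_k)$ vanish, and base change gives $F_{K,-1}(\Phi) = 0$ and $F_{K,1}(E_k) = 0$ for every field $K$. For $r$ exceeding $n+1$ or $m+1$ respectively, the conclusion propagates via the natural embedding $b \mapsto b \otimes I_{r-s}$ described in \S\ref{sect:Brauer-algebra}, using the identity $F_{K,\epsilon}(b \otimes I_{r-s}) = F_{K,\epsilon}(b) \otimes \id_V^{\otimes(r-s)}$ from the tensor-respecting property of $F$. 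The only non-formal input is the integrality of $\Phi$ and $E_k$, which has already been proved; the remainder of the argument is purely formal base change, so no genuine obstacle remains.
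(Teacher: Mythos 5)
Your proof is correct, and it takes a genuinely different route from the paper's. The paper handles the two families differently: for the $E_k$ it observes that each $E_k$ factors through $\Sigma_{+1}(m+1)$, whose image under $F$ is the antisymmetriser on $m+1>\dim V$ tensor factors and hence vanishes in every characteristic; for $\Phi$ it argues structurally, invoking the semisimplicity of $B_{n+1}(-2n)$ over $K$ (Lemma \ref{lem:ss}) and the identification of $\Ker F_{-1,r}^r$ with the two-sided ideal attached to the trivial module, in which $\Phi$ sits as a central quasi-idempotent. Your uniform base-change argument replaces both: since $\Phi$ and the $E_k$ lie in the $\Z$-span of diagrams (Lemmas \ref{lem:Phi-Z} and \ref{lem:E-Z}) and $F$ can be realised over a $\Z$-lattice with integer structure constants, vanishing over $\Q$ --- supplied by Lemma \ref{lem:Phi}(\ref{lem:Phi-4}) and Proposition \ref{lem:o-generat} --- forces the integer matrices $F_{\Z,\epsilon}(\Phi)$, $F_{\Z,\epsilon}(E_k)$ to be zero, and hence their images after any base change. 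This buys something real: the paper's argument for $\Phi$ presupposes knowledge of the structure of $\Ker F_{-1,r}^r$ over $K$, which is uncomfortably close to what Theorem \ref{thm:ft-Z} is meant to establish, whereas your argument needs no positive-characteristic structure theory and works verbatim for every field, with no hypothesis on $\operatorname{char}(K)$. The one point worth flagging is that, as written, your argument covers only forms on $V_K$ obtained by base change from the chosen $\Z$-lattice. In the symplectic case this is no restriction (all non-degenerate alternating forms are equivalent), and in the orthogonal case one either fixes the standard integral form, as the paper implicitly does in \S\ref{s:ss}, or notes that the matrix entries of $F(D)$ for any diagram $D$ are integer polynomials in the Gram matrix and its inverse, so the vanishing, being an identity satisfied by all non-degenerate symmetric forms over $\C$, holds as a polynomial identity and therefore for every non-degenerate form over every field.
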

\begin{proof}
For the elements $E_k$, the claim immediately follows from their definition and
of Theorem \ref{thm:ft-Z} (2). It was also proved in \cite{LZ4}.

Next note that by Lemma \ref{lem:Phi-Z}, $\Phi$ is defined over $\Z$.
It follows from Lemma \ref{lem:ss} that $B_r(-2n)$ is semi-simple over $K$, and $\Ker F_{-1, r}^r$ is the
2-sided ideal of $B_r(-2n)$ corresponding to the one-dimensional simple module. The element $\Phi$ is
a central quasi-idempotent contained in this 2-sided ideal.
\end{proof}

The following result is a generalisation of Theorem \ref{thm:fft-sft} to fields of positive characteristic.
\begin{theorem}\label{thm:ft-Z}
Over any field $K$ of characteristic $\cp\geq m+2$,
\begin{enumerate}
\item  the functor $F_\epsilon: \cB(\epsilon m)\longrightarrow \cT_G(V)$ is full;
\item
the map $F_{\epsilon,k}^\ell$ is injective if $k+\ell\le 2d$,
and $\Ker{F}_{\epsilon,k}^\ell=\langle \Sigma_\epsilon(m+1)\rangle_k^\ell$
if $k+\ell> 2d$.
\end{enumerate}
\end{theorem}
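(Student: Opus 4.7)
The proof proceeds by reducing to Theorem \ref{thm:fft-sft} via a specialisation argument, exploiting that all the essential structures admit $\Z$-forms. One takes $V=\Z^m$ with a standard symmetric or skew-symmetric bilinear form, so that $G$ is an integral group scheme, the Brauer algebra $B_k^\ell(\epsilon m)$ is free over $\Z$ on the Brauer-diagram basis, and the functor $F_\epsilon$ is defined on generators by \eqref{eq:F-generating} independently of the base field.

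For part (1), the plan is first to reduce to the case $\ell=0$ using the isomorphism $B_k^\ell(\epsilon m)\cong B_{k+\ell}^0(\epsilon m)$ of Corollary \ref{isomorphism}; this isomorphism is valid in any characteristic because the operators $R,L$ are constructed purely from Brauer diagrams. It then suffices to show that every $G$-invariant linear functional on $V^{\otimes(k+\ell)}$ is a $K$-linear combination of the $\gamma_\alpha$ of \eqref{eq:lnr-fctn}, i.e.\ that Corollary \ref{cor:fft} holds over $K$. This is the classical first fundamental theorem for $\Or(V)$ and $\Sp(V)$ in positive characteristic; under the hypothesis $\cp\geq m+2$ the standard polarisation-restitution argument goes through, since $(m+1)!$ is invertible in $K$.

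For part (2), the inclusion $\langle\Sigma_\epsilon(m+1)\rangle_k^\ell\subseteq\Ker{F}_{\epsilon,k}^\ell$ is immediate: $F_\epsilon(\Sigma_\epsilon(m+1))$ is, up to sign, the antisymmetriser on $V^{\otimes(m+1)}$, which vanishes because $\Lambda^{m+1}V=0$ regardless of characteristic. For the reverse inclusion, the proof of Theorem \ref{thm:fft-sft}(2) transfers essentially verbatim once the FFT (Corollary \ref{cor:fft}) and the SFT (Theorem \ref{thm:sft}) are available over $K$: when $k+\ell>2d$, every $K$-linear relation among the $\gamma_\alpha$ is a consequence of the alternating relations of Lemma \ref{lem:gammainker}, and each such relation is the image under $F_\epsilon$ of the explicit element $D\circ\sigma\circ\Sigma_\epsilon(m+1)\circ\sigma\inv$ in $\langle\Sigma_\epsilon(m+1)\rangle_k^\ell$; when $k+\ell\leq 2d$, the functionals $\gamma_\alpha$ are $K$-linearly independent, so $F_{\epsilon,k}^\ell$ is injective.

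The main obstacle will be justifying that the classical FFT and SFT for the orthogonal and symplectic groups carry over to characteristic $\cp\geq m+2$. The hypothesis is the natural one: it ensures that $(m+1)!$ and all related factorials appearing in the characteristic-zero projection-operator arguments are invertible in $K$, so that the dimension counts transfer from characteristic zero and no exotic torsion arises in the relevant integral models at the prime $\cp$. Granted these classical results over $K$, the argument yields Theorem \ref{thm:ft-Z} directly.
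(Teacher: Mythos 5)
There is a genuine gap, and it sits exactly where you locate "the main obstacle": you reduce the theorem to the assertion that the linear FFT (Corollary \ref{cor:fft}) and SFT (Theorem \ref{thm:sft}) hold over a field of characteristic $\cp\geq m+2$, and then you do not prove that assertion. Saying that "the standard polarisation--restitution argument goes through" and that "no exotic torsion arises in the relevant integral models" is a restatement of what must be established, not an argument for it; indeed, by the very reduction you describe (via Lemma \ref{lem:transfrom}, with $\ell=0$, $k=2r$), the linear SFT over $K$ \emph{is} part (2) of the theorem, so as written the proof is circular. Invertibility of $(m+1)!$ guarantees that the elements $\Sigma_\epsilon(m+1)$ and the relations of Lemma \ref{lem:gammainker} make sense and are nonzero, but it does not by itself show that the space of $G$-invariants on $V^{\otimes 2r}$ has the same dimension over $K$ as over $\C$, which is the real content.

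The paper supplies this missing input by two different external devices. In the orthogonal case it invokes \cite[Theorem 9.4]{LZ4}, which rests on Richman's theorem on vector invariants in positive characteristic \cite[Prop.~21]{Ri}. In the symplectic case it works over $R=\Z[((m+1)!)^{-1}]$ and uses the fact (from \cite{ALZ}) that $V_R^{\ot r}$ is a tilting module and that endomorphism algebras of tilting modules commute with base change, $\End_{\mf G_K}(M\ot_R K)\simeq\End_{\mf G_R}(M)\ot_R K$; combined with Lemma \ref{lem:Phi-Z} this gives the dimension count $\dim_K\End_{\mf G_K}(V_K^{\ot r})=\dim_\C\bigl(B_r(-m)/\langle\Phi\rangle\bigr)$, from which both fullness and the identification of the kernel follow at once. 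To repair your proof you would need to import one of these (or an equivalent) results; the purely diagrammatic and formal parts of your reduction are fine and do match the characteristic-zero argument, but they cannot carry the theorem on their own.
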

\begin{proof}
In the orthogonal case, this was proved in \cite[Theorem 9.4]{LZ4} as an application of
\cite[Prop. 21]{Ri}. Although the symplectic case is surely in the literature, we have been unable to find it,
and therefore provide the following sketch of the argument, which may be found in \cite{ALZ}.
Note that it provides a proof of the
second fundamental theorem in positive characteristic for the symplectic groups.

Let $R=\Z[((m+1)!)\inv]$. Then we may consider the symplectic Lie algebra $\mf G_R$ over $R$,
and the corresponding $R$-forms $V_R$ and $B_R=(B_r(-m))_R$. Note that by Lemma \ref{lem:Phi-Z}
we may regard $\Phi$ as an element of $B_R$. It is shown in \cite{ALZ}, that
if $M$ is a tilting module for $\mf G_R$ and $K$ is a field with $\phi:R\to K$ a ring homomorphism,
then $\End_{\mf G_K}(M\ot_R K)\simeq \End_{\mf G_R}(M)\ot_R K$. It also follows from
{\it loc.~cit.} that $V_R\ot V_R^*$ is a tilting module. This implies (cf. \cite[Cor. 3.4]{ALZ})
that $\dim_K (B_R/\langle\Phi\rangle)\ot_R K=\dim_\C(B_r(-m)/\langle\Phi\rangle)=
\dim\End_{\mf G_K}(V_K^{\ot r})$, and the result follows.
\end{proof}

\begin{scholium}\label{cor:poschar} Let $K$ be a field of characteristic $\cp\ge m +2$.
Then the kernel of the algebra homomorphism $F_{\epsilon, r}^r: B_r(\epsilon m)
\longrightarrow \End_G(V^{\otimes r})$ as a two-sided ideal in the Brauer algebra
is generated by $\Phi$ in the case of the symplectic group $(i.e., \epsilon=-1)$,
and by $E=E_\ell$ with $\ell=\left[\frac{m+1}{2}\right]$ in the case of the
orthogonal group $(i.e., \epsilon=1)$.
\end{scholium}

\begin{remark}
Recent results of Hu and Xiao show that Scholium \ref{cor:poschar}
is valid for all fields $K$ such that $\cp>2$.
\end{remark}

\section{Quantum analogues.}

\subsection{Background}
Let $\Uq^+$ (resp. $\Uq^-$) be the quantised enveloping algebra in the sense of
\cite[\S 6]{LZ1} of the Lie algebra $\fo_m(\C)$ (see \cite[8.1.2]{LZ1} for the definition)
(resp. $\fsp_m(\C)$), over the field $\CK=\C(q)$, where in the latter case we require
that $m=2n$ is even. Write $\CA_q$ for the subring of $\C(q)$ consisting of
rational functions with no pole at $q=1$. Denote by $V_q=\CK^m$ the quantum analogue of the
natural representation of $\Uq$. The study of the endomorphism algebras
$\End_\Uq(V_q^{\ot r}$ is closely analogous to the classical case we have been
considering, which may be thought of as the limit as $q\to 1$ of the quantum case,
in a way we shall shortly make precise.

In particular, there are homomorphisms from certain specialisations of the
Birman-Murakami-Wenzl algebra $\BMW_r(q)$ to $\End_\Uq(V_q^{\ot r})$, and the classical case
is essentially the limit of the quantum case in the sense that $\lim_{q\to 1}\BMW_r(q)
=B_r$, the Brauer algebra. Let us recall the details (see \cite[\S 4]{LZ2}).
Let $y,z$ be indeterminates over
$\C$ and write $\CA=\C[y^{\pm 1},z]$.
The BMW algebra $BMW_r(y,z)$ over $\CA$ is the associative
$\CA$-algebra with generators $g_1^{\pm 1},\dots,g_{r-1}^{\pm 1}$
and $e_1,\dots,e_{r-1}$, subject to the following relations:

The braid relations for the $g_i$:
\begin{equation}\label{braidgi}
\begin{aligned}
g_ig_j&=g_jg_i\text{ if }|i-j|\geq 2\\
g_ig_{i+1}g_i&=g_{i+1}g_ig_{i+1} \text{ for }1\leq i\leq r-1;\\
\end{aligned}
\end{equation}
The Kauffman skein relations:
\begin{equation}\label{kauffman}
g_i-g_i\inv=z(1-e_i)\text { for all }i;
\end{equation}
The de-looping relations:
\begin{equation}\label{delooping}
\begin{aligned}
&g_ie_i=e_ig_i=ye_i;\\\
&e_ig_{i-1}^{\pm 1}e_i=y^{\mp 1}e_i;\\
&e_ig_{i+1}^{\pm 1}e_i=y^{\mp 1}e_i.\\
\end{aligned}
\end{equation}

The next four relations are easy consequences of the previous three.
\begin{eqnarray}
&&e_ie_{i\pm 1}e_i=e_i; \label{bmwtl}\\
&&(g_i-y)(g_i^2-zg_i-1)=0; \label{cubic}\\
&&ze_i^2=(z+y\inv -y)e_i,\quad  %\text{ where }\delta=1+\frac{y\inv -y}{z};
\label{esquared}\\
&&-yze_i=g_i^2-zg_i-1. \label{equadg}
\end{eqnarray}

It is easy to show that $BMW_r(y,z)$ may be defined using the
relations (\ref{braidgi}), (\ref{delooping}), (\ref{cubic}) and
(\ref{equadg}) instead of (\ref{braidgi}), (\ref{kauffman}) and
(\ref{delooping}), i.e. that (\ref{kauffman}) is a consequence of
(\ref{cubic}) and (\ref{equadg}).

\subsection{Specialisations and integral forms}

Now in both the orthogonal and symplectic cases, $V_q$ is the simple
$\Uq$-module correponding to the highest weight $\ve_1$ using
the standard notation for the weights as in \cite{Bour}, and we have
the following decomposition of $V_q^{\ot 2}$:
\be\label{eq:square}
V_q\ot V_q=L_{2\ve_1}\oplus L_{\ve_1+\ve_2}\oplus L_0,
\ee
where $L_\lambda$ is the simple module corresponding to the dominant weight
$\lambda$, and $L_0$ is the trivial module. The eigenvalues of the $R$-matrix
$\check R$ on these respective components are as follows (see \cite[(6.12)]{LZ1}):
$$
\begin{aligned}
\Uq(\fo_m)&:q; -q\inv; q^{1-m}\\
\Uq(\fsp_m)&:q; -q\inv; -q^{-1-m}\\
\end{aligned}
$$

Now define two $\C$-algebra homomorphisms $\psi^{\pm}:\CA\to\CA_q$ as follows.
$\psi^+(y)=q^{1-m}$, $\psi^+(z)=q-q\inv$, $\psi^-(y)=-q^{-1-m}$, $\psi^-(z)=q-q\inv$.
We then obtain two $\CA_q$-algebras $\BMW_r^{\pm}(q):=\CA_q\ot_{\psi^{\pm}}\BMW_r(y,z)$,
and we write $\BMW_r^{\pm}(\CK):=\CK\ot_\iota\BMW_r^{\pm}(q)$, where $\iota$ is
the inclusion of $\CA_q$ into $\CK$.

It follows from \eqref{esquared} that in these two specialisations, we have
$e_i^2=\delta^{\pm}(q)e_i$, where $\delta^+(q)=[m-1]_q+1$ and
$\delta^-(q)=-([m+1]_q-1)$. Here we use the standard notation for $q$-numbers:
for any integer $t$, $[q]_t=\frac{q^t-q^{-t}}{q-q\inv}$.

It is a consequence of \cite[Theorem 7.5]{LZ1} that we have surjective homomorphisms
\be\label{eq:bmwfft}
\BMW_r^{\pm}(\CK)\overset{\eta_q}{\lr} \End_{\Uq^{\pm}}(V_q^{\ot r}).
\ee

To relate the above statement to the classical ($q=1$) case,
it was shown in \cite[\S 8.2]{LZ1} that $\Uq$ and the modules $V_q^{\ot r}$
have $\CA_q$-forms $\Uq(\CA_q)$, $V_q^{\ot r}(\CA_q)$ such that
$\Uq(\CA_q)$ acts on $V_q^{\ot r}(\CA_q)$, and the projections to the components
in \eqref{eq:square} are defined over $\CA_q$, so that the decomposition \eqref{eq:square}
is compatible with the $\CA_q$ forms. We may therefore take $\lim_{q\to 1}:=\C\ot_{\psi_1}-$
of all $\CA_q$-modules in \eqref{eq:bmwfft}, where $\psi_1:\CA_q\to \C$ takes $q$ to $1$.
It is well known that $\lim_{q\to 1}(\Uq^\ep)=\fsp_m(\C)$
if $\ep=-1$, and $\fo_m(\C)$ if $\ep=+1$, and that $\lim_{q\to 1}(\BMW_r^\ep(q))=B_r(\ep m)$.
In the proof of the next result we shall make
extensive use of the cellular structure of $\BMW_r^\ep(q)$ and its relationship to the
cellular structure of $B_r(\ep m)$, as described in \cite[Proposition 7.1]{LZ2}.

We therefore recall the following facts from {\it loc.~cit.}.
\begin{lemma}\label{lem:cell}(cf. \cite[Proposition 7.1]{LZ2})
\begin{enumerate}
\item For each $r$, the algebras $\BMW_r^\ep(q))$ and $B_r(\ep m)$ have a cellular structure
with the same cell datum $(\Lambda,M,C)$.
\item The structure constants of $B_r(\ep m)$ are obtained from those of
$\BMW_r^\ep(q))$ by putting $q=1$.
\item For each $\lambda\in\Lambda$, denote the cell module of $\BMW_r^\ep(q))$
by $W_q(\lambda)$ and that of $B_r(\ep m)$ by $W(\lambda)$. Then $W(\lambda)=
\lim_{q\to 1}W_q(\lambda)(=\C\ot_{\psi_1}W_q(\lambda)$, the Gram matrix
of the canonical form on $W(\lambda)$ is obtained from that of $W_q(\lambda)$
by setting $q=1$, as is the matrix of
of $\lim_{q\to 1}b \in B_r(\ep m)$ from that of $b$.
\end{enumerate}
\end{lemma}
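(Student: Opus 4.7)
The plan is essentially to reduce everything to the explicit construction carried out in \cite[Proposition 7.1]{LZ2}, and then to supply the (formal) argument that this construction interacts well with the specialisation $q\to 1$.

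First I would recall that both $B_r(\ep m)$ and $\BMW_r^\ep(q)$ are known to be cellular in the sense of \cite{GL96}, with cell datum $(\Lambda,M,C)$ where $\Lambda=\{(k,\lambda)\mid 0\le k\le [r/2],\ \lambda\vdash r-2k\}$, partially ordered by dominance on $(k,\lambda)$, $M(k,\lambda)$ the set of standard tableaux of shape $\lambda$ together with a choice of $k$ horizontal arcs above and below, and $C$ the Murphy-type basis constructed in \cite[\S 7]{LZ2} out of products of the generators $g_i^{\pm 1}$, $e_i$ (for BMW) and $s_i$, $e_i$ (for Brauer). The point is that these basis elements $c^\lambda_{\mathfrak{s}\mathfrak{t}}$ are defined by the same formal words in the two families of generators; since the defining relations of $\BMW_r^\ep(q)$ reduce to those of $B_r(\ep m)$ under $\psi_1$, the basis elements lie in the $\CA_q$-form $\BMW_r^\ep(\CA_q)$ and specialise to the corresponding basis of $B_r(\ep m)$. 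This yields (1).

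For (2), I would fix the cellular basis $\{c^\lambda_{\mathfrak{s}\mathfrak{t}}\}$ of $\BMW_r^\ep(\CA_q)$ and write
\[
c^\lambda_{\mathfrak{s}\mathfrak{t}}\cdot c^\mu_{\mathfrak{u}\mathfrak{v}}
=\sum_{(\nu,\mathfrak{x},\mathfrak{y})} r_{(\nu,\mathfrak{x},\mathfrak{y})}(q)\,c^\nu_{\mathfrak{x}\mathfrak{y}},
\]
where a priori the structure constants lie in $\CK$. One then has to verify that $r_{(\nu,\mathfrak{x},\mathfrak{y})}(q)\in\CA_q$, which is the integrality statement from \cite[Proposition 7.1]{LZ2}; the corresponding structure constants for $B_r(\ep m)$ are then $\psi_1(r_{(\nu,\mathfrak{x},\mathfrak{y})}(q))$ by the functoriality of the cellular basis construction.

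Finally, (3) is formal from (1) and (2). The cell module $W_q(\lambda)$ is by definition the quotient of the $\CA_q$-span of $\{c^\lambda_{\mathfrak{s}\mathfrak{t}}\mid \mathfrak{t}\in M(\lambda)\}$ (for a fixed $\mathfrak{s}$) by the intersection with the ideal spanned by basis elements of strictly lower index. Applying $\C\ot_{\psi_1}-$ to this exact sequence gives $W(\lambda)$, since tensor product is right exact and the basis in each degree specialises to a basis. The Gram matrix of the canonical bilinear form is, by definition of the cellular structure, determined by the structure constants of certain products $c^\lambda_{\mathfrak{s}\mathfrak{t}}c^\lambda_{\mathfrak{u}\mathfrak{v}}$ modulo lower terms, so (2) gives the claimed behaviour of Gram matrices. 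The claim about $\lim_{q\to 1}b$ for $b\in \BMW_r^\ep(\CA_q)$ acting on $W_q(\lambda)$ is the same statement, applied to the action rather than to inner multiplication.

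The main obstacle is the integrality assertion underlying (1) and (2), namely that the Murphy-style cellular basis of $\BMW_r^\ep(\CK)$ actually lives in the $\CA_q$-form and that its multiplication constants are in $\CA_q$. This is not proved here; the plan is to invoke \cite[Proposition 7.1]{LZ2}, and then only the routine specialisation arguments above are needed to transport the conclusion to the form stated in the lemma.
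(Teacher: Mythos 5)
Your proposal is correct and takes essentially the same route as the paper: the paper gives no proof at all, simply recalling these facts from \cite[Proposition 7.1]{LZ2}, and you likewise rest the substantive content (integrality of the cellular basis and its structure constants over $\CA_q$) on that citation. The additional specialisation bookkeeping you supply for parts (2) and (3) is routine and sound, since all the relevant $\CA_q$-modules are free and the bases specialise to bases under $\psi_1$.
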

The main result of this section is the following.

\begin{theorem}\label{thm:quantumlift}
(i) With notation as above, suppose $\Phi$ is an idempotent in $B_r(\ep m)$
such that the ideal $\langle \Phi\rangle$ is equal to
$\Ker(\eta:B_r(\ep m)\lr \End_G(V^{\ot r}))$. Suppose that $\Phi_q\in\BMW_r^\ep(q)$
is such that
\begin{enumerate}
\item $\Phi_q^2=f(q)\Phi_q$ where $f(q)\in\CA_q$.
\item $\lim_{q\to 1} \Phi_q=c\Phi$, where $c\neq 0$.
\end{enumerate}

Then $\Phi_q$ generates $\Ker (\eta_q:\BMW_r^\ep(q)\lr\End_\Uq(V_q^{\ot r}))$.

(ii) In the symplectic case, $\BMW_{d+1}^-(\CK)$ is semisimple, and
the kernel of $\eta_q$ is generated by the idempotent
corresponding to the trivial representation of $\BMW_{d+1}^-(\CK)$, where $m=2d$.

(iii) In the orthogonal case, there is an idempotent in $\BMW_{m+1}^+(q)$ which generates
$\Ker(\eta_q)$.
\end{theorem}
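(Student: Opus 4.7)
The plan uses the matching cellular structures of $\BMW_r^\ep(q)$ and $B_r(\ep m)$ (Lemma \ref{lem:cell}) together with a trace/specialisation argument. Part (i) is the crux; (ii) and (iii) follow by exhibiting suitable $\Phi_q$.

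For part (i), first note that squaring $\Phi_q^2 = f(q)\Phi_q$ and specialising at $q=1$ using $\Phi^2 = \Phi$ forces $f(1) = c \neq 0$; hence $1/f(q) \in \CA_q$ and $\hat\Phi_q := \Phi_q/f(q)$ is an $\CA_q$-integral idempotent specialising to $\Phi$. To show $\Phi_q \in \Ker\eta_q$, observe that $\eta_q(\hat\Phi_q)$ is an idempotent $\CK$-linear endomorphism of $V_q^{\ot r}$, so its $\CK$-trace equals its rank, a non-negative integer. But this trace lies in $\CA_q$ (since $\eta_q$ is defined over $\CA_q$ via the $\CA_q$-forms of \cite[\S 8.2]{LZ1}) and specialises at $q=1$ to $\Tr(\eta(\Phi), V^{\ot r}) = 0$, forcing it to be $0$; an idempotent of trace zero is zero, so $\eta_q(\hat\Phi_q) = 0$.

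To finish (i), compare dimensions. Since the multiplicities in $V_q^{\ot r}$ and $V^{\ot r}$ agree (standard $R$-matrix / quantum character theory), $\dim_\CK\End_\Uq(V_q^{\ot r}) = \dim_\C\End_G(V^{\ot r})$; combined with $\dim_\CK\BMW_r^\ep(\CK) = \dim_\C B_r(\ep m)$ and surjectivity of $\eta_q, \eta$, this yields $\dim_\CK\Ker\eta_q = \dim_\C\langle\Phi\rangle =: N$. Given a $\C$-basis $\{u_i\Phi v_i\}_{i=1}^N$ of $\langle\Phi\rangle$, lift each $u_i, v_i$ to $\tilde u_i, \tilde v_i \in \BMW_r^\ep(q)$ via the common cellular basis of Lemma \ref{lem:cell}. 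The elements $\tilde u_i\Phi_q\tilde v_i \in \langle\Phi_q\rangle$ specialise at $q=1$ to $c\, u_i\Phi v_i$; since $\CA_q$ is local with residue field $\C$, $\C$-independence of the specialisations forces $\CA_q$-, and hence $\CK$-, independence of the lifts. Thus $\dim_\CK\langle\Phi_q\rangle \ge N$, and combined with $\langle\Phi_q\rangle \subseteq \Ker\eta_q$, we obtain equality.

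For (ii) and (iii), the Brauer algebras $B_{d+1}(-2d)$ and $B_{m+1}(m)$ are semisimple by Lemma \ref{lem:ss}, so $\Ker\eta$ is generated in each by a central idempotent (which in case (ii) is precisely $\Phi/(n+1)!$ of Corollary \ref{cor:uniqueness-Phi}). Since $\CA_q$ is a discrete valuation ring with residue field $\C$, standard Henselian idempotent lifting along the specialisation $\BMW_r^\ep(q) \to B_r(\ep m)$ produces an idempotent $\Phi_q \in \BMW_r^\ep(q)$ with the required specialisation, and part (i) applied to this pair completes the argument. The hardest step throughout is the cellular lifting used to establish the dimension inequality in part (i): one must invoke Lemma \ref{lem:cell} carefully, exploiting $\CA_q$-freeness and the local structure of $\CA_q$, in order to transfer linear independence reliably across specialisation.
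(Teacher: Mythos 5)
Your part (i) is correct, and the key step is argued by a genuinely different route from the paper. To show $\Phi_q\in\Ker(\eta_q)$ the paper runs a $(q-1)$-adic descent: from $\lim_{q\to 1}\Phi_qM_q=c\Phi M=0$ it gets $\Phi_qM_q\subseteq (q-1)M_q$, and then, using $\Phi_q^2=f(q)\Phi_q$ with $f(1)=c\neq 0$, bootstraps to $\Phi_qM_q\subseteq\bigcap_i(q-1)^iM_q=0$. Your trace argument — $\eta_q(\Phi_q/f(q))$ is an idempotent whose trace is a non-negative integer lying in $\CA_q$ and specialising to $\Tr(\eta(\Phi))=0$, hence zero — is equally valid in characteristic zero and arguably cleaner, since it avoids the induction. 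The dimension-counting half of your (i) (transferring linear independence of $\{u_i\Phi v_i\}$ across the specialisation using the common cellular basis and the locality of $\CA_q$) is a correct fleshing-out of the inequality $\rank_{\CA_q}\langle\Phi_q\rangle\geq\dim_\C\langle\Phi\rangle$ that the paper asserts from Lemma \ref{lem:cell}.

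Parts (ii) and (iii) have a genuine gap. You invoke ``standard Henselian idempotent lifting'' along $\BMW_r^\ep(q)\to B_r(\ep m)$, but $\CA_q=\C[q]_{(q-1)}$ is a non-complete discrete valuation ring and is \emph{not} Henselian, so idempotent lifting into the $\CA_q$-order is not automatic. (It could be salvaged via separability of the order, since both the generic and the specialised algebras are semisimple, but that is exactly the nontrivial input you have not supplied: you establish semisimplicity only of the Brauer algebras via Lemma \ref{lem:ss}, whereas part (ii) asserts — and any lifting argument requires — semisimplicity of $\BMW_{d+1}^-(\CK)$, which the paper deduces from the non-vanishing at $q=1$ of the Gram determinants of the cell modules.) The paper sidesteps the lifting problem entirely, which is precisely why part (i) is formulated for quasi-idempotents: it takes the Wedderburn central idempotent $\tilde\Phi_q$ of the semisimple $\CK$-algebra, clears denominators with $f(q)\in\CA_q$ so that $\Phi_q:=f(q)\tilde\Phi_q$ lies in the $\CA_q$-order and satisfies $\Phi_q^2=f(q)\Phi_q$, and then identifies $\lim_{q\to 1}\Phi_q$ as a non-zero multiple of $\Phi$ via the characterising relations $e_i\Phi_q=0$, $g_i\Phi_q=q\Phi_q$ (and, in the orthogonal case, by first replacing $E_\ell$ by the sum $\Psi=I_1+\cdots+I_t$ of the primitive central idempotents of the blocks it meets). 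You should either prove generic semisimplicity and justify the lifting via separability, or adopt the paper's quasi-idempotent construction.
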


\begin{proof}
It is clear from Lemma \ref{lem:cell} that $\rank_{\CA_q}\langle\Phi_q\rangle
\geq\dim_\C\langle\Phi\rangle$ (this follows also from the fact that
$\lim_{q\to 1}\left(\BMW_r^\ep(q)\Phi_q  \BMW_r^\ep(q)\right)=B_r(\ep m)\Phi B_r(\ep m)$),
and hence that
$\dim_\CK(\BMW_r^\ep(\CK)/\langle\Phi_q\rangle)\leq\dim_\C( B_r(\ep m)/\langle\Phi\rangle)$.
It follows that if we knew that $\Phi_q\in\Ker(\eta_q)$, then
$$
\begin{aligned}
\dim_\C( B_r(\ep m)/\langle\Phi\rangle)
&\geq\dim_\CK(\BMW_r^\ep(\CK)/\langle\Phi_q\rangle)\\
&\geq\dim_\CK(\BMW_r^\ep(\CK)/\Ker(\eta_q))
=\dim_\C( B_r(\ep m)/\langle\Phi\rangle),\\
\end{aligned}
$$
whence (i) follows. Hence we turn to the proof that $\Phi_q\in\Ker(\eta_q)$.

Let $M_q=V_q^{\ot r}$, and $M=V^{\ot r}=\lim_{q\to 1}M_q$. We wish to show that
$\Phi_qM_q=0$. Now $\lim_{q\to 1}\Phi_qM_q=c\Phi M=0$. It follows that
$\Phi_qM_q\subseteq (q-1)M_q$. We shall show that $\Phi_qM_q\subseteq (q-1)^iM_q$
for each integer $i$, which will show that $\Phi_qM_q=0$.

Assume that $\Phi_qM_q\subseteq (q-1)^iM_q$; then operating by $\Phi_q$,
we obtain $\Phi_q^2M_q=f(q)\Phi_qM_q\subseteq (q-1)^{i+1}M_q$. But
$f(q)$ is not divisible by $q-1$, since $\lim_{q\to 1}\Phi_q^2=c^2\Phi
=f(1)\Phi\neq 0$. Hence $\Phi_qM_q\subseteq (q-1)^{i+1}M_q$, and it follows
by induction that $\Phi_qM_q\subseteq (q-1)^iM_q$ for all $i$, completing the proof
of (i).

(ii) We are in now the symplectic case, and by Theorem \ref{thm:sp-main},
the idempotent $\Phi\in B_{d+1}(- m)$ which corresponds to the trivial representation
generates $\Ker (\eta)$. Since the Gram matrix $G(W(\lambda))$
of the cell module $W(\lambda)$ of $B_{d+1}(- m)$ is obtained from the
Gram matrix $G(W_q(\lambda))$ of the corresponding cell module of $\BMW_{d+1}^-(q)$
by taking $\lim_{q\to 1}$, it follows that since the former is non-singular for
each $\lambda$, so is the latter. Hence $\BMW_{d+1}^-(q)$ is semisimple.
Hence there is a central idempotent $\tilde\Phi_q\in\BMW_{d+1}^-(\CK)$ which corresponds
to the trivial representation. This is characterised by the property that
$e_i\tilde\Phi_q=\tilde\Phi_qe_i=0$ and $g_i\tilde\Phi_q=\tilde\Phi_qg_i=q\tilde\Phi_q$
for all $i$. Now there is an element $f(q)\in\CA_q$ such that $f(q)\tilde\Phi_q\in
\BMW_{d+1}^-(q)$ and $f(1)\neq 0$. Write $\Phi_q=f(q)\tilde\Phi_q$.
Using an argument by descent similar to that used above,
it is easily shown that $\lim_{q\to 1}\Phi_q\neq 0$, i.e. $\Phi_q\not\in(q-1)\BMW_{d+1}^-(q)$.

If we write $\sigma_i\in B_r(-m)$ for the transposition
$(i,i+1)$, then with a slight abuse of notation, we have $\lim_{q\to 1}(g_i)=\sigma_i$
and $\lim_{q\to 1}e_i=e_i$. Taking limits, the relations above show that
$\Phi_1:=\lim_{q\to 1}\Phi_q$
is central in $B_{d+1}(-m)$ and satisfies $e_i\Phi_1=\Phi_1e_i=0$ and
$\sigma_i\Phi_1=\Phi_1\sigma_i=\Phi_1$ for all $i$. It follows that $\Phi_1=c\Phi$,
for some non-zero scalar $c$, and hence by (i), that $\Phi_q$ generates $\Ker(\eta_q)$.

(iii) In the orthogonal case, it follows from Theorem \ref{thm:o-main} that $\Ker(\eta)$
is generated by an idempotent element $\Phi\in B_{m+1}(m)$, which may be taken to be
a scalar multiple of $E_\ell$. Now $B_{m+1}(m)$ is semisimple, and hence there are
primitive central idempotents $I_1,\dots,I_s\in B_{m+1}(m)$ such that $I_1+\dots+I_s=1$.
Hence $\Phi=\Phi I_1+\dots+\Phi I_s$. Suppose without loss of generality that
$\Phi I_j\neq 0$ if $j\leq t$, and $\Phi I_j= 0$ if $j> t$. Then the ideal
genrated by $\Phi$ is equal to that which is generated by $\Psi:=I_1+\dots +I_t$.
For clearly $\langle\Phi\rangle\subseteq\langle I_1+\dots + I_t\rangle$, but conversely
if $\Phi I_j\neq 0$, the two sided ideal generated by $\Phi I_j$ includes the simple ideal
generated by $I_j$, and hence $I_j$ itself. So
$\langle I_1+\dots+I_t\rangle\subseteq\langle I_1,\dots,I_t\rangle\subseteq\langle\Phi\rangle$.

We shall show that there is an element $\Psi_q\in\BMW_{m+1}^+(q)$
with properties analogous to those of $\Phi_q$ in (i), but for $\Psi$.
First observe that by the same agrgument
as in (ii) (using Lemma \ref{lem:cell}) the algebra $\BMW_{m+1}^+(q)$, and hence
$\BMW_{m+1}^+(\CK)$, whose cell modules have the same Gram matrices, are semisimple.
It follows that there are unique primitive central idempotents $\Phi_{1,q},\dots,\Phi_{t,q}
\in\BMW_{m+1}^+(\CK)$ which correspond to the same cells as $\Phi_1,\dots,\Phi_t$ respectively
(recall that $\Lambda$ parametrises the cells of both $\BMW_{m+1}^+(\CK)$ and $B_{m+1}(m)$,
and hence also their minimal two-sided ideals). For each $i$, there is an element $f_i(q)\in\CA_q$
such that $f_i(q)\Psi_{i,q}\in\BMW_{m+1}^+(q)$. Using the same argument as in (ii),
$f_j(q)$ may be chosen so that $\lim_{q\to 1}(f_j(q)\Psi_{j,q})=f_j(1)I_j\neq 0$.
Then ${\Psi}_q:=f_1(q)f_2(q)\dots f_t(q)(\Phi_{1,q}+\dots+\Phi_{t,q})\in \BMW_{m+1}^+(q)$,
and satisfies:(i) $\Psi_q^2=F(q)\Psi_q$, where $F(q)=f_1(q)\dots f_t(q)$ and
(ii) $\lim_{q\to 1}(\Psi_q)=F(1)\Psi$. It now follows from (i) that $\Psi_q$ generates
$\Ker(\eta_q)$.
\end{proof}

We remark finally that Hu and Xiao \cite{HX} have also contributed to the subject of this section.

\subsection{Further comments}\label{sect:quantum}

The invariant theory of quantum groups \cite{D, L} in a broad sense has been extensively studied.
One aspect of it is the quantum group theoretical construction \cite{R, RT, ZGB}
(see \cite{T2} for a review)
of the Jones polynomial of knots \cite{J} and its cousins.
It was in this context that the braided monoidal category
structure of the category of quantum group representations rose to prominence.

In the quantum case, the right replacement of the category of Brauer diagrams
is the category of (nondirected) ribbon graphs \cite{RT, T2},
also known as the category of framed tangles.
The Reshetikhin-Turaev functor \cite{RT} gives rise to a full tensor functor from this
category to the category of tensor representations of the symplectic quantum  group,
or the orthogonal quantum group defined in \cite{LZ1}.
This is the quantum analogue of Theorem \ref{thm:fft-sft}(1).

The FFT of invariant theory for quantum groups is best understood
in terms of endomorphism algebras (see e.g.,  \cite{DPS, LZ1}).
However, in order to establish a quantum analogue of FFT in the polynomial formulation,
one has to go beyond commutative algebra and consider quantum group actions on
noncommutative algebras. This was developed in \cite{LZZ}.

\end{document}